\documentclass[11pt, a4paper,leqno]{amsart}
\usepackage{amsmath,amsthm,amscd,amssymb,amsfonts, amsbsy}
\usepackage{latexsym}
\usepackage{txfonts}
\usepackage{exscale}
\usepackage{mathrsfs}
\usepackage{graphicx}
\usepackage{huncial}
\usepackage[T1]{fontenc}
\usepackage[titletoc]{appendix}
\usepackage{bbm}
\usepackage[all]{xy}

\def \R{ \mathbb{R} }
\def \N{ \mathbb{N} }

\allowdisplaybreaks

\usepackage[colorlinks,citecolor=red,pagebackref,hypertexnames=false, breaklinks]{hyperref}
\usepackage[usenames,dvipsnames]{color}

\def\supp{\mathop\mathrm{\,supp\,}}
\def\dist{\mathop\mathrm{\,dist\,}}
 
\def\mol{\mathop\mathrm{\,mol\,}} 

\parskip=4pt
\textwidth=1.2\textwidth
\textheight=1.08\textheight

\marginparwidth=65pt
\parskip=3pt

\calclayout
\allowdisplaybreaks

    \def\XXint#1#2#3{{\setbox0=\hbox{$#1{#2#3}{\int}$}
    \vcenter{\hbox{$#2#3$}}\kern-.5\wd0}}

\theoremstyle{plain}
\newtheorem{thm}{Theorem}
\newtheorem{lem}[thm]{Lemma}
\newtheorem{cor}[thm]{Corollary}
\newtheorem{prop}[thm]{Proposition}

\theoremstyle{definition}
\newtheorem{defn}[thm]{Definition}

\newtheorem{ex}{Example}

\theoremstyle{remark}
\newtheorem{rem}[thm]{Remark}

 \newcommand{\abs}[1]{{\left\lvert{#1}\right\rvert}}
\newcommand{\norm}[1]{{\left\lVert{#1}\right\rVert}}
\newcommand{\br}[1]{{\left({#1}\right)}}

\numberwithin{equation}{section}
\numberwithin{thm}{section}

\author{Li Chen}
\address{Li Chen, Mathematical Sciences Institute, The Australian National University, Canberra ACT 0200, Australia}
\curraddr{Instituto de Ciencias Matem\'aticas CSIC-UAM-UC3M-UCM,
Consejo Superior de Investigaciones Cient{\'\i}ficas,
C/ Nicol\'as Cabrera, 13-15,
E-28049 Madrid, Spain}
\email{li.chen@icmat.es}

\begin{document}
\title{Hardy spaces on metric measure spaces with generalized sub-Gaussian heat kernel estimates}
\subjclass[2010]{42B35(primary); 35J05, 58J35, 42B20 (secondary)}
\keywords{Hardy spaces, metric measure spaces, heat kernel estimates, Riesz transforms.}

\date{\today}

\begin{abstract} 
Hardy space theory has been studied on manifolds or metric measure spaces equipped with either Gaussian or sub-Gaussian heat kernel behaviour. However, there are natural examples where one finds a mix of both behaviour (locally Gaussian and at infinity sub-Gaussian) in which case the previous theory doesn't apply. Still we define molecular and square function Hardy spaces using appropriate scaling, and we show that they agree with Lebesgue spaces in some range. Besides, counterexamples are given in this setting that the $H^p$ space corresponding to Gaussian estimates may not coincide with $L^p$. As a motivation for this theory, we show that the Riesz transform maps our Hardy space $H^1$ into $L^1$.
\end{abstract}

\maketitle

\tableofcontents

\section{Introduction}

The study of Hardy spaces originated in the 1910's and at the very beginning was confined to Fourier series and complex 
analysis in one variable. Since 1960's, it has been transferred to real analysis in several variables, or more generally to 
analysis on metric measure spaces. There are many different equivalent definitions of Hardy spaces, which involve suitable maximal functions, the atomic decomposition, the molecular decomposition, singular integrals, square functions etc.
See, for instance, the classical references \cite{FS72, CW77,CMS85,St93}. 

More recently, a lot of work has been devoted to the theory of Hardy spaces associated with operators, see for example, 
\cite{AMR08,HLMMY11,U11,AMM13} and the references therein.

In \cite{AMR08}, Auscher, McIntosh and Russ studied Hardy spaces with respect to the Hodge Laplacian on Riemannian 
manifolds with the doubling volume property by using the Davies-Gaffney type estimates. They defined Hardy spaces of differential forms of all degrees via molecules 
and square functions, 
on which the Riesz transform is $H^p$ bounded for $1\le p\le \infty$. Comparing with the Lebesgue spaces, it holds that $H^p\subset L^p$ for $1\leq p\leq 2$ and $L^p \subset H^p$ for $p>2$. Moreover, under the assumption of Gaussian heat kernel upper bound, $H^p$ coincides $L^p$ for $1<p<\infty$.

In \cite{HLMMY11}, Hofmann, Lu, Mitrea, Mitrea and Yan further developed the theory of $H^1$ and $BMO$ spaces adapted to a metric measure space $(M,d,\mu)$ with the volume doubling property endowed with a non-negative self-adjoint operator $L$, which generates an analytic semigroup $\{e^{-tL}\}_{t>0}$ 
satisfying the so-called Davies-Gaffney estimate: there exist $C,c>0$ such that for any open sets $U_1,U_2\subset M$,
and for every $f_i \in L^2(M)$ with $\supp f_i \subset U_i$, $i=1,2$,
\begin{align}\label{DG-normal}
|<e^{-tL}f_1,f_2>| \leq C \exp\left(-\frac{\dist^2(U_1,U_2)}{ct}\right) \Vert f_1\Vert_{2} \Vert f_2\Vert_2,~\forall t>0,
\end{align}
 where $\dist(U_1,U_2):= \inf_{x\in U_1,y\in U_2}d(x,y)$. 
The authors extended results of \cite{AMR08} by obtaining an atomic decomposition of the $H^1$ space.

More generally, instead of \eqref{DG-normal}, if $M$ satisfies the Davies-Gaffney estimate of order $m$ with $m\geq 2$: for all $x,y \in M$ and for all $t>0$,
\begin{align}\label{DGm}
\norm{\mathbbm 1_{B(x,t^{1/m})} e^{-tL} \mathbbm 1_{B(y,t^{1/m})}}_{2\to2}
\leq C\exp\br{-c\br{\frac{d(x,y)}{t}}^{\frac{m}{m-1}}}.
\end{align}
Kunstmann and Uhl \cite{U11,KU15} defined Hardy spaces via square functions and via molecules adapted to \eqref{DGm}, where
the two $H^1$ spaces are also equivalent. 
 Here and in the sequel, $B(x,r)$ denotes the ball of centre $x\in M$ and radius $r>0$ and $V(x,r)=\mu(B(x,r))$. In addition, if the $L^{p_0}-L^{p'_0}$ off-diagonal estimates of order $m$ holds: for all $x,y \in M$ and for all $t>0$,
\begin{align}\label{DGp}
\norm{\mathbbm 1_{B(x,t^{1/m})} e^{-tL} \mathbbm 1_{B(y,t^{1/m})}}_{p_0 \to p'_0} 
\leq \frac{C}{V^{\frac{1}{p_0}-\frac{1}{p'_0}}(x,t^{1/m})} \exp\br{-c\br{\frac{d(x,y)}{t}}^{\frac{m}{m-1}}}
\end{align}
with $p_0'$ the conjugate of $p_0$, then the Hardy space $H^p$ defined via 
square functions coincides with $L^p$ for $p\in (p_0,2)$.

However, there are natural examples where, one finds a mix of both behaviours \eqref{DG-normal} and \eqref{DGm}, in which case the previous Hardy space theory doesn't apply. For example, on fractal manifolds, the heat kernel behaviour is locally Gaussian and at infinity  sub-Gaussian (see Section \ref{HK estimates} for more details). We aim to develop a proper Hardy space theory for this setting. An important motivation for our Hardy spaces theory is to study the Riesz transform on fractal manifolds, where the weak type $(1,1)$ boundedness has recently been proved in a joint work by the author with Coulhon, Feneuil and Russ \cite{CCFR15}.

In this paper, we work on doubling metric measure spaces endowed with a non-negative self-adjoint operator which satisfy the doubling volume property and  the $L^2$ off-diagonal estimate with different local and global decay (see \eqref{DG} below). The specific description will be found below in Section \ref{setting}. We define two classes of Hardy spaces in this setting, via molecules and via conical square functions, see Setion \ref{definitions}. Both definitions have the scaling adapted to the off-diagonal decay \eqref{DG}. 

In Section 3, we identify the two different $H^1$ spaces. The molecular $H^1$ spaces are always convenient spaces to deal with Riesz transform and other sub-linear operators, while the $H^p$, $p \ge1$, spaces defined via conical square functions possess certain good properties like real and complex interpolation. The identification of both spaces gives us a powerful tool to study the Riesz transform, Littlewood-Paley functions, boundary value problems for elliptic operators etc. 

In Section 4, we compare the Hardy spaces defined via conical square functions with the Lebesgue spaces. Assuming further an $L^{p_0}-L^{p_0'}$ off-diagonal estimate for some $1\le p_0<2$ with different local and global decay for the heat semigroup, we show the equivalence of our $H^p$ spaces and the Lebesgue spaces $L^p$ for $p_0< p<p_0'$. We also justify that the scaling for the Hardy spaces is the right one, by disproving this equivalence of $H^p$ and $L^p$ for $p$ close to $2$ on some fractal Riemannian manifolds. As far as we know, no previous results are known in this direction.

In Section 5, we shall apply our theory to prove that the Riesz transform is $H^1-L^1$ bounded on fractal manifolds. The proof is inspired by \cite{CCFR15} (see \cite{Fe15}for the original proof in the discrete setting), where the integrated estimate for the gradient of the heat kernel plays a crucial role.

In the following, we will introduce our setting, the definitions and the main results more specifically.

\medskip

\noindent{\bf Notation} Throughout this paper, we denote $u\simeq v$ if $v\lesssim u$ and $u\lesssim v$, where $u\lesssim v$ means that there exists a constant $C$ (independent of the important parameters) such that $u\leq Cv$.

For a ball $B\subset M$ with radius $r>0$ and given $\alpha>0$, we
write $\alpha B$ as the ball with the same centre and the radius $\alpha r$.
We denote $C_1(B)=4B$, and $C_j(B)=2^{j+1} B\backslash 2^{j}B$ for $j\geq 2$.

\subsection{The setting}\label{setting}
We shall assume that $M$ is a metric measure space satisfying the doubling volume property: for any $x\in M$ and $r>0,
$\begin{align}
\label{doubling}V(x,2r)\lesssim V(x,r)\tag{$D$}
\end{align}
and the $L^2$ Davies-Gaffney estimate with different local and global decay for the analytic semigroup $\{e^{-tL}\}_{t>0}$ generated by the non-negative self-adjoint operator $L$, that is,
$\forall x,y\in M$,
\begin{align}\label{DG}\tag{$DG_{\rho}$}
\norm{\mathbbm 1_{B(x,t)} e^{-\rho(t)L} \mathbbm 1_{B(y,t)}}_{2\to 2} 
\lesssim \left\{
\begin{aligned}
& \exp\br{-c\br{\frac{d(x,y)}{t}}^{\frac{\beta_1}{\beta_1-1}}} & 0<t<1, \\
& \exp\br{-c\br{\frac{d(x,y)}{t}}^{\frac{\beta_2}{\beta_2-1}}}, & t\geq 1,
\end{aligned}\right.
\end{align}
where $1<\beta_1\leq\beta_2$ and
\begin{align}\label{rho}
\rho(t)=\left\{ \begin{aligned}
         &t^{\beta_1},&0<t<1, \\
         & t^{\beta_2},&t\geq 1.
         \end{aligned}\right.
         \end{align}

Recall a simple consequence of \eqref{doubling}: there exists $\nu>0$ such that 
 \begin{align} \label{D1}
\frac{V(x,r)}{V(x,s)}\lesssim \br{\frac{r}{s}}^\nu,\,\,\forall x\in M, \,r\geq s>0.\end{align}
It follows that
\[
V(x,r)\lesssim \br{1+\frac{d(x,y)}{r}}^{\nu} V(y,r),\,\,\forall x\in M, \,r\geq s>0.
\]
Therefore,
\begin{align}\label{D2}
\int_{d(x,y)<r}\frac{1}{V(x,r)}d\mu (x)\simeq  1,\,\,\forall y\in M, \,r>0.
\end{align}
If $M$ is non-compact, we also have a reverse inequality of \eqref{D1} (see for instance \cite[p. 412]{Gr09}). That is, there exists $\nu'>0$ such that
\begin{align}\label{revdb}
\frac{V(x,r)}{V(x,s)}\gtrsim \left(\frac{r}{s}\right)^{\nu'},
\,\,\forall x\in M, \,r\geq s>0.
\end{align}
 
Also notice that in \eqref{rho}, if necessary we may smoothen $\rho(t)$ as 
\[
\rho(t)=\left\{ \begin{aligned}
       &t^{\beta_1},&\text{if } 0<t\leq 1/2, \\
	&\text{smooth part},&\text{if } 1/2<t<2, \\
       &t^{\beta_2},&\text{if } t\geq 2;
       \end{aligned}\right.
\]
with $\rho'(t)\simeq 1$ for $1/2<t<2$, which we still denote by $\rho(t)$. Since $\frac{\rho'(t)}{\rho(t)} =\frac{\beta_1}{t}$ for 
$0<t\leq 1/2$ and $\frac{\rho'(t)}{\rho(t)} =\frac{\beta_2}{t}$ for $t\geq 2$, we have 
in a uniform way
\begin{align}\label{der}
\frac{\rho'(t)}{\rho(t)} \simeq \frac{1}{t}.
\end{align}

We say that $M$ satisfies an $L^{p_0}-L^{p'_0}$ off-diagonal estimate for some $1<p_0<2$ if
\begin{align}\label{DG'}\tag{$DG_{\rho}^{p_0}$}
\norm{\mathbbm 1_{B(x,t)} e^{-\rho(t)L} \mathbbm 1_{B(y,t)}}_{p_0\to p_0'} 
\lesssim \left\{
\begin{aligned}
&\frac{1}{V^{\frac{1}{p_0}-\frac{1}{p_0'}}(x,t)} \exp\br{-c\br{\frac{d(x,y)}{t}}^{\frac{\beta_1}{\beta_1-1}}} & 0<t<1, \\
&\frac{1}{V^{\frac{1}{p_0}-\frac{1}{p_0'}}(x,t)} \exp\br{-c\br{\frac{d(x,y)}{t}}^{\frac{\beta_2}{\beta_2-1}}}, & t\geq 1,
\end{aligned}\right.
\end{align}
and a generalized pointwise sub-Gaussian heat kernel estimate if for all $x,y\in M$,
\begin{align}\label{ue}\tag{$U\!E_{\rho}$}
p_{\rho(t)}(x,y)
\lesssim \left\{
\begin{aligned}
&\frac{1}{V(x,t)} \exp\br{-c\br{\frac{d(x,y)}{t}}^{\frac{\beta_1}{\beta_1-1}}} & 0<t<1, \\
&\frac{1}{V(x,t)} \exp\br{-c\br{\frac{d(x,y)}{t}}^{\frac{\beta_2}{\beta_2-1}}}, & t\geq 1,
\end{aligned}\right.
\end{align}
Examples of fractal manifolds satisfy \eqref{ue} with $\beta_1=2$ and $\beta_2>2$, see Section 2 below for more information.


\subsection{Definitions}\label{definitions}        
Recall that 
\begin{defn} \label{mol}
Let $\varepsilon >0$ and an integer $K$ be an integer such that $K>\frac{\nu}{2\beta_1}$, where $\nu$ is in \eqref{D1}. A function $a\in L^2(M)$ is called a $(1,2,\varepsilon )-$molecule associated to $L$ 
if there exist a function $b \in \mathcal{D}(L)$ and a ball $B$ with radius $r_B$ such that
\begin{enumerate}
\item $a = L^K b$;
\item It holds that for every $k=0,1, \cdots, K$ and $i=0,1,2,\cdots$, we have
 \begin{align}\label{molb}
\Vert(\rho (r_{B})L )^k b\Vert_{L^2(C_i(B))} \leq \rho^K(r_{B})2^{-i\varepsilon } V(2^i B)^{-1/2}.
\end{align}
\end{enumerate}
\end{defn}

\begin{defn}\label{molh}
We say that $f=\sum_{n=0}^{\infty }\lambda _n a_n$ is a molecular $(1,2,\varepsilon )-$representation of $f$ if 
$(\lambda_n)_{n\in \mathbb{N}}\in l^1$, each $a_n$ is a molecule as above, and the sum converges in the $L^2$ sense. 
We denote the collection of all the functions with a molecular representation by $\mathbb{H}_{L,\rho,\mol}^1$, 
where the norm of $f\in \mathbb{H}_{L,\rho,\mol}^1$ is given by
\[
\Vert f\Vert_{\mathbb{H}_{L,\rho,\mol}^1(M)}=\inf \left \{ \sum_{n=0}^{\infty }|\lambda _n|:
f=\sum_{n=0}^{\infty }\lambda _n a_n \text{ is a molecular } (1,2,\varepsilon )-\text{representation} \right\}.
\]
The Hardy space $H_{L,\rho,\mol}^1(M)$ is defined as the completion of $\mathbb{H}_{L,\rho,mol}^1(M)$ with respect to this 
norm.
\end{defn}

Consider the following conical square function
\begin{align}\label{SFrho}
S_h^{\rho} f(x) =\br{\iint_{\Gamma (x)}|\rho(t)L e^{-\rho(t)L }f(y)|^2\frac{d\mu(y)}{V(x,t)}\frac{dt}{t}}^{1/2},
\end{align}
where the cone $\Gamma(x)=\{(y,t)\in M\times (0,\infty ): d(y,x)<t\}$.

We define first the $L^2(M)$ adapted Hardy space $H^2(M)$ as the closure of the range of $L $ in $L^2(M)$ norm, i.e., $H^2(M):=
\overline{R(L )}$. 

\begin{defn}
The Hardy space $H_{L,S_h^{\rho}}^p(M)$, $p\geq 1$ is defined as the completion of the set
$\{f\in H^2(M): \Vert S_h^{\rho} f\Vert_{L^p}<\infty \}$ with respect to the norm $\Vert S_h^{\rho} f\Vert_{L^p}$.
The $H_{L,S_h^{\rho}}^p(M)$ norm is defined by
$\Vert f\Vert_{H_{L,S_h^{\rho}}^p(M)}:=\Vert S_h^{\rho} f\Vert_{L^p(M)}$.
\end{defn}
For $p=2$, the operator $S_h^{\rho}$ is bounded on $L^2(M)$. Indeed, for every $f\in L^2(M)$,
\begin{align}\label{L2}
\begin{split}
\Vert S_h^{\rho} f\Vert_{L^2(M)}^2
&= \int_M\iint_{\Gamma (x)} \abs{\rho(t)L e^{-\rho(t)L }f(y)}^2 \frac{d\mu(y)}{V(x,t)}\frac{dt}{t}d\mu (x)
\\ &\simeq \iint_{M\times (0,\infty)} \abs{\rho(t)L e^{-\rho(t)L }f(y)}^2 d\mu(y)\frac{dt}{t}
\\ & \simeq \iint_{M\times (0,\infty)} \abs{\rho(t)L e^{-\rho(t)L }f(y)}^2 d\mu(y)\frac{\rho'(t)dt}{\rho(t)}
\\ & = \int_0^{\infty} <(\rho(t) L)^{2} e^{-2\rho(t)L }f,f> \frac{\rho'(t)dt}{\rho(t)}
\simeq \Vert f\Vert_{L^2(M)}^2.
\end{split}
\end{align}
Note that the second step follows from Fubini theorem and \eqref{D2} in Section 2.3. The third step is obtained by using the fact \eqref{der}: $ \rho'(t)/\rho(t) \simeq 1/t$. The last one is a consequence of spectral theory. 

\begin{rem}\label{rem:SF}
The above definitions are similar as in \cite{HLMMY11} (also \cite{AMR08} for $1$-forms on Riemannian manifolds) and \cite{KU15,U11}. The difference is that we replace $t^2$ or $t^m$ by $\rho(t)$ in \eqref{molb} and \eqref{SFrho}. 

In the case when $\rho(t)=t^2$, we denote $S_h^\rho$ by $S_h$, that is,
\begin{align}\label{SF-normal}
S_h f(x) :=\left(\iint_{\Gamma (x)}|t^2 L e^{-t^2 L }f(y)|^2\frac{d\mu(y)}{V(x,t)}\frac{dt}{t}\right)^{1/2}, 
\end{align}
and denote $H_{L,S_h^\rho}^p$ by  $H_{L,S_h}^p$.

\end{rem}

\subsection{Main results}

We first obtain the equivalence between $H^1$ spaces defined via molecules and via square functions.
 \begin{thm}\label{H1equiv}
Let $M$ be a metric measure space satisfying the doubling volume property \eqref{doubling} and the $L^2$
off-diagonal heat kernel estimate \eqref{DG}.
Then $H_{L,\rho,\mol}^1(M)=H_{L,S_h^{\rho}}^1(M)$, which we denote by $H_{L,\rho}^1(M)$. Moreover,
$$\Vert f\Vert_{H_{L,\rho,\mol}^1(M)} \simeq \Vert f\Vert_{H_{L,S_h^{\rho}}^1(M)}.$$
\end{thm}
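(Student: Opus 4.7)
The plan is to prove both inclusions separately, following the blueprint of \cite{HLMMY11} and \cite{KU15}, but with every dyadic scale measured through $\rho$ instead of $t^2$ or $t^m$.

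\textbf{Direction $H_{L,\rho,\mol}^1 \subseteq H_{L,S_h^\rho}^1$.} By linearity, it suffices to prove $\|S_h^\rho a\|_{L^1(M)} \lesssim 1$ uniformly for every $(1,2,\varepsilon)$-molecule $a = L^K b$ associated with a ball $B$ of radius $r_B$. I would decompose $\|S_h^\rho a\|_{L^1(M)} = \sum_{i \geq 1}\|S_h^\rho a\|_{L^1(C_i(B))}$. For $i=1$, I combine the $L^2$-boundedness recorded in \eqref{L2} with Cauchy--Schwarz and the molecular bound \eqref{molb}. For $i \geq 2$, Cauchy--Schwarz gives
\[
\|S_h^\rho a\|_{L^1(C_i(B))} \lesssim V(2^{i+1}B)^{1/2}\,\|S_h^\rho a\|_{L^2(C_i(B))},
\]
and I split the cone integral at $\rho(t) \simeq \rho(2^i r_B)$. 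In the short-time regime I apply \eqref{DG} to each localized piece $b_j := b\,\mathbbm{1}_{C_j(B)}$ and sum the resulting exponential decays in $|i-j|$. In the long-time regime I exploit the factor $L^K$ by rewriting $\rho(t)Le^{-\rho(t)L}a = \rho(t)L^{K+1}e^{-\rho(t)L}b$, which gains the factor $(\rho(r_B)/\rho(t))^K$; combined with $K > \nu/(2\beta_1)$ and \eqref{D1}, this overcomes $V(2^{i+1}B)^{1/2}$ and produces a summable $2^{-i\varepsilon}$ decay.

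\textbf{Direction $H_{L,S_h^\rho}^1 \subseteq H_{L,\rho,\mol}^1$.} Here I would use the tent-space machinery of Coifman--Meyer--Stein. For $f \in H^2(M)$ with $S_h^\rho f \in L^1$, the function $F(y,t) := \rho(t)Le^{-\rho(t)L}f(y)$ lies in the tent space $T^{1,2}(M)$ with $\|F\|_{T^{1,2}} \simeq \|S_h^\rho f\|_{L^1}$, so the classical atomic decomposition gives $F = \sum_n \lambda_n A_n$ where $A_n$ is a $T^{1,2}$-atom supported in a tent $\widehat{B_n}$. Using \eqref{der} the measure $\rho'(t)\,dt/\rho(t)$ is comparable to $dt/t$, so after the substitution $s = \rho(t)$ spectral calculus on $\overline{R(L)}$ yields a Calder\'on-type reproducing identity
\[
f = c_K \int_0^\infty \big(\rho(t)L\big)^{K+1} e^{-2\rho(t)L}f\,\frac{\rho'(t)}{\rho(t)}\,dt = c_K\,\pi_{\rho,K}(F),
\]
where $\pi_{\rho,K}(G)(x) := \int_0^\infty (\rho(t)L)^{K} e^{-\rho(t)L}\big(G(\cdot,t)\big)(x)\,\frac{\rho'(t)}{\rho(t)}\,dt$. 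The $L^2$ off-diagonal estimates \eqref{DG} then let me verify that $\pi_{\rho,K}(A_n)$ is a fixed constant multiple of a $(1,2,\varepsilon)$-molecule associated with $B_n$: the tent support of $A_n$ truncates the $t$-integration at $t \leq r_{B_n}$, and applying $(\rho(t)L)^{K} e^{-\rho(t)L}$ transfers the tent localization into the required $L^2$ bounds on the annuli $C_i(B_n)$, with exponential gain in $i$. This delivers $f = c_K \sum_n \lambda_n \pi_{\rho,K}(A_n)$ as a molecular representation with $\sum_n |\lambda_n| \lesssim \|f\|_{H_{L,S_h^\rho}^1}$.

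\textbf{Main obstacle.} The central technical difficulty is the inhomogeneous profile of $\rho$ across $t = 1$: the off-diagonal kernel has two different decay exponents, $\beta_1/(\beta_1-1)$ locally and $\beta_2/(\beta_2-1)$ globally, so each $t$-integral must be split into the zones $t \leq 1$ and $t \geq 1$ — and, in the molecular direction, also at the intermediate scale $t \simeq 2^i r_B$ — with the correct exponent used on each piece. The uniform relation $\rho'(t)/\rho(t) \simeq 1/t$ from \eqref{der} is what keeps the change of variables $s = \rho(t)$ compatible with spectral calculus across both regimes, and the condition $K > \nu/(2\beta_1)$ supplies the cancellation needed at short scales to absorb the doubling growth. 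Once this bookkeeping is organized, the classical schemes of \cite{HLMMY11,KU15} go through, and both constants in $\|f\|_{H_{L,\rho,\mol}^1} \simeq \|f\|_{H_{L,S_h^\rho}^1}$ can be extracted from the two directions above.
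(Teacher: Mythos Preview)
Your proposal is correct and follows the same blueprint as the paper (itself modeled on \cite{HLMMY11,AMR08,KU15}), with one organizational difference and two small slips worth fixing. In the direction $H^1_{L,\rho,\mol}\subseteq H^1_{L,S_h^\rho}$ you decompose the base space into annuli $C_i(B)$ in the $x$-variable and then split the cone integral in $t$; the paper instead partitions the upper half-space $M\times(0,\infty)$ into four space--time regions $A_0,\dots,A_3$ (as in \cite{AMR08}) and estimates each piece in $T_2^1$. Both routes lead to the same localized $L^2$ estimates and the same use of the off-diagonal bound versus the $L^K$ cancellation; your version is slightly more direct, while the tent-space slicing in the paper makes the support bookkeeping of $\mathcal A$ more transparent. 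Two corrections: in the short-time regime you must localize $a$, not $b$ --- write $a_j:=a\,\mathbbm 1_{C_j(B)}$, since $L^K$ does not commute with cut-offs and the factor $\rho(t)^{-K}$ would otherwise blow up as $t\to 0$; and the condition $K>\nu/(2\beta_1)$ is what makes the \emph{long-time} piece summable (it forces $\rho(r_B)^K/\rho(t)^K$ to beat the doubling growth $2^{i\nu/2}$), not the short-time one, where the exponential off-diagonal decay already does the job. In the reverse direction your argument via the tent-space atomic decomposition and the reproducing operator $\pi_{\rho,K}$ is exactly what the paper does.
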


Now compare $H_{L,S_h^{\rho}}^p(M)$ and $L^p$ for $1< p < \infty$.

Recall that on Riemannian manifold satisfying the doubling volume property \eqref{doubling} and the Gaussian upper bound for the heat kernel of the operator, we have 
$H_{L,S_h}^p(M)=L^p(M)$, $1< p <\infty$, see for example \cite[Theorem 8.5]{AMR08} for Hardy spaces of $0-$forms on 
Riemannian manifold. However, in general, the equivalence is not known.
It is also proved in \cite{KU15,U11} that if the $L^{p_0}-L^{p'_0}$ off-diagonal estimates of order $m$ \eqref{DGp} holds, 
then the Hardy space $H_{S_h^m}^p$ (see Remark \ref{rem:SF}) coincides with $L^p$ for $p\in (p_0,2)$.

Our result in this direction is the following:
\begin{thm}\label{equihl}
Let $M$ be a non-compact metric measure space as above. Let $1\leq p_0<2$ and $\rho$ be as above. Suppose that $M$ 
satisfies \eqref{doubling} and \eqref{DG'}. 
Then $H_{L,S_h^{\rho}}^p(M)=L^p(M)$ for $p_0<p<p_0'$.
\end{thm}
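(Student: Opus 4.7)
The plan is to establish the two inequalities $\|S_h^\rho f\|_{L^p} \lesssim \|f\|_{L^p}$ and $\|f\|_{L^p} \lesssim \|S_h^\rho f\|_{L^p}$ on the dense subspace $L^p \cap H^2(M)$ and then pass to the completion. The basic tools are the off-diagonal hypothesis \eqref{DG'} combined with a semigroup-adapted Calderón-Zygmund decomposition, and a Calderón-type reproducing formula paired with tent-space duality, in the spirit of \cite{KU15,U11}.

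For the forward square function bound, the $p=2$ case is \eqref{L2}. For $p_0<p<2$, I would argue by Marcinkiewicz interpolation between the $L^2$ bound and a weak-$(p_0,p_0)$ estimate for $S_h^\rho$. The weak estimate follows from a Calderón-Zygmund decomposition $f=g+\sum_i b_i$ at level $\lambda$ adapted to $L$, where $b_i=(I-e^{-\rho(r_i)L})^K f_i$ is localized on a ball $B_i$ of radius $r_i$ and $K$ is chosen large. The good part $g\in L^2$ is controlled by \eqref{L2}. For each bad piece, one expands $(I-e^{-\rho(r_i)L})^K=\sum_{k=0}^K \binom{K}{k}(-1)^k e^{-k\rho(r_i)L}$ and decomposes the tent integral over the annuli $C_j(B_i)$; applying \eqref{DG'} to the operators $\rho(t)L e^{-\rho(t)L}\cdot e^{-k\rho(r_i)L}$ then produces an exponential decay factor of type $\exp(-c(2^j r_i/s)^{\beta/(\beta-1)})$ with $\beta\in\{\beta_1,\beta_2\}$ according to the scale regime, which beats the doubling growth after summing in $j$. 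For $2\le p<p_0'$ the bound is obtained by the duality argument explained next.

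For the reverse estimate I would use the Calderón reproducing formula
\[
f=c\int_0^\infty (\rho(t)L)^2 e^{-2\rho(t)L}f\,\frac{\rho'(t)\,dt}{\rho(t)},\qquad f\in H^2(M),
\]
which is valid on $\overline{R(L)}$ by spectral theory. Pairing with $g\in L^{p'}\cap H^2(M)$, using self-adjointness of $L$ and \eqref{der}, one obtains
\[
\langle f,g\rangle \simeq \iint_{M\times(0,\infty)} \rho(t)L e^{-\rho(t)L}f(y)\cdot \rho(t)L e^{-\rho(t)L}g(y)\,d\mu(y)\,\frac{dt}{t}.
\]
The right-hand side is a tent-space pairing to which standard $T^p$--$T^{p'}$ duality applies, yielding $|\langle f,g\rangle|\lesssim \|S_h^\rho f\|_{L^p}\|S_h^\rho g\|_{L^{p'}}$. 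Since $p_0<p'<p_0'$, the forward bound applied to $g$ gives $\|S_h^\rho g\|_{L^{p'}}\lesssim \|g\|_{L^{p'}}$, and taking the supremum over $\|g\|_{L^{p'}}\le 1$ yields $\|f\|_{L^p}\lesssim \|S_h^\rho f\|_{L^p}$; the same argument simultaneously closes the forward inequality for $p\ge 2$.

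The main obstacle is the Calderón-Zygmund step. Unlike the $m$-homogeneous setting of \cite{KU15,U11}, the scaling function $\rho$ has two different regimes, so the stopping-time procedure must cope with both $\rho(r)=r^{\beta_1}$ and $\rho(r)=r^{\beta_2}$ according to whether the selected radii $r_i$ and the running scale $t$ are smaller or larger than $1$. The off-diagonal estimate \eqref{DG'} also switches exponent at $t=1$, and one needs to verify that composing $e^{-k\rho(r_i)L}$ with operators at scale $\rho(t)$ still produces summable exponential factors in every combination of the relevant regimes. Careful bookkeeping of these cases, using the monotonicity of $\rho$ and the reverse doubling \eqref{revdb} to control volume ratios across scales, is what makes the argument genuinely more delicate than in the single-exponent case.
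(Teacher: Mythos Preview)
Your overall strategy---weak $(p_0,p_0)$ for $S_h^\rho$ via Calder\'on--Zygmund plus interpolation, then duality through the reproducing formula---matches the paper. But the treatment of the bad pieces has a genuine gap. You propose to expand $(I-e^{-\rho(r_i)L})^K=\sum_{k=0}^K(-1)^k\binom{K}{k}e^{-k\rho(r_i)L}$ and apply \eqref{DG'} term by term to $\rho(t)Le^{-\rho(t)L}\cdot e^{-k\rho(r_i)L}$. This destroys the cancellation that makes the composite small for $t\gg r_i$: the $k=0$ term is just $\rho(t)Le^{-\rho(t)L}f_i$, and its off-diagonal decay $\exp(-c(2^jr_i/t)^{\beta/(\beta-1)})$ is useless once $t>2^jr_i$. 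Integrating the square function over large times then gives no summable gain in $j$, so the sum over annuli does not close. (Relatedly, your $b_i=(I-e^{-\rho(r_i)L})^Kf_i$ is \emph{not} localized on $B_i$, contrary to what you write; the semigroup spreads support.)

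The paper does not expand. It takes the \emph{classical} Calder\'on--Zygmund decomposition, splits the bad part via $I=\bigl(I-(I-e^{-\rho(r_i)L})^N\bigr)+(I-e^{-\rho(r_i)L})^N$, handles the first piece by the $L^2$ boundedness of $S_h^\rho$ together with the $L^{p_0}\to L^2$ off-diagonal bound for $e^{-k\rho(r_i)L}$, and for the second piece keeps the full symbol $H_{t,r}(\zeta)=\rho(t)\zeta e^{-\rho(t)\zeta}(1-e^{-\rho(r)\zeta})^N$ intact and estimates it by holomorphic functional calculus: one writes $H_{t,r}(L)=\int_{\Gamma_\pm}e^{-zL}\eta_\pm(z)\,dz$ with $|\eta_\pm(z)|\lesssim\rho(t)\rho^N(r)/(|z|+\rho(t))^{N+2}$, feeds in the off-diagonal estimate for $e^{-zL}$, and integrates. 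This is precisely what produces the factor $(\rho(r_i)/\rho(t))^N$ at large $t$ that your term-by-term expansion throws away; in the two-regime setting the resulting integrals require a further case split (the quantities $H_1$ and $H_2$ in the paper) to reach the uniform bound $2^{-2\beta_1Nj}$. That functional-calculus step, not the regime bookkeeping you flag, is the heart of the argument and is absent from your sketch.
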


If one assumes the pointwise heat kernel estimate, then Theorems 1.5 and 1.6 yield the following.
\begin{cor}
Let $M$ be a non-compact metric measure space satisfying the doubling volume property \eqref{doubling} and the pointwise heat kernel estimate \eqref{ue}.
Then $H_{L,\rho,\mol}^1(M)=H_{L,S_h^{\rho}}^1(M)$, and $H_{L,S_h^{\rho}}^p(M)=L^p(M)$ for $1<p<\infty$.
\end{cor}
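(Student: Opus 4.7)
The plan is to deduce both claims from Theorems \ref{H1equiv} and \ref{equihl}, whose respective hypotheses are \eqref{DG} and \eqref{DG'}. So I would first check that the pointwise bound \eqref{ue} implies \eqref{DG} together with \eqref{DG'} for every $1\le p_0<2$. Granting this, the first claim follows directly from Theorem \ref{H1equiv}, and the second follows by applying Theorem \ref{equihl} with $p_0=1$ (so $p_0'=\infty$), which covers the full range $1<p<\infty$ in one shot; alternatively one lets $p_0$ range over $[1,2)$ and patches the intervals $(p_0,p_0')$.

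To extract \eqref{DG'} from \eqref{ue}, I would fix $x,y\in M$ and $t>0$, let $f$ be supported in $B(y,t)$, and for $u\in B(x,t)$ write
\[
e^{-\rho(t)L}f(u)=\int_{B(y,t)}p_{\rho(t)}(u,w)\,f(w)\,d\mu(w).
\]
When $d(x,y)\le 4t$, doubling gives $V(u,t)\simeq V(x,t)\simeq V(y,t)$ and the target bound reduces, via H\"older's inequality $\int_{B(y,t)}|f|\,d\mu\le V(y,t)^{1/p_0'}\Vert f\Vert_{p_0}$ and \eqref{ue}, to an essentially trivial estimate (with constant exponential factor). When $d(x,y)>4t$, one has $d(u,w)\ge d(x,y)/2$ for all $u\in B(x,t)$ and $w\in B(y,t)$, so \eqref{ue} yields
\[
p_{\rho(t)}(u,w)\lesssim\frac{1}{V(x,t)}\exp\!\Bigl(-c'\bigl(d(x,y)/t\bigr)^{\beta_i/(\beta_i-1)}\Bigr),
\]
with $\beta_i=\beta_1$ if $t<1$ and $\beta_i=\beta_2$ if $t\ge 1$. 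Applying H\"older as above and then taking the $L^{p_0'}$-norm over $B(x,t)$ produces the prefactor $V(x,t)^{1/p_0'-1/p_0}\bigl(V(y,t)/V(x,t)\bigr)^{1/p_0'}$; the ratio $V(y,t)/V(x,t)\lesssim(1+d(x,y)/t)^{\nu}$, obtained from \eqref{D1}, is absorbed into the exponential by shrinking $c'$ slightly. This gives \eqref{DG'}, and the special case $p_0=2=p_0'$ recovers \eqref{DG}.

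The step I expect to require the most attention is the absorption of the polynomial volume ratio $(1+d(x,y)/t)^{\nu}$ into the stretched-exponential decay. This works precisely because the exponent $\beta_i/(\beta_i-1)$ is strictly greater than $1$ (which holds since $\beta_i>1$); explicitly, for every $0<c'<c$ and every $\gamma>1$ one has $(1+s)^{\nu}e^{-cs^{\gamma}}\lesssim e^{-c's^{\gamma}}$ uniformly in $s\ge 0$. Once \eqref{DG} and \eqref{DG'} are verified, the corollary follows at once from Theorems \ref{H1equiv} and \ref{equihl}.
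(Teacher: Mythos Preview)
Your proposal is correct and follows the same approach as the paper. The paper does not give a separate proof of this corollary; it simply records (in Section~\ref{HK estimates}) the chain of implications \eqref{ue} $\Rightarrow$ \eqref{DG'} $\Rightarrow$ \eqref{DG} (the latter via Lemma~\ref{BK'}) and then invokes Theorems~\ref{H1equiv} and~\ref{equihl}, which is exactly your plan, the only difference being that you spell out the kernel--H\"older argument for \eqref{ue} $\Rightarrow$ \eqref{DG'} in detail. One harmless quibble: the absorption $(1+s)^{\nu}e^{-cs^{\gamma}}\lesssim e^{-c's^{\gamma}}$ holds for every $\gamma>0$, not ``precisely because'' $\gamma>1$, so that remark slightly overstates what is needed.
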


In the following theorem, we show that for $1<p<2$, the equivalence may not hold between $L^p$ and $H^p$ defined via 
conical square function $S_h$ with scaling $t^2$. The counterexamples we find are certain Riemannian manifolds satisfying \eqref{doubling} and two-sided sub-Gaussian heat kernel estimate: \eqref{ue} and its reverse, with $\beta_1=2$ and $\beta_2=m>2$. Notice that in this case, $L$ is the non-negative Laplace-Beltrami operator, which we denote by $\Delta$. For simplicity, we denote \eqref{ue} by $(U\!E_{2,m})$ and the two sided estimate by $(HK_{2,m})$. Also, we denote by $H_{\Delta,m,mol}^1$ the $H^1$ space defined via molecules $H_{L,\rho,mol}^1$, $H_{\Delta,S_h^m}^{p}$ the $H^p$ space defined via square functions $H_{L,S_h^{\rho}}^{p}$. 

\begin{thm}\label{noequiv}
Let $M$ be a Riemannian manifold with polynomial volume growth
\begin{align}\label{d}
V(x,r) \simeq r^d,\,\, r\geq 1,
\end{align}
as well as two-sided sub-Gaussian heat kernel estimate ($H\!K_{2,m}$) with $2<m<d/2$, that is, $(U\!E_{2,m})$ and 
the matching lower estimate. Then 
$$L^p(M)\subset H_{\Delta,S_h}^p(M)$$ 
doesn't hold for $p\in \br{\frac{d}{d-m},2}$.
\end{thm}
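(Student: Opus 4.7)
My plan is to construct a family of test functions $f_R=\mathbbm{1}_{B(x_0,R)}$ for $R\geq 1$ in $L^p(M)\cap L^2(M)$ such that $\|S_h f_R\|_{L^p}/\|f_R\|_{L^p}$ will diverge as $R\to\infty$, thereby precluding any continuous embedding $L^p(M)\hookrightarrow H_{\Delta,S_h}^p(M)$. By \eqref{d}, $\|f_R\|_{L^p}^p\simeq R^d$, so all the work lies on the $S_h$-side.

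The decisive input I would establish is a two-sided pointwise estimate
\[
|t^2\Delta e^{-t^2\Delta}f_R(y)|\simeq R^d\,t^{-2d/m}\quad\text{for }t\geq R^{m/2}\text{ and }y\in B(x_0,c t^{2/m}),
\]
extracted from $(H\!K_{2,m})$ as follows. For $s\gtrsim R^m$ and $d(x_0,y)\lesssim s^{1/m}$, the heat kernel satisfies $p_s(y,z)\simeq s^{-d/m}$ uniformly for $z\in B(x_0,R)$, so $e^{-s\Delta}f_R(y)\simeq R^d s^{-d/m}$; the self-similar form of the sub-Gaussian kernel then transfers this into a two-sided estimate $|\Delta e^{-s\Delta}f_R(y)|\simeq R^d s^{-d/m-1}$ upon differentiating in $s$, and specialization to $s=t^2$ yields the claim.

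With this in hand I would fix $x$ with $r:=d(x,x_0)\geq R^{m/2}$. For $t\geq 2r$ the inclusion $B(x_0,ct^{2/m})\subset B(x,t)$ holds (since $m>2$ forces $t^{2/m}\ll t$) and $V(x,t)\simeq t^d$. Restricting the $S_h$-integral to this subregion gives
\[
S_h f_R(x)^2\;\gtrsim\;R^{2d}\int_{2r}^\infty t^{-d(m+2)/m}\,\frac{dt}{t}\;\simeq\;R^{2d}r^{-d(m+2)/m}.
\]
Integrating over $\{x:d(x,x_0)\geq R^{m/2}\}$ using \eqref{d}:
\[
\|S_h f_R\|_{L^p}^p\;\gtrsim\;R^{pd}\int_{R^{m/2}}^\infty r^{-pd(m+2)/(2m)+d-1}\,dr.
\]
If $p\leq 2m/(m+2)$ this integral diverges, already forcing $S_h f_R\notin L^p$ while $f_R\in L^p$; otherwise it equals a constant times $R^{md/2-pd(m+2)/4}$, giving $\|S_h f_R\|_{L^p}^p/\|f_R\|_{L^p}^p\gtrsim R^{d(m-2)(2-p)/4}\to\infty$ as $R\to\infty$, since $m>2$ and $p<2$. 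In either regime the embedding fails throughout $(d/(d-m),2)$.

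The principal technical obstacle will be the two-sided pointwise estimate on $|t^2\Delta e^{-t^2\Delta}f_R(y)|$, equivalently on $|\partial_s p_s|$ on the diagonal. The upper half is immediate from $(U\!E_{2,m})$, but the matching lower bound requires exploiting the self-similar structure of the sub-Gaussian heat kernel---or, alternatively, a Harnack-type transfer from the integrated identity $\int_{s_1}^{s_2}\Delta e^{-s\Delta}f_R\,ds=e^{-s_1\Delta}f_R-e^{-s_2\Delta}f_R$, whose endpoints are two-sided controlled. Everything else reduces to routine computations with the volume and heat-kernel estimates.
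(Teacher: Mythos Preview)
Your strategy is different from the paper's, and the difference matters because the step you flag as ``the principal technical obstacle'' is in fact a genuine gap that you have not closed. The assertion
\[
|\Delta e^{-s\Delta}f_R(y)|\simeq R^d\,s^{-d/m-1}
\]
does \emph{not} follow from the two-sided bound $e^{-s\Delta}f_R(y)\simeq R^d s^{-d/m}$ by ``differentiating'': a function trapped between two constant multiples of $s^{-d/m}$ can have derivative vanishing on a large set. The hypothesis $(H\!K_{2,m})$ gives only upper and lower envelopes for $p_s$, not an exact scaling law, so there is no ``self-similar structure of the sub-Gaussian heat kernel'' to invoke. Your alternative via the integrated identity $\int_{s_1}^{s_2}\Delta e^{-s\Delta}f_R\,ds=e^{-s_1\Delta}f_R-e^{-s_2\Delta}f_R$ together with parabolic Harnack would require first knowing that $\Delta e^{-s\Delta}f_R\geq 0$ on a full parabolic cylinder, and for $y$ outside $B_R$ this fails for small $s$ (the profile increases from $0$ before it decreases); controlling where the sign change occurs is not addressed. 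One can probably salvage the argument by working with $L^2$-in-$s$ averages rather than pointwise values, but that is not what you wrote.

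The paper sidesteps this obstacle entirely by a device you should note: instead of $f_R=\mathbbm 1_{B_R}$, it takes $f_n=G\phi_n$ with $G$ the Green operator and $\phi_n$ a smooth cutoff, so that $\Delta f_n=\phi_n$. Then
\[
t^2\Delta e^{-t^2\Delta}f_n=t^2\,e^{-t^2\Delta}\phi_n\geq t^2\,e^{-t^2\Delta}\mathbbm 1_{B_n}\geq 0,
\]
and a pointwise lower bound follows directly from the \emph{lower} heat kernel estimate (their Lemma~\ref{LB}) without any time-derivative or sign analysis. The price is that $\|f_n\|_p$ is no longer trivial to bound above; the paper handles this via a Sobolev inequality (their Theorem~\ref{hkSob}), which is where the restriction $p>\frac{D}{D-2}=\frac{d}{d-m}$ actually enters. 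Your approach, by contrast, gets $\|f_R\|_p$ for free but pays with the unresolved lower bound on $|\Delta e^{-s\Delta}f_R|$.
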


As an application of this Hardy space theory, we have
\begin{thm}
\label{thm2}
Let $M$ be a manifold satisfying the doubling volume property \eqref{doubling} and the heat kernel estimate $(U\!E_{2,m})$, $m>2$, 
that is, the upper bound of $(H\!K_{2,m})$. 
Then the Riesz transform $\nabla \Delta^{-1/2}$ is $H_{\Delta,m}^1-L^1$ bounded.
\end{thm}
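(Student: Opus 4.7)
Specializing Theorem~\ref{H1equiv} to $\beta_1=2$, $\beta_2=m$ (so $\rho(t)=t^2$ on $(0,1]$ and $\rho(t)=t^m$ on $[1,\infty)$), we have $H^1_{\Delta,m,\mol}(M)=H^1_{\Delta,S_h^\rho}(M)$ with equivalent norms; hence it suffices to prove a uniform molecular bound $\|\nabla\Delta^{-1/2}a\|_{L^1(M)}\lesssim 1$ for every $(1,2,\varepsilon)$-molecule $a=\Delta^K b$ associated with a ball $B$ of radius $r_B$, with $K$ and $\varepsilon$ as in Definition~\ref{mol}. A standard $\ell^1$-summability argument, relying on the $L^2$-boundedness of $\nabla\Delta^{-1/2}$, then extends the estimate to an arbitrary $f\in H^1_{\Delta,m,\mol}$.

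Writing $\nabla\Delta^{-1/2}a=\frac{1}{\sqrt\pi}\int_0^\infty\nabla e^{-s\Delta}a\,\frac{ds}{\sqrt s}$ via subordination, I would split the integral at the molecular scale $s=\rho(r_B)$ into a local part $I_{\mathrm{loc}}$ (for $s\le\rho(r_B)$) and a tail $I_\infty$ (for $s>\rho(r_B)$). On each annulus $C_j(B)$, Cauchy--Schwarz gives
$$\|\nabla\Delta^{-1/2}a\|_{L^1(C_j(B))}\le V(2^jB)^{1/2}\|\nabla\Delta^{-1/2}a\|_{L^2(C_j(B))},$$
so the task is to produce $L^2$ bounds on $C_j(B)$ with enough decay to beat the volume factor $V(2^jB)^{1/2}\lesssim 2^{j\nu/2}V(B)^{1/2}$ after summation in $j$. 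For $I_\infty$ I would exploit the cancellation $a=\Delta^K b$: splitting $e^{-s\Delta}$ symmetrically and using the spectral bound $\|\Delta^K e^{-s\Delta/2}\|_{2\to 2}\lesssim s^{-K}$ together with $\|b\|_{L^2}\lesssim \rho(r_B)^K V(B)^{-1/2}$ yields $\|\nabla e^{-s\Delta}a\|_{L^2}\lesssim s^{-K-1/2}\rho(r_B)^K V(B)^{-1/2}$, which combined with $ds/\sqrt s$ is integrable at infinity for $K\ge 1$; the off-diagonal estimate \eqref{DG} on the remaining semigroup factor supplies the decay from $B$ to $C_j(B)$ needed to sum in $j$. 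For $I_{\mathrm{loc}}$, the decisive ingredient, taken from \cite{CCFR15}, is the integrated gradient heat kernel estimate: for $f$ supported in $B$ and $s$ small,
$$\|\nabla e^{-s\Delta}f\|_{L^2(C_j(B))}\lesssim \frac{1}{\sqrt s}\exp\!\left(-c\,\frac{(2^j r_B)^2}{s}\right)\|f\|_{L^2(B)}.$$
Decomposing $a=\sum_i a\mathbbm{1}_{C_i(B)}$ and plugging in the molecular bound $\|a\|_{L^2(C_i(B))}\lesssim V(2^iB)^{-1/2}2^{-i\varepsilon}$ produces Gaussian decay in $|i-j|$; integrating in $s$ and summing over $i,j$ with the aid of $2^{-i\varepsilon}$ then closes the estimate.

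\textbf{Main obstacle.} The crux is the \emph{Gaussian} (rather than sub-Gaussian) spatial decay in the gradient heat kernel bound: the argument above would collapse with merely sub-Gaussian off-diagonal decay, because the volume factor $V(2^jB)^{1/2}\lesssim 2^{j\nu/2}V(B)^{1/2}$ cannot be absorbed by a factor of the form $\exp(-c(2^j r_B)^{m/(m-1)}/s^{1/(m-1)})$ when $s\sim\rho(r_B)$ and $m>2$. This Gaussian gradient estimate, despite the merely sub-Gaussian behaviour of the semigroup itself at infinity, is precisely the main technical achievement of \cite{CCFR15}, and I would invoke it as a black box. Secondary difficulties are the case analysis at the transition $r_B\sim 1$ between the two regimes of $\rho$, and verifying that $K>\nu/4$ (the minimal value prescribed by Definition~\ref{mol} when $\beta_1=2$) together with a sufficiently large $\varepsilon$ suffices for all the resulting sums to converge.
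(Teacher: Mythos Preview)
Your approach has a genuine gap that appears precisely when $r_B\ge 1$. The $L^2$ off-diagonal gradient estimate you display is the classical Davies--Gaffney inequality; it holds on every Riemannian manifold, is not a contribution of \cite{CCFR15}, and is always \emph{Gaussian}, irrespective of the sub-Gaussian heat kernel. But Gaussian decay is not adapted to the molecular scale $\rho(r_B)=r_B^m$. Concretely, take $a_0=a\mathbbm 1_{4B}$ and $j\ge 3$; your $I_{\mathrm{loc}}$ bound via Minkowski yields
\[
\int_0^{r_B^m}\frac{1}{s}\exp\Bigl(-c\,\frac{(2^jr_B)^2}{s}\Bigr)\,ds
=\int_{c\,4^j r_B^{2-m}}^\infty \frac{e^{-u}}{u}\,du,
\]
which is only of order $\log\bigl(r_B^{m-2}/4^j\bigr)$ in the range $4^j<r_B^{m-2}$. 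Summing $V(2^jB)^{1/2}\|a_0\|_2\lesssim 2^{j\nu/2}$ times this over that range of $j$ produces a term of size roughly $r_B^{(m-2)\nu/4}$, unbounded as $r_B\to\infty$. Your ``Main obstacle'' paragraph has the situation reversed: at $s\sim\rho(r_B)=r_B^m$ the \emph{sub-Gaussian} factor
\[
\exp\bigl(-c(2^jr_B)^{m/(m-1)}/s^{1/(m-1)}\bigr)=\exp\bigl(-c\,2^{jm/(m-1)}\bigr)
\]
does absorb any polynomial in $2^j$, whereas the Gaussian factor $\exp(-c\,4^jr_B^{2-m})$ gives no $j$-decay once $r_B$ is large.

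The paper's proof circumvents this in two ways. First, it decomposes $Ta=T\bigl(I-e^{-\rho(r_B)\Delta}\bigr)a+Te^{-\rho(r_B)\Delta}a$ rather than cutting the subordination integral, so the ``local'' piece is a bounded operator and its near part is handled by $L^2$-boundedness of $T$ directly. Second, and more importantly, the far annuli are treated in $L^1$ and in $L^p$ for $1<p<2$, \emph{not} via $L^2$ Cauchy--Schwarz. The decisive inputs from \cite{CCFR15} are the $L^1$ integrated gradient bound $\int_{d(x,y)>r}|\nabla_x p_t(x,y)|\,d\mu(x)\lesssim t^{-1/2}\exp(-c(\rho(r)/t)^{1/(m-1)})$ (Lemma~\ref{EstimateKernelCor}) and the $L^p$ version $\|\,|\nabla\Delta^n e^{-t\Delta}f|\,\|_{L^p(F)}\lesssim t^{-n-1/2}\exp(-c(\rho(d(E,F))/t)^{1/(m-1)})\|f\|_{L^p(E)}$ (Lemma~\ref{time derivative}); it is this $\rho$-adapted decay, combined with the time factor $t^{-1/2}$, that makes the sums converge uniformly in $r_B$. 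If you want to salvage your direct splitting, you must replace the $L^2$ Cauchy--Schwarz step by H\"older with these $L^1$/$L^p$ estimates on the far annuli.
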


\begin{rem}
Recall that under the same assumptions, it is proved in \cite{CCFR15} that the Riesz transform is of weak type $(1,1)$ and thus $L^p$ bounded for $1<p<2$. 
\end{rem}


\section{Preliminaries}

%

\subsection{More about sub-Gaussian off-diagonal and pointwise heat kernel estimates}\label{HK estimates}

Let us first give some examples that satisfy \eqref{DG'} with $\beta_1\neq \beta_2$.  More examples of this case are metric measure Dirichlet spaces, which we refer to \cite{Ba13,Stu95,Stu94,HSC01} for details. 

\begin{ex}\label{fm}
Fractal manifolds.

Fractal manifolds are built from graphs with a self-similar structure at infinity by replacing the edges of the graph with tubes of length $1$ and then gluing the tubes together smoothly at the vertices. For instance, see \cite{BCG01} for the construction of Vicsek graphs. For any $D,m\in \R$ such that $D> 1$ and  $2< m\leq D+1$, there exist complete connected Riemannian manifolds satisfying $V(x,r) \simeq r^D$ for $r\geq 1$ and \eqref{ue} with $\beta_1=2$ and $\beta_2=m>2$ in \eqref{rho} (see \cite{Ba04} and \cite{CCFR15}).
\end{ex}

\begin{ex}\label{cs}
Cable systems (Quantum graphs) (see \cite{V85}, \cite[Section 2]{BB04}). 

Given a weighted graph $(G,E,\nu)$, we define the cable system $G_C$ by replacing each edge of $G$ by a copy of $(0,1)$ 
joined together at the vertices. The measure $\mu$ on $G_C$ is given by $d\mu(t)=\nu_{xy} dt$ for $t$ in the cable connecting 
$x$ and $y$, and $\mu$ assigns no mass to any vertex. The distance between two points $x$ and $y$ is given as follows: 
if $x$ and $y$ are on the same cable, the length is just the usual Euclidean distance $|x-y|$. If they are on different cables, 
then the distance is $\min\{|x-z_x|+d(z_x,z_y)+|z_y-y|\}$ ($d$ is the usual graph distance), where the minimum is taken over all 
vertices $z_x$ and $z_y$ such that $x$ is on a cable with one end at $z_x$ and $y$ is on a cable with one end at $z_y$. 
One takes as the core $\mathcal C$ the functions in $C(G_C)$ which have compact support and are $C^1$ on each cable, 
and sets
\[
\mathcal E(f,f):=\int_{G_C} \abs{f'(t)}^2 d\mu(t).
\]
Let $L$ be the associated non-negative self-adjoint operator 
associated with $\mathcal E$ and $\{e^{-tL}\}_{t>0}$ be the generated semigroup. Then the associated kernel may satisfies \eqref{ue}. 
For example, the cable graph associated with the Sierpinski gasket graph (in $\mathbb Z^2$) satisfies $(U\!E_{2,\log 5/\log2})$.
\end{ex}


The following are some useful lemmas for the off-diagonal estimates. We first observe that \eqref{ue} $\Rightarrow$ \eqref{DG'} $\Rightarrow$ \eqref{DG} for $1\leq p_0\leq 2$. Indeed,
\begin{lem}[\cite{BK05}]\label{BK'}
Let $(M,d,\mu)$ be a metric measure space satisfying the doubling volume property. Let $L$ be a non-negative self-adjoint operator on $L^2(M,\mu)$. Assume that 
\eqref{DG'} holds. Then for all $p_0\leq u \leq v \leq p_0'$, we have
\begin{align}
\norm{\mathbbm 1_{B(x,t)} e^{-\rho(t)L} \mathbbm 1_{B(y,t)}}_{u\to v} 
\lesssim \left\{
\begin{aligned}
&\frac{1}{V^{\frac{1}{u}-\frac{1}{v}}(x,t)} \exp\br{-c\br{\frac{d(x,y)}{t}}^{\frac{\beta_1}{\beta_1-1}}} & 0<t<1, \\
&\frac{1}{V^{\frac{1}{u}-\frac{1}{v}}(x,t)} \exp\br{-c\br{\frac{d(x,y)}{t}}^{\frac{\beta_2}{\beta_2-1}}}, & t\geq 1.
\end{aligned}\right.
\end{align}
\end{lem}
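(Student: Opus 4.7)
The plan is to derive the full range of off-diagonal bounds from \eqref{DG'} by a direct sandwich argument using only H\"older's inequality and the doubling property \eqref{doubling}, without invoking any interpolation theorem.

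First, since the input to the operator is effectively $\mathbbm 1_{B(y,t)}f$ and $u\ge p_0$, H\"older's inequality on the ball $B(y,t)$ gives
\[
\|\mathbbm 1_{B(y,t)}f\|_{p_0}\le V(y,t)^{1/p_0-1/u}\|f\|_u.
\]
Similarly, since the output is supported in $B(x,t)$ and $v\le p_0'$,
\[
\|\mathbbm 1_{B(x,t)}g\|_v\le V(x,t)^{1/v-1/p_0'}\|g\|_{p_0'}.
\]
Sandwiching these around the assumed $L^{p_0}\to L^{p_0'}$ bound \eqref{DG'} yields
\[
\|\mathbbm 1_{B(x,t)}e^{-\rho(t)L}\mathbbm 1_{B(y,t)}f\|_v\lesssim V(x,t)^{1/v-1/p_0}\,V(y,t)^{1/p_0-1/u}\,G(x,y,t)\,\|f\|_u,
\]
where $G(x,y,t)$ denotes the exponential factor in \eqref{DG'}, with $\beta_1$ or $\beta_2$ according to the regime $t<1$ or $t\ge 1$.

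Next I would apply doubling in the form $V(y,t)\lesssim(1+d(x,y)/t)^{\nu}V(x,t)$ coming from \eqref{D1}; since $1/p_0-1/u\ge 0$, this lets me replace $V(y,t)^{1/p_0-1/u}$ by $V(x,t)^{1/p_0-1/u}$ at the cost of a polynomial factor $(1+d(x,y)/t)^{\nu(1/p_0-1/u)}$. The $V(x,t)$ exponents then collapse to
\[
V(x,t)^{1/v-1/p_0}\cdot V(x,t)^{1/p_0-1/u}=V(x,t)^{-(1/u-1/v)},
\]
which is exactly the normalization claimed in the lemma. The residual polynomial factor is absorbed into $G$ via the standard estimate $(1+s)^{\alpha}e^{-cs^{\beta_i/(\beta_i-1)}}\lesssim e^{-c's^{\beta_i/(\beta_i-1)}}$ with some $0<c'<c$, producing the correct exponential decay in both regimes $t<1$ and $t\ge 1$.

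I do not anticipate any genuine obstacle: the whole argument is a single H\"older sandwich combined with doubling, and the only care required is the bookkeeping of the $V(x,t)$ and $V(y,t)$ exponents together with the routine absorption of polynomial factors into the exponential. One could alternatively prove this by Riesz--Thorin interpolation between \eqref{DG'} and the trivial $L^2\to L^2$ contractivity of the semigroup (which is presumably the route in \cite{BK05}), but the direct sandwich is shorter and, crucially, yields the full two-parameter region $\{p_0\le u\le v\le p_0'\}$ at once rather than only an interpolation segment.
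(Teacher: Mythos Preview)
Your argument is correct. The paper does not supply its own proof of this lemma; it simply attributes the result to \cite{BK05} and moves on. Your direct H\"older sandwich---embedding $L^u(B(y,t))\hookrightarrow L^{p_0}(B(y,t))$ on the input side and $L^{p_0'}(B(x,t))\hookrightarrow L^{v}(B(x,t))$ on the output side, then replacing $V(y,t)$ by $V(x,t)$ via doubling and absorbing the resulting polynomial into the exponential---is a standard and entirely valid route, and your exponent bookkeeping is correct. The interpolation alternative you allude to is indeed closer in spirit to the treatment in \cite{BK05}, but your approach is shorter, self-contained, and immediately gives the full two-parameter range $p_0\le u\le v\le p_0'$.
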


\begin{rem}
The estimate \eqref{DG'} is equivalent to the $L^{p_0}-L^{2}$ off-diagonal estimate
\[
\norm{\mathbbm 1_{B(x,t)} e^{-\rho(t)L} \mathbbm 1_{B(y,t)}}_{p_0\to 2} 
\lesssim \left\{
\begin{aligned}
&\frac{1}{V^{\frac{1}{p_0}-\frac{1}{2}}(x,t)} \exp\br{-c\br{\frac{d(x,y)}{t}}^{\frac{\beta_1}{\beta_1-1}}} & 0<t<1, \\
&\frac{1}{V^{\frac{1}{p_0}-\frac{1}{2}}(x,t)} \exp\br{-c\br{\frac{d(x,y)}{t}}^{\frac{\beta_2}{\beta_2-1}}}, & t\geq 1.
\end{aligned}\right.
\]
We refer to \cite{BK05,CS08} for the proof.
\end{rem}

In fact, we also have
\begin{lem}[\cite{BK05,U11}]\label{BK}
Let $(M,d,\mu)$ satisfy \eqref{doubling}. Let $L$ be a non-negative self-adjoint operator on $L^2(M,\mu)$. Assume that 
\eqref{DG'} holds. Then for all $p_0\leq u \leq v \leq p_0'$ and $k \in \mathbb N$, we have
\begin{enumerate}
\item For any ball $B\subset M$ with radius $r>0$, and any $i\geq 2$,
\begin{align}\label{DG2'}
\norm{\mathbbm 1_{B}(tL)^k e^{-tL} \mathbbm 1_{C_i(B)}}_{u\to v}, 
\norm{\mathbbm 1_{C_i(B)}(tL)^k e^{-tL} \mathbbm 1_{B}}_{u\to v} \lesssim
 \left\{\begin{aligned}
&\frac{2^{i\nu}}{\mu^{\frac1{u}-\frac1{v}}(B)} e^{-c\br{\frac{2^{i\beta_1} r^{\beta_1}}{t}}^{1/(\beta_1-1)}} & 0<t<1, \\
&\frac{2^{i\nu}}{\mu^{\frac1{u}-\frac1{v}}(B)} e^{-c\br{\frac{2^{i\beta_2} r^{\beta_2}}{t}}^{1/(\beta_2-1)}}, & t\geq 1.
\end{aligned}\right.
\end{align}

\item For all $\alpha,\beta\geq 0$ such that $\alpha+\beta=\frac1{u}-\frac1{v}$,
\[
\norm{V^{\alpha}(\cdot,t) (\rho(t)L)^k e^{-\rho(t)L} V^{\beta}(\cdot, t)}_{u\to v} \leq C.
\]
\end{enumerate}
\end{lem}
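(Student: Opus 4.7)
The plan is to follow the Blunck--Kunstmann composition technique, adapted to the bi-regime scaling $\rho$. Part (1) is the substantive estimate; part (2) follows from it by an annular summation. For part (1) I would carry out three sub-steps.

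(i) Starting from Lemma 2.2, which already gives the $L^u$-to-$L^v$ analogue of $(DG_\rho^{p_0})$, I change variables by setting $s=\rho^{-1}(t)$ so that the spatial scale $s$ corresponds to the time $t$. This produces an estimate
\[
\|\mathbbm{1}_{B(x,s)} e^{-tL} \mathbbm{1}_{B(y,s)}\|_{u\to v} \lesssim V(x,s)^{-(1/u-1/v)} \exp\bigl(-c(d(x,y)/s)^{\beta_i/(\beta_i-1)}\bigr),
\]
with $i=1$ if $t<1$ and $i=2$ otherwise. (ii) To pass from balls of the intrinsic scale $s$ to balls of arbitrary radius $r$, I distinguish $r\ge s$ and $r<s$: in the first case, I cover $B$ and $C_i(B)$ by $\lesssim(r/s)^\nu$ balls of radius $s$ via the doubling property; in the second, I simply enlarge $B$ to a ball of radius $s$. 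The $(r/s)^\nu$ multiplicity is absorbed in the polynomial factor $2^{i\nu}$, and the volumes are compared using \eqref{D1}, yielding the prefactor $2^{i\nu}/\mu(B)^{1/u-1/v}$.

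(iii) To introduce the analytic factor $(tL)^k$, I factorise
\[
(tL)^k e^{-tL} = e^{-tL/3}\cdot\bigl[(tL)^k e^{-tL/3}\bigr]\cdot e^{-tL/3},
\]
and note that the middle operator is $L^2$-bounded with norm $\le (3k/e)^k$ by spectral calculus, since $L$ is non-negative self-adjoint. I sandwich this $L^2$-bounded piece between two off-diagonal pieces, applying the estimate of (ii) with $u\to 2$ on one side and $2\to v$ on the other, and sum over annular decompositions of both intermediate spaces. The product of two exponentials of the form $\exp(-c d^{p})$ with $p=\beta_i/(\beta_i-1)\ge 1$ is controlled by a single exponential via the convexity inequality $a^{p}+b^{p}\ge 2^{1-p}(a+b)^{p}$, at the price of degrading the constant $c$. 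The symmetric bound on the adjoint operator is automatic since $L$ is self-adjoint.

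For part (2), I substitute $t$ by $\rho(t)$ in part (1). For $f\in L^u$ and $g\in L^{v'}$, I decompose the bilinear pairing $\langle V^{\alpha}(\cdot,t)(\rho(t)L)^k e^{-\rho(t)L} V^{\beta}(\cdot,t) f, g\rangle$ relative to a countable bounded-overlap cover of $M$ by balls $B_j=B(x_j,t)$, writing $f=\sum_i \mathbbm{1}_{C_i(B_j)} f$ and $g=\sum_\ell \mathbbm{1}_{C_\ell(B_j)} g$. Part (1) applied to each $(B_j,C_i(B_j))$ pair yields off-diagonal bounds; the weights $V(\cdot,t)^\alpha$ and $V(\cdot,t)^\beta$ combine with $\alpha+\beta=1/u-1/v$ and the doubling-induced comparison $V(y,t)\le 2^{i\nu}V(x_j,t)$ for $y\in C_i(B_j)$ to cancel the prefactor $1/\mu(B_j)^{1/u-1/v}$, leaving only a polynomial in $2^i$ times the exponential. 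Summation over $i$ converges because of the super-polynomial decay of the exponential, and summation over $j$ is handled by the bounded-overlap property.

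The main technical obstacle is step (iii): composing three operators across two annular decompositions a priori produces a double sum whose individual terms combine two exponential decays, and one must show this is still controlled by a single exponential in the full distance from $B$ to $C_i(B)$. The convexity argument mentioned above is the crux. A secondary nuisance is that the regime change between $t<1$ and $t\ge 1$ forces a case split in the exponent; this is handled seamlessly by the smoothened $\rho$ of \eqref{rho}, so no boundary issue arises at $t=1$.
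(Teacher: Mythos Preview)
The paper does not give its own proof of this lemma; it simply cites \cite{BK05,U11} and moves on. Your proposal follows precisely the Blunck--Kunstmann composition/covering method that those references use (reduction to the intrinsic scale $\rho^{-1}(t)$, covering to pass to arbitrary radii, sandwiching $(tL)^k$ between two heat propagators via spectral calculus, then annular summation for part (2)), and the argument is correct as outlined. One small remark: in step (iii) your factorisation uses $e^{-tL/3}$, whose regime index $\beta_i$ is governed by $t/3$ rather than $t$, so for $1\le t<3$ the intermediate estimate comes with exponent $\beta_1/(\beta_1-1)$ while the claim for $t\ge1$ is stated with $\beta_2/(\beta_2-1)$; since $\beta_1\le\beta_2$ implies $\beta_1/(\beta_1-1)\ge\beta_2/(\beta_2-1)$, the $\beta_1$-bound is stronger when the argument of the exponential is $\ge1$ (and both sides are comparable to $1$ otherwise), so this mismatch is harmless---you may want to say this explicitly rather than deferring entirely to the smoothened $\rho$.
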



\subsection{Tent spaces}
We recall definitions and properties related to tent spaces on metric measure spaces with the doubling volume property, following \cite{CMS85}, \cite{Ru07}.

Let $M$ be a metric measure space satisfying \eqref{doubling}. For any $x\in M$ and for any closed subset $F \subset M$, a saw-tooth region is defined as $\mathcal R(F) :=\bigcup _{x\in F}
\Gamma(x)$. If $O$ is an open subset of $M$, then the ``tent" over $O$, denoted by $\widehat{O}$, is defined as
\[
\widehat{O}:=[\mathcal R(O^c)]^c=\{(x,t)\in M \times (0,\infty ): d(x,O^c)\geq t\}.
\]

For a measurable function $F$ on $M\times (0,\infty )$, consider
\[
\mathcal A F(x)
=\left(\iint_{\Gamma (x)}|F(y,t)|^2\frac{d\mu(y)}{V(x,t)}\frac{dt}{t}\right)^{1/2}.
\]
Given $0 < p < \infty $, say that a measurable function $F\in T_2^p(M\times (0,\infty ))$ if
\[
\Vert F\Vert_{T_2^p(M)}:=\Vert \mathcal A F\Vert_{L^p(M)}<\infty.
\]
For simplicity, we denote $T_2^p(M\times (0,\infty) )$ by $T_2^p(M)$ from now on. 

Therefore, for $f\in H_{L,S_h}^p(M)$ and $0<p<\infty$, write $ F(y,t)=\rho(t) L e^{-\rho(t)L }f(y)$, we have
\[
\Vert f\Vert_{H_{L,S_h^{\rho}}^p(M)}=\Vert F\Vert_{T_2^p(M)}.
\]

Consider another functional
\[
\mathcal CF(x)
=\sup_{x\in B}\left(\iint_{\widehat B}|F(y,t)|^2\frac{d\mu(y)dt}{t}\right)^{1/2},
\]
we say that a measurable function $F\in T_2^{\infty}(M)$ if $\mathcal C F\in L^{\infty}(M)$.

\begin{prop}
Suppose $1<p<\infty $, let $p'$ be the conjugate of $p$. Then the pairing 
$<F,G>\longrightarrow \int_{M\times (0,\infty )}F(x, t)G(x,t)\frac{d\mu (x)dt}{t}$ realizes $T_2^{p'}(M)$ as the dual of $T_2^p(M)$.
\end{prop}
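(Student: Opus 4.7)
This is the classical Coifman--Meyer--Stein tent space duality, adapted from Euclidean space to a doubling metric measure space. I would prove the two inclusions $T_2^{p'}(M) \hookrightarrow (T_2^p(M))^*$ and $(T_2^p(M))^* \hookrightarrow T_2^{p'}(M)$ separately.

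For the forward direction, the key identity is Fubini combined with \eqref{D2}. Given $F \in T_2^p$ and $G \in T_2^{p'}$, I would write
\[
\int_0^\infty\!\!\int_M |F(y,t)G(y,t)|\,\frac{d\mu(y)\,dt}{t}
\simeq \int_0^\infty\!\!\int_M |F(y,t)G(y,t)|\Bigl(\int_{d(x,y)<t}\frac{d\mu(x)}{V(x,t)}\Bigr)\frac{d\mu(y)\,dt}{t},
\]
switch the order of integration so that the inner integration is over the cone $\Gamma(x)$, apply the Cauchy--Schwarz inequality inside the cone, and obtain the pointwise bilinear estimate $|\langle F,G\rangle| \lesssim \int_M \mathcal{A}F(x)\mathcal{A}G(x)\,d\mu(x)$. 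A final application of H\"older's inequality in $L^p(M)$--$L^{p'}(M)$ then gives $|\langle F,G\rangle|\lesssim \|F\|_{T_2^p}\|G\|_{T_2^{p'}}$, which both shows the pairing is well-defined and embeds $T_2^{p'}$ into $(T_2^p)^*$.

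For the converse, fix $\ell \in (T_2^p(M))^*$. The strategy is to first construct the representing kernel $G$ by a local Riesz representation. For each compact set $K$ of the form $K \subset B \times [a,1/a]$ (ball $B \subset M$, $a>0$), the doubling property implies that if $F$ is supported in $K$ then $\mathcal{A}F$ is supported in a fixed enlargement of $B$ and $\mathcal{A}F(x) \leq C_K \|F\|_{L^2(K,d\mu\,dt/t)}$, so $L^2(K,d\mu\,dt/t)$ embeds continuously into $T_2^p(M)$. The Riesz representation theorem on the Hilbert space $L^2(K,d\mu\,dt/t)$ thus produces $G_K$ with $\ell(F) = \int\int FG_K\,d\mu\,dt/t$ on $K$. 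Exhausting $M\times(0,\infty)$ by an increasing sequence of such $K_n$, the uniqueness of the $G_{K_n}$ glues them into a single measurable function $G$.

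The main obstacle lies in upgrading this construction to the quantitative bound $\|\mathcal{A}G\|_{L^{p'}(M)} \lesssim \|\ell\|$, which is precisely the statement $G\in T_2^{p'}$. For the truncations $G_n := G\mathbbm{1}_{K_n}$, each $G_n$ lies in $T_2^{p'}$ by the compact-support/bounded estimate, and for any bounded compactly supported $F$,
\[
\Bigl|\int\!\!\int F\,G_n\,\frac{d\mu\,dt}{t}\Bigr|
= |\ell(F\mathbbm{1}_{K_n})| \leq \|\ell\|\,\|\mathcal{A}(F\mathbbm{1}_{K_n})\|_{L^p}
\leq \|\ell\|\,\|F\|_{T_2^p}.
\]
The point where care is required is to conclude from this duality pairing estimate that $\|\mathcal{A}G_n\|_{L^{p'}}\leq C\|\ell\|$ uniformly in $n$; this is done by testing against a well-chosen dense class of $F$ (bounded functions with compact support in $M\times(0,\infty)$) and invoking the $L^p$--$L^{p'}$ duality on $M$ through the pointwise bilinear inequality already established in the forward direction, combined with a standard approximation/reflexivity argument available because $1<p<\infty$. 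Once the uniform bound is in hand, monotone convergence applied to $\mathcal{A}G_n \uparrow \mathcal{A}G$ concludes $G\in T_2^{p'}$ with the desired norm control, and the density of bounded compactly supported $F$ in $T_2^p(M)$ identifies $\ell$ with the pairing against $G$ on all of $T_2^p(M)$.
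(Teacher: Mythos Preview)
The paper does not actually prove this proposition; it is stated in Section~2.2 as a known preliminary fact about tent spaces, with the references \cite{CMS85} and \cite{Ru07} given for the theory. So there is no ``paper's own proof'' to compare against, and your outline should be judged on its own.

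Your forward direction is correct and is exactly the standard argument: Fubini plus \eqref{D2} converts the pairing into an integral over $M$ of cone integrals, Cauchy--Schwarz in each cone gives $|\langle F,G\rangle|\lesssim \int_M \mathcal{A}F\,\mathcal{A}G\,d\mu$, and H\"older finishes.

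The converse direction has the right architecture (local Riesz representation, gluing, uniform bound, monotone convergence), but the step you flag as ``where care is required'' is not actually carried out. You write that the bound $\|\mathcal{A}G_n\|_{p'}\lesssim\|\ell\|$ follows by ``invoking the $L^p$--$L^{p'}$ duality on $M$ through the pointwise bilinear inequality already established in the forward direction''. But that inequality goes the wrong way: it says $|\langle F,G_n\rangle|\lesssim \int \mathcal{A}F\,\mathcal{A}G_n$, an \emph{upper} bound on the pairing, whereas what you need is a \emph{lower} bound of the form $\|\mathcal{A}G_n\|_{p'}\lesssim \sup\{|\langle F,G_n\rangle|:\|F\|_{T_2^p}\le 1\}$. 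That reverse inequality is the substantive content of the duality and does not follow from reflexivity or density alone.

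The classical way to fill this gap (as in \cite{CMS85}, and carried over to spaces of homogeneous type in \cite{Ru07}) is to build an explicit test function. Given $G_n$, choose $h\ge 0$ in $L^p(M)$ realising $\|\mathcal{A}G_n\|_{p'}$ by $L^p$--$L^{p'}$ duality, set
\[
F(y,t)=\overline{G_n(y,t)}\cdot\frac{1}{V(y,t)}\int_{B(y,t)}h(x)\,d\mu(x),
\]
and use Fubini to see that $\langle F,G_n\rangle\simeq \int_M h\,(\mathcal{A}G_n)^2$. The cone average of $h$ is controlled pointwise by $\mathcal{M}h(x)$ for every $x$ with $d(x,y)<t$, so $\mathcal{A}F(x)\lesssim \mathcal{M}h(x)\,\mathcal{A}G_n(x)$; the maximal theorem then bounds $\|F\|_{T_2^p}$, and a suitable choice of $h$ (a power of $\mathcal{A}G_n$) closes the loop. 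This construction is the missing idea in your sketch.
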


Denote by $[\,,]_\theta $ the complex method of interpolation described in \cite{BL76}. Then we have the following result of 
interpolation of tent spaces, where the proof can be found in \cite{Am14}.
\begin{prop}
\label{inter}
Suppose $1\leq p_0 < p < p_1\leq \infty $, with $1/p = (1 - \theta )/p_0 +
\theta /p_1$ and $0 < \theta < 1$. Then
\[
[T_2^{p_0}(M),T_2^{p_1}(M)]_\theta =T_2^{p}(M).
\]
\end{prop}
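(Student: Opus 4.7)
The plan is to follow the Coifman--Meyer--Stein tent-space interpolation strategy, adapted to doubling metric measure spaces by realizing each $T_2^p(M)$ as a retract of a common vector-valued Lebesgue space, thereby reducing the result to Calder\'on's complex interpolation for Hilbert-valued $L^p$ spaces.

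Concretely, set $\mathcal H = L^2(M\times(0,\infty),\, d\mu\otimes dt/t)$ and define the isometric co-retraction $\Phi: T_2^p(M)\to L^p(M;\mathcal H)$ by
\[
\Phi F(x)(y,t)=\frac{\mathbbm 1_{\Gamma(x)}(y,t)}{V(x,t)^{1/2}}\,F(y,t),
\]
so that $\Vert\Phi F(x)\Vert_{\mathcal H}=\mathcal AF(x)$ and $\Vert\Phi F\Vert_{L^p(M;\mathcal H)}=\Vert F\Vert_{T_2^p(M)}$. The matching retraction $\pi: L^p(M;\mathcal H)\to T_2^p(M)$ is an averaging operator against the cone structure, whose $L^p$-boundedness uniform in $p\in[p_0,p_1]$ follows from Fubini combined with the doubling identity \eqref{D2}, and which is normalized so that $\pi\circ\Phi=\mathrm{Id}_{T_2^p}$. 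Since complex interpolation is functorial with respect to retractions and one has $[L^{p_0}(M;\mathcal H),L^{p_1}(M;\mathcal H)]_\theta=L^p(M;\mathcal H)$ by Calder\'on's theorem for Hilbert-valued Lebesgue spaces, the identity $[T_2^{p_0}(M),T_2^{p_1}(M)]_\theta=T_2^p(M)$ follows for $1\le p_0<p_1<\infty$.

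The main obstacle is the endpoint $p_1=\infty$. The tent space $T_2^\infty$, defined via the Carleson-type functional $\mathcal C$, is strictly smaller than the naive $L^\infty(M;\mathcal H)$, so the retraction scheme above no longer identifies $T_2^\infty$ with a complemented subspace of a Hilbert-valued $L^\infty$. To treat this endpoint I would instead combine the atomic decomposition of $T_2^1$ (every $F\in T_2^1$ decomposes as $F=\sum\lambda_jA_j$ with atoms $A_j$ supported on tents $\widehat{B_j}$ and normalized by $\Vert A_j\Vert_{T_2^2}\le\mu(B_j)^{-1/2}$) with the duality $(T_2^1)^*=T_2^\infty$ and a Calder\'on--Torchinsky-style analytic-family argument applied atom by atom. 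The doubling property \eqref{doubling} enters critically both in producing Whitney-type decompositions of open subsets of $M$ and in controlling the atomic structure, which is the technical content systematically developed in \cite{Am14}.
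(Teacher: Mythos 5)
The paper does not actually prove this proposition: it is quoted with a pointer to \cite{Am14}, so the only fair comparison is with the strategy of that reference (which, like \cite{CMS85} and Harboure--Torrea--Viviani in the Euclidean case, is indeed built on duality, atomic decomposition and the realization of tent spaces inside Hilbert-valued Lebesgue spaces). Your retract scheme is the right skeleton for $1\le p_0<p_1<\infty$, but one step is not correct as written: the boundedness of the averaging retraction $\pi$ on $L^p(M;\mathcal H)$ does not ``follow from Fubini combined with \eqref{D2}''. That computation only gives the case $p=2$. For other exponents you need a real argument: for instance, compute the adjoint and note that, by doubling, $\pi^*F(x)(y,t)=\frac{V^{1/2}(x,t)}{V(y,t)}\mathbbm{1}_{\Gamma(x)}(y,t)F(y,t)$ is pointwise comparable to $\Phi F$, so $\Vert \pi^*F(x)\Vert_{\mathcal H}\simeq \mathcal A F(x)$, and then dualize using $(T_2^p)^*=T_2^{p'}$ (the duality proposition stated just before this one); this settles $1<p<\infty$, while the endpoint $p_0=1$ needs yet another ingredient (atomic decomposition of $T_2^1$ or vector-valued Calder\'on--Zygmund theory), since pairing against $T_2^\infty$ bounds the $T_2^1$ norm only from one side in a non-reflexive situation. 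None of this is fatal, but it is precisely where the work lies, and your sketch passes over it.

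The genuine gap is the endpoint $p_1=\infty$, which is part of the statement being proved. You correctly observe that $T_2^\infty$, defined through the Carleson functional $\mathcal C$, is not complemented in $L^\infty(M;\mathcal H)$, so the retract argument breaks down; but the proposed substitute --- ``atomic decomposition of $T_2^1$ plus the duality $(T_2^1)^*=T_2^\infty$ plus a Calder\'on--Torchinsky-style analytic family applied atom by atom'' --- is not an argument. Complex interpolation of a couple cannot be verified by testing on atoms of one endpoint (an interpolation-space identity is a statement about all functions analytic in the strip with values in $T_2^{p_0}+T_2^\infty$), and duality does not transport the problem either, because $[X_0,X_1]_\theta^*=[X_0^*,X_1^*]^\theta$-type identities require hypotheses (e.g.\ reflexivity or density conditions) that fail for the couple $(T_2^1,T_2^\infty)$. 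In the Euclidean setting this endpoint case required genuinely separate arguments beyond \cite{CMS85} (factorization/duality techniques of Cohn--Verbitsky and Bernal), and in the doubling metric setting it is exactly the delicate part handled in \cite{Am14}. As it stands, your proposal would establish the proposition only for $p_1<\infty$, after repairing the boundedness of $\pi$ as above, and leaves the stated case $p_1=\infty$ unproved.
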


Next we review the atomic theory for tent spaces which was originally developed in \cite{CMS85}, and extended to the setting of spaces of 
homogeneous type in \cite{Ru07}.
\begin{defn}
A measurable function $A$ on $M \times (0,\infty )$ is said to be a $T_2^1-$atom if there exists a ball $B \in M$ such that $A$ is 
supported in $\widehat{B}$ and
\[
\int_{M\times (0,\infty )}|A(x, t)|^2 d\mu(x)\frac{dt}{t} \leq \mu^{-1}(B).
\]
\end{defn}

\begin{prop}[\cite{HLMMY11},\cite{Ru07}]
\label{decomp1}
For every element $F\in T_2^1 (M)$ there exist a sequence of numbers $\{\lambda _j\}_{j=0}^{\infty }\in l^1$ and a sequence of 
$T_2^1-$atoms $\{A_j\}_{j=0}^{\infty }$ such that
\begin{align}\label{convg}
F =\sum_{j=0}^{\infty }\lambda _j A_j \text{ in } T_2^1 (M) \text{ and a.e. in } M \times (0,\infty ).
\end{align}
Moreover, $\sum_{j=0}^{\infty }\lambda _j \approx \Vert F\Vert_{T_2^1(M)}$, where the implicit constants depend only on the 
homogeneous space properties of $M$.

Finally, if $F \in T_2^1(M) \cap  T_2^2(M)$, then the decomposition \eqref{convg} also converges in $T_2^2(M)$.
\end{prop}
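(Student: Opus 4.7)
The plan is to execute the classical Coifman--Meyer--Stein stopping-time construction, adapted to doubling metric measure spaces as in \cite{Ru07}. Since $F \in T_2^1(M)$ gives $\mathcal{A}F \in L^1(M)$, the level sets $O_k := \{x \in M : \mathcal{A}F(x) > 2^k\}$ for $k \in \mathbb{Z}$ are open with $\mu(O_k) \to 0$ as $k \to +\infty$. To create a boundary buffer I would pass to $O_k^* := \{x : \M \mathbbm{1}_{O_k}(x) > 1/2\}$, where $\M$ is the Hardy--Littlewood maximal operator on $M$, so that $O_k \subset O_k^*$ and $\mu(O_k^*) \leq C\mu(O_k)$ by weak-$(1,1)$. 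A Whitney-type decomposition (available in spaces of homogeneous type) gives a covering of $O_k^*$ by balls $\{B_{k,j}\}_j$ with bounded overlap and radii comparable to $\dist(B_{k,j}, (O_k^*)^c)$; let $\tilde B_{k,j}$ denote a fixed dilate chosen large enough that the tents $\widehat{\tilde B_{k,j}}$ cover the layer $T_k := \widehat{O_k^*} \setminus \widehat{O_{k+1}^*}$. Since $\mathcal{A}F$ is a.e.\ finite, $F$ vanishes $d\mu\,dt/t$-a.e.\ outside $\bigcup_k T_k$.

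Next, I would set $E_{k,j} := T_k \cap \widehat{\tilde B_{k,j}}$ and propose
$$A_{k,j} := \lambda_{k,j}^{-1}\, F \cdot \mathbbm{1}_{E_{k,j}}, \qquad \lambda_{k,j} := \mu(\tilde B_{k,j})^{1/2} \br{\iint_{E_{k,j}} |F|^2 \tfrac{d\mu(y)\,dt}{t}}^{1/2}.$$
By construction $\supp A_{k,j} \subset \widehat{\tilde B_{k,j}}$ and $\iint |A_{k,j}|^2 \tfrac{d\mu\,dt}{t} \leq \mu(\tilde B_{k,j})^{-1}$, so each $A_{k,j}$ is a $T_2^1$-atom. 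The heart of the proof is the size bound
$$\iint_{E_{k,j}} |F(y,t)|^2 \frac{d\mu(y)\,dt}{t} \lesssim 2^{2k}\, \mu(\tilde B_{k,j}),$$
which gives $\lambda_{k,j} \lesssim 2^k \mu(\tilde B_{k,j})$. I expect this to be the main obstacle: for $(y,t) \in T_k$, the ball $B(y,t)$ must meet $(O_{k+1}^*)^c$ in a set of measure $\gtrsim V(y,t)$, which is precisely why one passes from $O_k$ to $O_k^*$. Fubini then yields
$$\iint_{E_{k,j}} |F|^2 \tfrac{d\mu\,dt}{t} \lesssim \int_{\tilde B_{k,j} \cap (O_{k+1}^*)^c} |\mathcal{A}F(x)|^2\, d\mu(x) \leq 2^{2(k+1)} \mu(\tilde B_{k,j}),$$
the last step using $\mathcal{A}F \leq 2^{k+1}$ off $O_{k+1} \supset O_{k+1}^*$.

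For the coefficient sum, layer cake gives
$$\sum_{k,j} \lambda_{k,j} \lesssim \sum_k 2^k \mu(O_k^*) \lesssim \sum_k 2^k \mu(O_k) \simeq \|\mathcal{A}F\|_{L^1(M)} = \|F\|_{T_2^1(M)}.$$
The reverse bound $\|F\|_{T_2^1} \lesssim \sum_j |\lambda_j|$ follows since each $T_2^1$-atom $A$ with ball $B$ has $\supp \mathcal{A}A \subset B$, so Cauchy--Schwarz and the atom size axiom give $\|A\|_{T_2^1} \leq \mu(B)^{1/2} \|A\|_{T_2^2} \leq 1$. Convergence of $F = \sum_{k,j} \lambda_{k,j} A_{k,j}$ in $T_2^1$ is then immediate from absolute summability of the $\|\lambda_j A_j\|_{T_2^1}$, and a.e.\ convergence follows from the partition identity $F = \sum_{k,j} F\mathbbm{1}_{E_{k,j}}$ valid $d\mu\,dt/t$-a.e. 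Finally, when $F \in T_2^1 \cap T_2^2$, the partial sums equal $F\,\mathbbm{1}_{E^N}$ on an exhaustion $E^N \uparrow \bigcup_{k,j} E_{k,j}$; using the Fubini identity $\|G\|_{T_2^2}^2 \simeq \iint |G|^2 d\mu\,dt/t$ from \eqref{L2}, dominated convergence with integrable dominant $|F|^2\,d\mu\,dt/t$ yields $T_2^2$-convergence.
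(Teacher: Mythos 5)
The paper itself does not prove Proposition \ref{decomp1}: it quotes it from \cite{CMS85}, \cite{Ru07}, \cite{HLMMY11}, and your stopping-time construction is exactly the argument of those references (level sets of $\mathcal AF$, enlargement by the maximal operator, Whitney balls, truncation of $F$ to the layers $T_k$, the coefficient count, the reverse inequality via $\supp\mathcal A A\subset B$, and $T_2^2$-convergence by dominated convergence). So the route is the intended one; the issue is that the step you yourself call the heart of the proof is not correct as stated, and it is precisely the point where the homogeneous-type setting requires care.

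You claim that for $(y,t)\in T_k$ the ball $B(y,t)$ meets $(O_{k+1}^*)^c$ in measure $\gtrsim V(y,t)$. Two problems. First, the maximal-function trick controls the local density of the \emph{unstarred} set $O_{k+1}$, not of the enlarged set $O_{k+1}^*$: from a point $x_0\in B(y,t)\setminus O_{k+1}^*$ you only learn $\mu\br{B(x_0,2t)\cap O_{k+1}}\le \tfrac12\,\mu\br{B(x_0,2t)}$, and nothing prevents $(O_{k+1}^*)^c$ from having tiny measure inside $B(y,t)$. The Fubini step must therefore be run over $c\tilde B_{k,j}\cap O_{k+1}^c$ rather than over $\tilde B_{k,j}\cap (O_{k+1}^*)^c$; this costs nothing in the final bound since $\mathcal AF\le 2^{k+1}$ on $O_{k+1}^c$ (note also your inclusion is backwards: $O_{k+1}\subset O_{k+1}^*$, which is what makes your last step legitimate anyway). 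Second, even for the unstarred set the threshold $1/2$ is insufficient in a general doubling space: the computation gives $\mu\br{B(y,t)\cap O_{k+1}}\le \tfrac12\,\mu\br{B(x_0,2t)}\le \tfrac{C}{2}\,V(y,t)$, where $C$ is the constant in $\mu(B(y,3t))\lesssim V(y,t)$, which is useless unless $C<2$. The standard remedies are either to define $O_k^*=\{\mathcal M\mathbbm 1_{O_k}>1-\gamma\}$ with $\gamma$ small depending on the doubling constant, so that $\mu\br{B(y,t)\cap O_{k+1}^c}\ge \br{1-(1-\gamma)C}V(y,t)>0$, or to keep $1/2$ but accept density only in the dilated ball $B(y,3t)$ and then invoke the change-of-aperture comparability of conical functionals on spaces of homogeneous type, which is itself a nontrivial ingredient of \cite{Ru07} and \cite{Am14}. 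As written, the pivotal estimate $\iint_{E_{k,j}}|F|^2\,\frac{d\mu\,dt}{t}\lesssim 2^{2k}\mu(\tilde B_{k,j})$ is asserted rather than proved. A further small repair: your sets $E_{k,j}=T_k\cap\widehat{\tilde B_{k,j}}$ overlap in $j$, so they must be disjointified before the identity $F=\sum_{k,j}F\mathbbm 1_{E_{k,j}}$, the a.e.\ convergence, and the $T_2^2$-convergence argument can be used.
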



\medskip
\section{The molecular decomposition}
In this section, we shall prove Theorem \ref{H1equiv}. That is, under the assumptions of \eqref{doubling} and \eqref{DG},
the two $H^1$ spaces: $H_{L,\rho,\mol}^1(M)$ and $H_{L, S_h^{\rho}}^{1}(M)$, are equivalent. We denote 
\[
H_{L,\rho}^{1}(M):=H_{L, S_h^{\rho}}^{1}(M)= H_{L,\rho,\mol}^1(M).
\]

Since $H_{L,\rho,\mol}^1(M)$ and $H_{L,S_h^{\rho}}^1(M)$ are 
completions of $\mathbb H_{L,\rho,\mol}^1(M)$ and $H_{L,S_h^{\rho}}^1(M)\cap H^2(M)$, it is enough to show 
$\mathbb H_{L,\rho,\mol}^1(M)= H_{L,S_h^{\rho}}^1(M)\cap H^2(M)$ with equivalent norms. In the following, we will prove the 
two-sided inclusions seperately. Before proceeding to the proof, we first note the lemma below to prove 
$H_{L,\rho,\mol}^1(M)-L^1(M)$ boundedness of an operator, which is an analogue of Lemma 4.3 in \cite{HLMMY11}.

\begin{lem}\label{crit}
Assume that $T$ is a linear operator, or a nonnegative sublinear operator, satisfying the weak-type $(2,2)$ bound 
\begin{align}
\mu\left(\left\{x\in M:  |Tf(x)| >\eta \right\}\right) \lesssim \eta^{-2} \Vert f\Vert_2^2, ~~\forall \eta>0
\end{align}
and that for every $(1,2,\varepsilon)-$molecule $a$, we have
\begin{align}\label{ta}
\Vert Ta\Vert_{L^1}\leq C,
\end{align}
with constant $C$ independent of $a$. Then $T$ is bounded from $\mathbb H_{L,\rho,\mol}^1(M)$ to $L^1(M)$ with
\[
\Vert Tf\Vert_{L^1}\lesssim \Vert f\Vert_{\mathbb H_{L,\,\rho,\mol}^1(M)}.
\]
Consequently, by density, $T$ extends to be a bounded operator from $H_{L,\rho,\mol}^1(M)$ to $L^1(M)$.
\end{lem}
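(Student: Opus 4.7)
The plan is to reduce everything to a single molecular decomposition: given $f\in \mathbb{H}^1_{L,\rho,\mol}(M)$ with representation $f=\sum_{n=0}^\infty \lambda_n a_n$ converging in $L^2$, I will show that $\|Tf\|_{L^1}\lesssim \sum_n |\lambda_n|$, and then take the infimum over all molecular representations to conclude. The density part of the statement is a routine extension argument once the inequality on $\mathbb{H}^1_{L,\rho,\mol}$ is established.

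First I would introduce the partial sums $f_N=\sum_{n=0}^N \lambda_n a_n$, which converge to $f$ in $L^2$ by hypothesis. Since $T$ is linear, or nonnegative sublinear, and since each molecule satisfies $\|Ta_n\|_{L^1}\leq C$ by \eqref{ta}, I get the uniform bound
\[
\|Tf_N\|_{L^1}\leq \sum_{n=0}^N |\lambda_n|\,\|Ta_n\|_{L^1}\leq C\sum_{n=0}^\infty |\lambda_n|.
\]
Next, the weak-type $(2,2)$ bound on $T$ gives, for every $\eta>0$,
\[
\mu\bigl(\{x\in M: |T(f-f_N)(x)|>\eta\}\bigr)\lesssim \eta^{-2}\|f-f_N\|_2^2\longrightarrow 0,
\]
so $T(f-f_N)\to 0$ in measure. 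In the linear case this directly yields $Tf_N\to Tf$ in measure; in the nonnegative sublinear case, the two-sided inequality $|Tf_N-Tf|\leq T(f_N-f)$ (which follows from $Tg\leq Th+T(g-h)$ and $Th\leq Tg+T(h-g)$) gives the same conclusion.

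Extracting a subsequence $Tf_{N_k}\to Tf$ almost everywhere and applying Fatou's lemma,
\[
\|Tf\|_{L^1}\leq \liminf_{k\to\infty}\|Tf_{N_k}\|_{L^1}\leq C\sum_{n=0}^\infty |\lambda_n|,
\]
and taking the infimum over representations produces the desired bound $\|Tf\|_{L^1}\lesssim \|f\|_{\mathbb{H}^1_{L,\rho,\mol}(M)}$. The main subtlety --- the very reason such a criterion has to be stated at all --- is Meyer's well-known observation that uniform control on molecules is by itself not enough to deduce $H^1\!\to\! L^1$ boundedness, because one has to justify the passage from the finite sums $Tf_N$ to $Tf$. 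The weak $(2,2)$ hypothesis is precisely what supplies this missing step, by upgrading $L^2$-convergence of $f_N\to f$ to convergence in measure of $Tf_N\to Tf$ and thereby making Fatou's lemma applicable. Apart from this point, the argument is soft and does not use the specific off-diagonal scale $\rho$.
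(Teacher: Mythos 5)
Your argument is correct and is essentially the proof the paper relies on: the paper defers to Lemma 4.3 of \cite{HLMMY11}, whose proof is exactly this partial-sum argument, using the uniform molecular bound for $\Vert Tf_N\Vert_{L^1}$, the weak $(2,2)$ hypothesis to upgrade $L^2$-convergence $f_N\to f$ to convergence in measure of $Tf_N\to Tf$ (via $|Tf_N-Tf|\le T(f_N-f)$ in the sublinear case), and then Fatou along an a.e.\ convergent subsequence. No gaps; the concluding infimum over representations and the density extension are handled as in the cited source.
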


For the proof, we refer to \cite{HLMMY11}, which is also applicable here. 

\medskip

\subsection{The inclusion $\mathbb H_{L,\,\rho,\mol}^1(M) \subseteq H_{L,\,S_h^{\rho}}^1(M)\cap H^2(M)$.} 
We have the following theorem:
\begin{thm}
Let $M$ be a metric measure space satisfying the doubling volume property \eqref{doubling} and the heat kernel estimate \eqref{DG}. 
Then $\mathbb H_{L,\,\rho,\mol}^1(M) \subseteq H_{L,\,S_h^{\rho}}^1(M)\cap H^2(M)$ and
\[
\Vert f\Vert_{H_{L,\,S_h^{\rho}}^1(M)} \leq C \Vert f\Vert_{\mathbb H_{L,\,\rho,\mol}^1(M)}.
\]
\end{thm}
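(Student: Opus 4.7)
The natural approach is to apply Lemma~\ref{crit} to $T=S_h^{\rho}$. Its weak-$(2,2)$ hypothesis is immediate from the strong $L^2$ bound established in \eqref{L2}, so the task reduces to verifying the uniform estimate
\begin{align*}
\|S_h^{\rho}a\|_{L^1(M)}\lesssim 1
\end{align*}
for every $(1,2,\varepsilon)$-molecule $a=L^{K}b$ associated with a ball $B$ of radius $r_B$. I decompose this $L^1$-norm along the annuli $C_i(B)$ and aim to produce summable decay in $i$.

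For the first few annuli $i\le 3$, Cauchy--Schwarz combined with the $L^2$-boundedness of $S_h^{\rho}$ and the global bound $\|a\|_{L^2(M)}\lesssim V(B)^{-1/2}$ (itself obtained from \eqref{molb} at $k=K$, the relation $a=L^{K}b$, and the geometric sum $\sum_j 2^{-2j\varepsilon}$) handles these finitely many terms. For a far annulus $i\ge 4$ and $x\in C_i(B)$, I split the cone $\Gamma(x)$ at the scale $t_i:=2^{i-2}r_B$, writing $S_h^{\rho}a(x)\le S_{<}(x)+S_{>}(x)$ according to whether $t<t_i$ or $t\ge t_i$.

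For the range $t<t_i$, any $y\in\Gamma(x)$ sits at distance $\gtrsim 2^i r_B$ from $B$; after decomposing $a=\sum_j a\mathbbm{1}_{C_j(B)}$ and inserting the $L^2$ off-diagonal bound for $\rho(t)Le^{-\rho(t)L}$, which follows from \eqref{DG} in the spirit of Lemma~\ref{BK} with $u=v=2$, $k=1$, Fubini and \eqref{D2} together with the molecular $L^2$ bounds yield a factor with exponential decay in $(2^i r_B/t)^{\beta_j/(\beta_j-1)}$ that integrates to a summable contribution. For the range $t\ge t_i$, I exploit $a=L^{K}b$ via the identity
\begin{align*}
\rho(t)Le^{-\rho(t)L}a=\rho(t)^{-K}(\rho(t)L)^{K+1}e^{-\rho(t)L}b,
\end{align*}
which, combined with the spectral $L^2$-boundedness of $(\rho(t)L)^{K+1}e^{-\rho(t)L}$ and the molecular bound $\|b\|_{L^2(M)}\lesssim\rho(r_B)^{K} V(B)^{-1/2}$, yields $\|\rho(t)Le^{-\rho(t)L}a\|_{L^2(M)}\lesssim(\rho(r_B)/\rho(t))^{K}V(B)^{-1/2}$. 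Fubini, \eqref{D2}, and Cauchy--Schwarz on $C_i(B)$ then reduce $\int_{C_i(B)}S_{>}\,d\mu$ to the product of the volume factor $V(2^{i+1}B)^{1/2}V(B)^{-1/2}\lesssim 2^{i\nu/2}$ with a time integral $\bigl(\int_{t_i}^{\infty}(\rho(r_B)/\rho(t))^{2K}\,dt/t\bigr)^{1/2}$.

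\textbf{Main obstacle.} The delicate step is the evaluation of this time integral, which must be split into cases according to whether $r_B$ and $t_i$ lie below or above~$1$, i.e.\ which regime of $\rho$ governs. The slowest decay arises in the purely local regime where $\rho(t)\sim t^{\beta_1}$: here the integral is of order $2^{-2Ki\beta_1}$, so paired with the volume factor $2^{i\nu/2}$ one obtains a bound of the form $2^{i(\nu/2-K\beta_1)}$, summable in $i$ precisely under the hypothesis $K>\nu/(2\beta_1)$ built into Definition~\ref{mol}. The mixed cases crossing the threshold $t=1$ are strictly more favourable and decay at least as fast. Once the summability $\lesssim 2^{-i\delta}$ is secured, summation over $i$ completes the molecular $L^1$-bound and hence the proof.
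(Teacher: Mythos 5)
Your overall strategy is sound and rests on the same ingredients as the paper's proof (reduction via Lemma~\ref{crit} to a uniform bound $\|S_h^{\rho}a\|_{L^1}\lesssim 1$, Cauchy--Schwarz on balls $\sim 2^iB$, Fubini with \eqref{D2}, the quadratic estimate \eqref{L2}, off-diagonal bounds derived from \eqref{DG} as in Lemma~\ref{BK} with $u=v=2$, the molecular bounds \eqref{molb}, and the condition $K>\nu/(2\beta_1)$ of Definition~\ref{mol}), but it is organized differently. The paper stays in tent-space language: it sets $A(y,t)=\rho(t)Le^{-\rho(t)L}a(y)$, splits $A$ into four regions of $(y,t)$ ($A_0,\dots,A_3$, with time truncated at $2^{i+1}r_B$ over the annulus $C_i(B)$ and a separate piece $2^iB\times(2^ir_B,2^{i+1}r_B)$), and bounds each $T_2^1$ norm by its $T_2^2$ norm times a volume factor. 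You instead decompose the $L^1$ norm over the spatial annuli $C_i(B)$ and split each cone at height $t_i\simeq 2^ir_B$. Your treatment of the large-time part $t\geq t_i$ is in fact simpler than the paper's $A_2$: you need no off-diagonal estimates there, only the spectral bound plus the polynomial gain $(\rho(r_B)/\rho(t))^{K}$, and your case analysis of the time integral (worst case $2^{-2K\beta_1 i}$, mixed regimes at least as good) is correct and isolates exactly where $K>\nu/(2\beta_1)$ is used; the paper obtains the analogous decay through its $A_2$, $A_3$ estimates with an extra annular decomposition of $b$.

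Two points need attention. First, in the small-time region $t<t_i$ your claim that decomposing $a=\sum_l a\mathbbm 1_{C_l(B)}$ and inserting off-diagonal bounds ``yields a factor with exponential decay in $(2^ir_B/t)^{\beta_j/(\beta_j-1)}$'' is only true for $l$ well below $i$; for $l$ comparable to or larger than $i$ the supports are close or overlap, there is no such exponential factor, and the bare operator bound would leave a divergent $\int_0^{t_i}dt/t$. For those terms you must argue as the paper does in its $|i-l|\leq 3$ case: use the full quadratic estimate $\int_0^{\infty}\|\rho(t)Le^{-\rho(t)L}(a\mathbbm 1_{C_l(B)})\|_2^2\,\frac{dt}{t}\lesssim\|a\|_{L^2(C_l(B))}^2$ and draw the needed smallness $2^{-l\varepsilon}\simeq 2^{-i\varepsilon}$ from the molecular bound, then sum over $l\gtrsim i$. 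This is a fixable imprecision (the tools are already in your write-up), not a flaw in the architecture. Second, the statement also asserts $\mathbb H_{L,\rho,\mathrm{mol}}^1(M)\subseteq H^2(M)$, which your proposal never addresses; it is a one-line observation (each molecule lies in $R(L)$, hence finite linear combinations do, and elements of $\mathbb H_{L,\rho,\mathrm{mol}}^1$ are $L^2$ limits of these, so they lie in $\overline{R(L)}=H^2(M)$), but it is needed since $H_{L,S_h^{\rho}}^1(M)$ is built from $H^2(M)$.
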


\begin{proof}
First observe that $\mathbb H_{L,\,\rho,\mol}^1(M) \subseteq H^2(M)$. Indeed, by Definition \ref{mol}, any 
$(1,2,\varepsilon)$-molecule belongs to $R(L)$. Thus any finite linear combination of molecules belongs to 
$R(L)$. Since $f \in \mathbb H_{L,\,\rho,\mol}^1(M)$ is the $L^2(M)$ limit of finite linear combination of molecules, 
we get $f\in\overline{R(L)} = H^2(M)$.

It remains to show $\mathbb H_{L,\,\rho,\mol}^1(M) \subseteq H_{L,\,S_h^{\rho}}^1(M)$, that is, $S_h^{\rho}$ is 
bounded from $\mathbb H_{L,\,\rho,\mol}^1(M)$ to $L^1(M)$. Note that $S_h^{\rho}$ is $L^2$ bounded by spectral theory (see \eqref{L2}), it follows from Lemma \ref{crit} that it suffices to prove that, for any $(1,2,\varepsilon )$-molecule $a$, 
there exists a constant $C$ such that $\Vert S_h^{\rho} a\Vert_{L^1(M)}\leq C$. In other words, one needs to prove 
$\Vert A \Vert_{T_2^1(M)}\leq C$, where
\[
A(y,t)=\rho(t) L e^{-\rho(t)L }a(y).
\]

Assume that $a$ is a $(1,2,\varepsilon)$-molecule related to a function $b$ and a ball $B$ with radius $r$, that is, $a=L^K b$ and for every $k=0,1,\cdots, K$ and 
$i=0,1,2,\cdots$, it holds that
\[
\Vert(\rho(r)L )^k b\Vert_{L^2(C_i(B))} \leq \rho(r)2^{-i\varepsilon} \mu(2^i B)^{-1/2}.
\]

Similarly as in \cite{AMR08}, we divide $A$ into four parts:
\begin{align*}
A &=  \mathbbm 1_{2B\times (0,2r)}A +\sum_{i\geq 1}\mathbbm 1_{C_{i}(B)\times (0,r)}A
+\sum_{i\geq 1}\mathbbm 1_{C_{i}(B)\times (r,2^{i+1}r)}A
+\sum_{i\geq 1}\mathbbm 1_{2^i B\times (2^i r,2^{i+1}r)}A
\\ &=: A_0+A_1+A_2+A_3.
\end{align*}
Here $\mathbbm 1 $ denotes the characteristic function and $C_i(B)=2^{i+1}B \backslash 2^i(B)$, $i\geq 1$. 
It suffices to show that for every $j=0,1,2,3$, we have $\Vert A_j\Vert_{T_2^1}\leq C$.

Firstly consider $A_0$. Observe that 
\[
\mathcal A(A_0)(x)
=\br{\iint_{\Gamma (x)}\abs{\mathbbm 1_{2 B\times (0,2r)}(y,t)A(y,t)}^2 \frac{d\mu(y)}{V(x,t)}\frac{dt}{t}}^{1/2}
\] 
is supported on $4B$. Indeed, denote by $x_B$ be the center of $B$, then 
$d(x,x_B)\leq d(x,y)+d(y,x_B)\leq 4r$.
Also, it holds that
\begin{align*}
\norm{A_0}_{T_2^2(M)}^2 &= \norm{\mathcal A(A_0)}_{2}^2
\leq 
\int_{M} \iint_{\Gamma (x)} \abs{\rho(t) L e^{-\rho(t)L }a(y)}^2 \frac{d\mu (y)}{V(x,t)}\frac{dt}{t}d\mu(x)
\\ &\lesssim  
 \Vert a \Vert_{L^2(M)}^2
 \lesssim \mu^{-1}(B).
\end{align*}
Here the second and the third inequalities follow from \eqref{L2} and the definition of molecules respectively.
 Now applying the Cauchy-Schwarz inequality, then
\[
\Vert A_0 \Vert_{T_2^1(M)}\leq \Vert A \Vert_{T_2^2(M)} \mu(4B)^{1/2}\leq C.
\]

Secondly for $A_1$. For each $i\geq 1$, we have $\supp \mathcal A(\mathbbm 1 _{C_{i}(B)\times (0,r)}A)\subset 2^{i+2}B$. 
In fact, $d(x,x_B)\leq d(x,y)+d(y,x_B) \leq t+2^{i+1}r < 2^{i+2}r$. Then
\begin{align*}
\norm{\mathbbm 1_{C_{i}(B)\times (0,r)}A}_{T_2^2}
&= \norm{\mathcal A(\mathbbm 1_{C_{i}(B)\times (0,r)}A)}_{2}
\\ &\leq  
\br{\int_{2^{i+2}B} \iint_{\Gamma(x)} \abs{\mathbbm 1_{C_{i}(B)\times (0,r_B)}(y,t) \rho(t)L e^{-\rho(t)L} a(y)}^2
\frac{d\mu(y)}{V(x,t)}\frac{dt}{t}d\mu(x)}^{1/2}
\\ &\leq  
\br{\int_0^{r} \int_{C_{i}(B)} \abs{\rho(t)L e^{-\rho(t)L }a(y)}^2 d\mu(y)\frac{dt}{t}}^{1/2}
\\ &\leq  
\sum_{l=0}^{\infty } \br{\int_0^{r} \int_{C_{i}(B)} \abs{\rho(t)L e^{-\rho(t)L } \mathbbm 1_{C_l(B)}a(y)}^2 d\mu(y)\frac{dt}{t}}^{1/2}
\\ &=: 
\sum_{l=0}^{\infty } I_l.
\end{align*}

We estimate $I_l$ with $|i-l|>3$ and $|i-l|\leq 3$ respectively. Firstly assume that $|i-l|\leq 3$. Using \eqref{L2} again, we have
\[
I_l^2 \leq 
\int_0^\infty \int_{M}\abs{\rho (t)L e^{-\rho(t)L} \mathbbm 1_{C_l(B)} a(y)}^2 d\mu (y)\frac{dt}{t}
\lesssim \Vert a \Vert_{L^2(C_l(B))}^2
\lesssim   2^{-2 i\varepsilon} \mu^{-1}(2^i B).
\]

Assume now $|i-l|>3$. Note that $\dist(C_l(B),C_{i}(B))\geq c 2^{\max\{l,i\}}r_B \geq c 2^{i}r_B$. Then it follows from 
Lemma \ref{BK} that
\begin{align}\label{Il}
\begin{split}
I_l^2 &\leq
\int_0^{r} \exp\br{-c\br{\frac{\rho(2^i r)}{\rho(t)}}^{\frac{\beta_2}{\beta_2-1}}} \norm{a}_{L^2(C_l(B))}^2 d\mu(y)\frac{dt}{t}
\\ &\lesssim 
2^{-2l\varepsilon}\mu^{-1}(2^l B) \int_0^{r}\br{\frac{\rho(t)}{\rho(2^i r)}}^{c} \frac{dt}{t}
\lesssim
 2^{-ci} 2^{-2l\varepsilon}\mu^{-1}(2^i B).
\end{split}
\end{align}
The last inequality follows from \eqref{D1}.
 
It follows from above that
\begin{align*}
\norm{\mathbbm 1_{C_{i}(B)\times (0,r)}A}_{T_2^2}
\lesssim 
\sum_{l:|l-i|\leq 3}2^{-i\varepsilon}\mu^{-1/2}(2^i B)+\sum_{l:|l-i|>3}2^{-ic} 2^{-l\varepsilon}\mu^{-1/2}(2^i B)
\lesssim  2^{-ic}\mu^{-1/2}(2^i B),
\end{align*}
where $c$ depends on $\varepsilon, M$. Therefore
\[
\norm{A_1}_{T_2^1}
\leq  \sum_{i\geq 1} \norm{\mathbbm 1_{C_{i}(B)\times (0,r)}A}_{T_2^2}\mu^{1/2}(2^{i+2}B)
\lesssim \sum_{i\geq 1}2^{-ic}\leq C.
\]

We estimate $A_2$ in a similar way as before except that we replace $a$ by $L^K b$.  Note that for each $i\leq 1$, we have 
$\supp \mathcal A(\mathbbm 1_{C_{i}(B)\times (r,2^{i+1}r)}A)\subset 2^{i+2}B$. Indeed, 
\[
d(x,x_B)\leq d(x,y)+d(y,x_B) \leq t+2^{i+1}r \leq 2^{i+2}r.
\] 
Then
\begin{align*}
\norm{\mathbbm 1_{C_{i}(B)\times (r,2^{i+1}r)}A}_{T_2^2} 
&= \norm{\mathcal A(\mathbbm 1_{C_{i}(B)\times (r,2^{i+1}r)}A)}_{2} 
\\ &\leq  
\br{\int_{2^{i+2} B} \iint_{\Gamma (x)}\abs{\mathbbm 1_{C_i(B)\times (r,2^i r)}(y,t)A(y,t)}^2 
\frac{d\mu(y)dt}{V(x,t)t}d\mu(x)}^{1/2}
\\ &\leq  
\br{\int_{r}^{2^{i+1}r} \int_{C_i(B)} \abs{(\rho(t)L)^{K+1} e^{-\rho(t)L}b(y)}^2 d\mu(y)\frac{dt}{t\rho^{2K}(t)}}^{1/2}
\\ &\leq  
\br{\sum_{l=0}^{\infty}\int_{r}^{2^{i+1}r} \int_{C_i(B)}\abs{(\rho(t)L)^{K+1} e^{-\rho(t)L}\mathbbm 1_{C_l(B)}b(y)}^2 
d\mu(y)\frac{dt}{t\rho^{2K}(t)}}^{1/2}
\\ &=:
\sum_{l=0}^{\infty } J_l
\end{align*}
When $|i-l|\leq 3$, by spectral theorem we get $J_l^2 \leq C 2^{-2i\varepsilon }V^{-1}(2^i B)$.
And when $|i-l|>3$, it holds $\dist(C_l(B),C_{i}(B))\geq c 2^{\max\{l,i\}}r \geq c 2^{i}r$. Then we estimate $J_l$ in the same way as for \eqref{Il},
\begin{align*}
J_l^2 
&\leq
\int_{r}^{2^{i+1}r} \exp\br{-c\br{\frac{\rho(2^i r)}{\rho(t)}}^{\frac{\beta_2}{\beta_2-1}}} \Vert b\Vert_{L^2(C_l(B))}^2 d\mu(y) \frac{d t}{t\rho^{2K}(t)}
\\ &\leq
\rho^{2K}(r)2^{-2 l\varepsilon}\mu^{-1}(2^{l+1}B)\int_{r}^{2^{i+1}r} \br{\frac{\rho(t)}{\rho(2^i r)}}^{c} \frac{dt}{t\rho^{2K}(t)}
\\ &\lesssim 
 2^{-ic}2^{-l(2\varepsilon+\nu)}\mu^{-1}(2^{i}B).
\end{align*}
Here $c$ in the second and the third lines are different. We can carefully choose $c$ in the second line to make sure that $c$ in the third line is positive. 

Hence
\[
\norm{\mathbbm 1_{C_i(B)\times (r,2^i r)}A}_{T_2^2}^2 \lesssim 2^{-ic}\mu^{-1}(2^i B),
\]
and
\[
\norm{A_2}_{T_2^1}
\leq  \sum_{i\geq 1} \norm{\mathbbm 1_{C_i(B)\times(r,2^i r)}A}_{T_2^2}\mu^{1/2}(2^{i+2}B)
\lesssim \sum_{i\geq 1}2^{-ic/2}\leq C.
\]

It remains to estimate the last term $A_3$. For each $i\geq 1$, we still have 
\[
\supp \mathcal A(\mathbbm 1_{2^i B\times (2^i r,2^{i+1}r)}A)\subset 2^{i+2}B.
\] 
Then we obtain as before that
\begin{align*}
\norm{\mathbbm 1_{2^i B\times (2^i r,2^{i+1}r)}A}_{T_2^2}
&= \norm{\mathcal A(\mathbbm 1_{2^i B\times (2^i r,2^{i+1}r)} A)}_{2}
\\ &\leq  
\br{\int_{2^{i+2}B} \iint_{\Gamma (x)}\abs{\mathbbm 1_{2^i B\times (2^i r_B,2^{i+1}r)}(y,t)A(y,t)}^2
\frac{d\mu(y) dt}{V(x,t)t}d\mu (x)}^{1/2}
\\ &\leq  
\br{\int_{2^i r}^{2^{i+1}r} \int_{2^i B}\abs{(\rho(t) L)^{K+1} e^{-\rho(t)L }b(y)}^2\frac{d\mu(y)dt}{t\rho^{2K}(t)}}^{1/2}
\\ &\leq  
\sum_{l=0}^{\infty} \br{\int_{2^i r}^{2^{i+1}r} \int_{2^i B}\abs{(\rho(t) L)^2
e^{-\rho(t)L} \mathbbm 1_{C_l(B)}b(y)}^2\frac{d\mu(y)dt}{t\rho^{2K}(t)}}^{1/2}
\\& =:  \sum_{l=0}^{\infty} K_l.
\end{align*}
In fact, due to the doubling volume property, \eqref{L2} as well as the definition of molecules, we get
\begin{align*}
K_l^2 &\leq  
\int_{2^i r}^{2^{i+1}r} \norm{\mathbbm 1_{C_l(B)}b}_{L^2}^2 \frac{dt}{t\rho^{2K}(t)}
\lesssim
\rho^{2K}(r) 2^{-2l\varepsilon}\mu^{-1}(2^l B)\int_{2^i r}^{2^{i+1}r}\frac{dt}{t\rho^{2K}(t)}
\\ &\lesssim  2^{-2l\varepsilon}2^{-ic}\mu^{-1}(2^{i}B).
\end{align*}

Hence
\[
\Vert A_3\Vert_{T_2^1}
\leq  \sum_{i\geq 1} \Vert \mathbbm 1 _{2^i B\times (2^i r,2^{i+1}r)} A\Vert_{T_2^2}\mu^{1/2}(2^{i+2}B)
\lesssim \sum_{i\geq 1}2^{-2i}\leq C.
\]

This finishes the proof.
\end{proof}


\subsection{The inclusion $H_{L,\,S_h^{\rho}}^1(M) \cap H^2(M) \subseteq \mathbb H_{L,\,\rho,\mol}^1(M)$.}
We closely follow the proof of Theorem 4.13 in \cite{HLMMY11} and get
\begin{thm}\label{cnt}
Let $M$ be a metric measure space satisfying \eqref{doubling} and \eqref{DG}. If $f \in H_{L ,S_h^{\rho}}^1(M)\cap  H^2(M)$, then there exist a sequence of numbers 
$\{\lambda _j\}_{j=0}^{\infty }\subset l^1$ and a sequence of $(1,2,\varepsilon )-$molecules $\{a_j\}_{j=0}^{\infty }$ such that $f$ 
can be represented in the form $f =\sum_{j=0}^{\infty }\lambda _j a_j$, with the sum converging in $L^2(M)$, and
\[
\Vert f\Vert_{\mathbb H_{L,\,\rho,\mol}^1(M)}\leq C\sum_{j=0}^{\infty }\lambda _j
\leq C \Vert f\Vert_{H_{L,\,S_h^{\rho}}^1(M)},
\]
where $C$ is independent of $f$. In particular,
$H_{L,\,S_h^{\rho}}^1(M)\cap H^2(M)\subseteq \mathbb H_{L,\,\rho,\mol}^1(M)$.
\end{thm}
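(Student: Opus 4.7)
The plan is to adapt the tent-space atomic decomposition strategy used in \cite[Theorem 4.13]{HLMMY11}, replacing the Euclidean scaling by the function $\rho(t)$. The starting point is a Calderón reproducing formula, which follows from the spectral theorem and the substitution $s=\rho(t)\lambda$: for any $f\in H^2(M)=\overline{R(L)}$,
\[
f = c_K\int_0^{\infty}(\rho(t)L)^K e^{-\rho(t)L}\bigl[\rho(t)L e^{-\rho(t)L}f\bigr]\,\frac{d\rho(t)}{\rho(t)}, \qquad c_K:=\frac{2^{K+1}}{K!},
\]
valid in $L^2(M)$. Since $\rho'(t)/\rho(t)\simeq 1/t$ by \eqref{der}, the function $F(y,t):=\rho(t)L e^{-\rho(t)L}f(y)$ satisfies $\norm{F}_{T_2^1(M)}=\norm{f}_{H^1_{L,S_h^\rho}(M)}$ and, by the computation in \eqref{L2}, also $F\in T_2^2(M)$.

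Next, I will apply Proposition \ref{decomp1} to decompose $F=\sum_j\lambda_j A_j$ in both $T_2^1$ and $T_2^2$, where each $A_j$ is a $T_2^1$-atom with $\supp A_j\subset\widehat{B_j}$ for some ball $B_j$ of radius $r_j$, and $\sum_j|\lambda_j|\simeq\norm{F}_{T_2^1(M)}$. Introducing the ``projection''
\[
\pi_L(G)(x):=c_K\int_0^{\infty}(\rho(t)L)^K e^{-\rho(t)L}G(\cdot,t)(x)\,\frac{d\rho(t)}{\rho(t)},
\]
the reproducing formula reads $\pi_L(F)=f$. The boundedness $\pi_L:T_2^2(M)\to L^2(M)$, which is dual to the $L^2$-boundedness of $S_h^{\rho}$ established in \eqref{L2}, then lets one pass $\pi_L$ inside the sum, producing $f=\sum_j\lambda_j a_j$ in $L^2(M)$ with $a_j:=\pi_L(A_j)$.

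The core of the argument is to verify that each $a_j$ is, up to a uniform multiplicative constant, a $(1,2,\varepsilon)$-molecule associated with $B_j$. Writing $a_j=L^K b_j$ with
\[
b_j := c_K\int_0^{r_j}\rho(t)^K e^{-\rho(t)L}A_j(\cdot,t)\,\frac{d\rho(t)}{\rho(t)}
\]
(the $t$-support of $A_j$ is inside $(0,r_j]$), the task reduces to bounding $\norm{(\rho(r_j)L)^k b_j}_{L^2(C_i(B_j))}$ by $\rho(r_j)^K 2^{-i\varepsilon} V(2^iB_j)^{-1/2}$ for $0\le k\le K$ and $i\ge 0$. I will test against $g\in L^2(C_i(B_j))$ with $\norm{g}_2\le 1$, use Fubini and self-adjointness of $L$, then apply Cauchy–Schwarz on $\widehat{B_j}$:
\[
|\langle(\rho(r_j)L)^k b_j,g\rangle|\lesssim \rho(r_j)^k\mu(B_j)^{-1/2}\Bigl(\int_0^{r_j}\!\rho(t)^{2(K-k)}\norm{\mathbbm1_{B_j}(\rho(t)L)^k e^{-\rho(t)L}\mathbbm1_{C_i(B_j)}}_{2\to 2}^2\,\frac{d\rho(t)}{\rho(t)}\Bigr)^{1/2}.
\]
The $A_j$ factor has been absorbed using the $T_2^1$-atom bound $\int_{\widehat{B_j}}|A_j|^2\,d\mu\,dt/t\le\mu(B_j)^{-1}$; Lemma \ref{BK} with $u=v=2$ then supplies exponential off-diagonal decay of the form $\exp(-c(2^i r_j/t)^{\beta/(\beta-1)})$, and a routine computation of the remaining $t$-integral yields the required molecular bound.

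The main obstacle is the two-regime character of $\rho$: for balls with $r_j$ straddling $1$, the integral on $(0,r_j]$ crosses from $\rho(t)=t^{\beta_1}$ to $\rho(t)=t^{\beta_2}$, so the estimate must be split at $t=1$ and a different exponential off-diagonal factor handled in each piece. The doubling-induced loss $2^{i\nu}$ from Lemma \ref{BK} must then be absorbed by the exponential decay; when converting $\exp(-c(2^ir_j/t)^{\beta/(\beta-1)})$ into polynomial decay in $2^i$, the smaller exponent $\beta_1$ governs the worst case, which is exactly why Definition \ref{mol} requires $K>\nu/(2\beta_1)$. Once this bookkeeping is carried out, the molecular decomposition $f=\sum_j\lambda_j a_j$ together with $\sum_j|\lambda_j|\lesssim \norm{f}_{H^1_{L,S_h^\rho}(M)}$ gives the claimed inclusion and norm estimate.
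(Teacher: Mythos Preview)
Your proposal is correct and follows essentially the same route as the paper: the Calder\'on reproducing formula with $(\rho(t)L)^K e^{-\rho(t)L}$, the tent-space atomic decomposition of $F(\cdot,t)=\rho(t)Le^{-\rho(t)L}f$ via Proposition~\ref{decomp1}, the projection $\pi_L:T_2^2\to L^2$, and the duality argument with $g\in L^2(C_i(B_j))$ combined with Lemma~\ref{BK} to verify the molecular bounds are exactly the paper's steps. Your remark about the two-regime splitting at $t=1$ is more explicit than the paper (which handles it by dominating the exponential by a single polynomial factor $(t/2^i r_B)^{\varepsilon+\nu}$ that works in both regimes), but the substance is identical.
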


\begin{proof} For $f \in H_{L,\,S_h^{\rho}}^1(M)\cap  H^2(M)$, denote
$F(x,t)=\rho(t)L e^{-\rho(t)L}f(x)$. Then by the definition of $H_{L ,S_h^{\rho}}^1(M)$, we have $F\in T_2^1(M) \cap 
T_2^2(M)$.

From Theorem \ref{decomp1}, we decompose $F$ as $F =\sum_{j=0}^{\infty }\lambda _j A_j$, where
$\{\lambda _j\}_{j=0}^{\infty }\in l^1$, $\{A_j\}_{j=0}^{\infty }$ is a sequence of $T_2^1-$atoms
supported in a sequence of sets $\{\widehat B_j\}_{j=0}^{\infty }$, and the sum converges in both $T_2^1(M)$ and $T_2^2(M)$. Also
\[
\sum_{j=0}^{\infty }\lambda _j \lesssim \Vert F\Vert_{T_2^1(X)}= \Vert f\Vert_{H_{L ,S_h^{\rho}}^1(M)}.
\]

For $f\in H^2(M)$, by functional calculus, we have the following ``Calder\'on reproducing formula"
\[
f = C\int_0^{\infty } (\rho(t)L)^{K+1} e^{-2\rho (t)L}f \frac{\rho'(t)dt}{\rho(t)}
  = C\int_0^{\infty } (\rho(t)L)^K e^{-\rho(t)L}F(\cdot ,t) \frac{\rho'(t)dt}{\rho(t)}
  =: C \pi _{h,L} (F).
\]

Denote $a_j=C\pi _{h,L} (A_j)$, then $f =\sum_{j=0}^{\infty }\lambda _j a_j$. Since for  $F\in T_2^2(M)$, we have $\Vert \pi _{h,
L} (F)\Vert_{L^2(M)}\leq C \Vert F\Vert_{T_2^2(M)}$. Thus we learn from Lemma 4.12 in \cite{HLMMY11} that the sum also 
converges in $L^2(M)$. 

We claim that $a_j, j=0,1,...,$ are $(1,2,\varepsilon )-$molecules up to multiplication to some uniform constant. 

Indeed, note that $a_j=L^K b_j$, where 
$$b_j=C\int_0^{\infty } \rho^K(t) e^{-\rho(t)L}A_j(\cdot ,t) \frac{\rho'(t)dt}{\rho(t)}.$$
Now we estimate the norm $\Vert (\rho(r_{B_j})L )^k b_j\Vert_{L^2(C_i(B))}$, where $r_{B_j}$ is the radius of $B_j$. For simplicity we ignore the index $j$.
Consider any function $g\in L^2(C_i(B))$ with $\Vert g\Vert_{L^2(C_i(B))}=1$, then for $k=0,1, \cdots, K$,
\begin{align*}
& \left|\int_M (\rho(r_B)L )^k b(x)g(x)d\mu (x)\right|
\\ & \lesssim  
 \abs{\int_{M}\br{\int_0^{\infty }(\rho(r_B)L )^k \rho^K (t) e^{-\rho (t)L}(A_j(\cdot ,t))(x)\frac{\rho'(t)dt}{\rho(t)}}g(x)d\mu (x)}
\\  &= 
 \abs{\int_{\widehat B}\br{\frac{\rho(r_B)}{\rho(t)}}^k \rho^K(t) A_j(x ,t)(\rho (t)L)^k e^{-\rho(t)L}g(x)d\mu(x) \frac{\rho'(t)dt}{\rho(t)}}
\\ &\lesssim 
 \br{\int_{\widehat B}\abs{A_j(x,t)}^2 d\mu (x)\frac{dt}{t}}^{1/2}
\br{\int_{\widehat B}\abs{\br{\frac{\rho(r_B)}{\rho (t)}}^k \rho^K(t) (\rho(t)L)^k e^{-\rho (t)L}g(x)}^2 d\mu (x) \frac{dt}{t}}^{1/2}.
\end{align*}
In the last inequality, we apply H\"older inequality as well as \eqref{der}.

We continue to estimate by using the definition of $T_2^1-$atoms and the off-diagonal estimates of heat kernel. 

For $i=0,1$, the above quantity is dominated by
\[
 \mu^{-1/2}(B) \rho(r_B) \left(\int_{\widehat B}\left|(\rho (t)L )^k e^{-\rho (t)L}g(x)\right|^2 d\mu (x)\frac{dt}{t}\right)^{1/2}
\lesssim \mu^{-1/2}(B) \rho(r_B) .
\]
Next for $i\geq 2$, the above estimate is controlled
\begin{align*}
&  \mu^{-1/2}(B) \br{\int_0^{r_B}\br{\frac{\rho(r_B)}{\rho (t)}}^{2 k}\rho^{2K}(t)
\norm{(\rho(t)L)^k e^{-\rho(t)L}g}_{L^2(B)}^2\frac{dt}{t}}^{1/2}
\\ & \lesssim 
 \mu^{-1/2}(B) \br{\int_0^{r_B}\br{\frac{\rho(r_B)}{\rho(t)}}^{2 k} \rho^{2K}(t) \exp\br{-c\br{\frac{2^i r_B}{t}}^\tau} \frac{dt}{t}}^{1/2}
\\ & \lesssim 
 \mu^{-1/2}(B) 
\br{\int_0^{r_B}\br{\frac{\rho (r_B)}{\rho(t)}}^{2k}\rho^{2K}(t) \br{\frac{t}{2^i r_B}}^{\varepsilon +\nu}\frac{dt}{t}}^{1/2}
\\ & \lesssim  
 \mu^{-1/2}(2^i B) \rho^K(r_B) 2^{-i \varepsilon }.
\end{align*}
In the first inequality, we use Lemma \ref{BK}. Since $k=0, 1, \cdots, K$, the last inequality always holds for any $\varepsilon >0$.

Therefore,
\begin{align*}
\Vert (\rho (r_{B})L )^k b\Vert_{L^2(C_i(B))}
& =  \sup_{\Vert g\Vert_{L^2(C_i(B))}=1} \abs{\int_M (\rho(r_B)L )^k b(x)g(x)d\mu (x)}
\\ &\lesssim \mu^{-1/2}(2^i B) \rho^K(r_B)2^{-i \varepsilon }.
\end{align*}
\end{proof}

\section{Comparison of Hardy spaces and Lebesgue spaces}

In this section, we will study the relations between $L^p(M)$, $H_{L,\,S_h^{\rho}}^p(M)$ and $H_{L,\,S_h}^p(M)$ under the 
assumptions of \eqref{doubling} and \eqref{DG'}. We first show that $L^p(M)$ and $H_{L,\,S_h^{\rho}}^p(M)$ are 
equivalent. Next we give some examples such that $L^p(M)$ and $H_{L,\,S_h}^p(M)$ are not equivalent. More precisely, the inclusion $L^p\subset H_{L,\,S_h}^p$ may be false for $1<p<2$. 


\subsection{Equivalence of $L^p(M)$ and $H_{L,\,S_h^{\rho}}^p(M)$ for $p_0<p<p_0'$}

We will prove Theorem \ref{equihl}. That is, if $M$ satisfies \eqref{doubling} and \eqref{DG'}, then $H_{L,\,S_h^{\rho}}^p(M)=L^p(M)$ for $p_0<p<p_0'$.

Our main tool is the Calder\'on-Zygmund decomposition (see for example \cite[Corollaire 2.3]{CW71}).
\begin{thm}\label{C-Z}
Let $(M,d,\mu)$ be a metric measured space satisfying the doubling volume property. Let $1\leq q\leq \infty$ and $f\in L^q$. 
Let $\lambda>0$. Then there exists a decomposition of $f$, $f=g+b=g+\sum_i b_i $ so that

\begin{enumerate}
\item $|g(x)|\leq C\lambda$  for almost all $x\in M$;

\item There exists a sequence of balls $B_i =B(x_i,r_i)$ so that each $b_i$ is supported in $B_i$,
\[
\int| b_i(x)|^q d\mu(x)\leq C\lambda^q \mu(B_i)
\]

\item $\sum_i \mu(B_i)\leq \frac{C}{\lambda^q} \int|f(x)|^q d\mu(x)$;

\item $\Vert b\Vert_q \leq C\Vert f\Vert_q$ and $\Vert g\Vert_q \leq C\Vert f\Vert_q$;

\item There exists $k\in \mathbb{N}^*$ such that each $x\in M$ is contained in at most $k$ balls $B_i$.
\end{enumerate}
\end{thm}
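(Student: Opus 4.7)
The plan is to derive the decomposition from the level set of an appropriate maximal function, following the classical Calder\'on--Zygmund strategy adapted to spaces of homogeneous type. First, I would work with the uncentered Hardy--Littlewood maximal operator, denoted $\mathcal M$, on $(M,d,\mu)$, which thanks to the doubling property \eqref{doubling} and the standard Vitali-type $5r$-covering lemma is of weak type $(1,1)$. For $f \in L^q$ with $1 \le q < \infty$, the $q$-power variant $\mathcal M_q f := \bigl(\mathcal M(|f|^q)\bigr)^{1/q}$ is then of weak type $(q,q)$. Fix a sufficiently large constant $C_0$ and set
\[
\Omega_\lambda := \{x \in M : \mathcal M_q f(x) > C_0 \lambda\}.
\]
Property (3) is immediate from the weak-type estimate applied at level $C_0\lambda$.

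The next step is a Whitney-type decomposition of the open set $\Omega_\lambda$: the $5r$-covering lemma combined with doubling yields balls $B_i = B(x_i, r_i)$ with $\Omega_\lambda = \bigcup_i B_i$, with $r_i \simeq \dist(x_i, \Omega_\lambda^c)$, and with controlled dilates $\kappa B_i$ enjoying bounded overlap---this delivers property (5). Disjointify by $E_i := B_i \setminus \bigcup_{j<i} B_j$, and set $b_i := f\chi_{E_i}$, $b := f\chi_{\Omega_\lambda} = \sum_i b_i$, and $g := f - b = f\chi_{\Omega_\lambda^c}$. Property (1) then follows from Lebesgue's differentiation theorem on the doubling space: for a.e.\ $x \in \Omega_\lambda^c$ one has $|f(x)| \le \mathcal M_q f(x) \le C_0 \lambda$. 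Property (4) is trivial from the disjointness of the two pieces: $\|g\|_q^q + \|b\|_q^q = \|f\|_q^q$.

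The heart of the argument is property (2), which rests on the Whitney property of the covering. Each $B_i$ admits a companion point $y_i \in \Omega_\lambda^c$ at distance $\lesssim r_i$, so $B_i \subset B(y_i, \gamma r_i)$ for some geometric $\gamma$, and by doubling $\mu(B(y_i,\gamma r_i)) \simeq \mu(B_i)$. Therefore
\[
\frac{1}{\mu(B_i)} \int_{B_i} |f|^q \, d\mu
\;\lesssim\; \frac{1}{\mu(B(y_i,\gamma r_i))} \int_{B(y_i,\gamma r_i)} |f|^q \, d\mu
\;\le\; (\mathcal M_q f(y_i))^q \;\le\; (C_0\lambda)^q,
\]
so $\int|b_i|^q\,d\mu \le \int_{B_i}|f|^q\,d\mu \lesssim \lambda^q \mu(B_i)$, as required.

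The main obstacle---and essentially the only genuinely geometric content of the argument---is the construction of a Whitney decomposition with the bounded-overlap property in this abstract metric setting, which is precisely where \eqref{doubling} is indispensable. Once that covering lemma is in place, the remaining verifications reduce to routine bookkeeping with the maximal function and Lebesgue differentiation; this is why the statement can simply be imported from \cite{CW71}.
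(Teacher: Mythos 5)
Your argument is correct and is essentially the canonical proof: the paper itself gives no proof of this theorem, importing it directly from \cite[Corollaire 2.3]{CW71}, whose argument runs exactly along your lines (level set of the uncentered maximal function of $|f|^q$, Whitney-type covering of $\Omega_\lambda$ with bounded overlap via the $5r$-covering lemma and doubling, $b_i=f\chi_{E_i}$ with the average on each Whitney ball controlled through a nearby point of $\Omega_\lambda^c$, and Lebesgue differentiation for the good part). The only points left implicit are the degenerate cases ($q=\infty$, and $\Omega_\lambda=M$ when $\mu(M)<\infty$), which are handled trivially and are irrelevant to the paper's application, where $q=p_0<2$ and $M$ is non-compact.
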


\medskip
 
\begin{proof}[Proof of Theorem \ref{equihl}]
Due to the self-adjointness of $L$ in $L^2(M)$, we get
$L^2(M)=\overline{R(L )}\bigoplus N(L)$, where the sum is orthogonal. Under the assumptions \eqref{doubling} and \eqref{DG'}, 
we have $N(L)=0$ and thus $H^2(M)=L^2(M)$.  Indeed, for any $f\in N(L)$, it holds
\[
e^{-\rho(t)L}f-f=\int_0^{\rho(t)} \frac{\partial}{\partial s}e^{-sL}fds=-\int_0^{\rho(t)} L e^{-sL}fds=0,
\]
 As a consequence of Lemma \ref{BK'}, we have that for all $x\in M$ and $t\geq 0$,
\begin{eqnarray*}
\br{\int_{B(x,t)} |f|^{p_0'} }^{1/p_0'}
=\norm{e^{-\rho(t)L}f}_{L^{p_0'}(B(x,t))}  \lesssim V(x,t)^{\frac{1}{p_0'}-\frac{1}{2}}\Vert f\Vert_{L^2(B(x,t))}.
\end{eqnarray*}
Now letting $t\rightarrow \infty$, we obtain that $f=0$.
 
It suffices to prove that for any $f\in R(L)\cap L^p(M)$ with $p_0<p<p_0'$,
\begin{align}\label{equi}
\Vert S_h^{\rho} f\Vert_{L^p}\lesssim \Vert f\Vert_{L^p}.
\end{align}
With this fact at hand, we can obtain by duality that $\Vert f\Vert_{L^p} \leq C \Vert S_h^{\rho} f\Vert_{L^p}$ for $p_0<p< p_0'$. 

Indeed, for $f \in R(L)$, write the identity
\[
f = C\int_0^{\infty }(\rho(t)L )^2 e^{-2\rho(t)L}f\frac{\rho'(t)dt}{\rho(t)},
\]
where the integral $C\int_{\varepsilon}^{1/\varepsilon}(\rho(t)L )^2 e^{-2\rho (t)L}f\frac{\rho'(t)dt}{\rho(t)}$ converges to $f$ 
in $L^2(M)$ as $\varepsilon\rightarrow 0$.

Then for $f\in R(L)\cap L^p(M)$, we have
\begin{align*}
\Vert f\Vert_{L^p}
& =\sup_{\norm{g}_{L^{p'}}\leq 1}|<f,g>|
\simeq \sup_{\norm{g}_{L^{p'}}\leq 1}\abs{\iint_{M\times (0,\infty )} F(y,t)G(y,t)d\mu(y)\frac{\rho'(t)dt}{\rho(t)}}
\\& \simeq
\sup_{\norm{g}_{L^{p'}}\leq 1} \abs{\int_M \iint_{\Gamma (x)} F(y,t)G(y,t)\frac{d\mu(y)}{V(x,t)}\frac{\rho'(t)dt}{\rho(t)}d\mu(x)}
\\&\lesssim
 \sup_{\norm{g}_{L^{p'}}\leq 1} \norm{F}_{T_2^p} \norm{G}_{T_2^{p'}}
\simeq \sup_{\norm{g}_{L^{p'}}\leq 1} \norm{S_h f}_{L^p} \norm{S_h g}_{L^{p'}}
\\&\lesssim
 \sup_{\norm{g}_{L^{p'}}\leq 1} \norm{S_h f}_{L^p} \norm{g}_{L^{p'}}
=\norm{S_h f}_{L^p}.
\end{align*}
Here $F(y,t)=\rho(t)L e^{-\rho(t)L}f(y)$ and $G(y,t)=\rho(t)L e^{-\rho(t)L} g(y)$. The second line's equivalence is 
due to the doubling volume propsimeqerty.

By an approximation process, the above argument holds for $f\in L^p(M)$.

For $p>2$, the $L^p$ norm of the conical square function is controlled by its vertical analogue (for a reference, see \cite{AHM12}, 
where the proof can be adapted to the homogenous setting), which is always $L^p$ bounded for $p_0<p<p_0'$ by 
adapting the proofs in \cite{Bl07} and \cite{CDMY96} (if $\{e^{-tL}\}_{t>0}$ is a symmetric Markov semigroup, then it is $L^p$
bounded for $1<p<\infty$, according to \cite{St70}). Hence 
\eqref{equi} holds. 

It remains to show \eqref{equi} for $p_0<p< 2$.

In the following, we will prove the weak $(p_0,p_0)$ boundedness of $S_h^{\rho}$ by using the Calder\'on-Zygmund decomposition. 
Since $S_h^{\rho}$ is also $L^2$ bounded as shown in \eqref{L2}, then by interpolation, \eqref{equi} holds for every $p_0<p<2$. 
The proof is similar to \cite[Proposition 6.8]{Au07} and \cite[Theorem 3.1]{AHM12}, which originally comes from \cite{DM99}.

We take the Calder\'on-Zygmund decomposition of $f$ at height $\lambda $, that is, $f=g+\sum b_i$ with 
$\supp b_i\subset B_i$. Since $S_h^{\rho}$ is a sublinear operator, write
\begin{align*}
S_h^{\rho} \left(\sum_i b_i \right)
&= S_h^{\rho} \br{\sum_i  \br{I-\br{I-e^{-\rho (r_i)}}^N+\br{I-e^{-\rho (r_i)L }}^N}b_i}
\\ &\leq
S_h^{\rho} \br{\sum_i \br{I-\br{I-e^{-\rho (r_i)}}^N} b_i}+S_h^{\rho} \br{\sum_i \br{I-e^{-\rho (r_i)}}^N b_i }.
\end{align*}
Here $N \in \mathbb N$ is chosen to be larger than $2\nu /\beta_1$, where $\nu$ is as in \eqref{D1}.

Then it is enough to prove that
\[
\begin{split}
& \mu \br{\left\{x\in M: S_h^{\rho}(f)(x)>\lambda \right\}}
\leq  \mu \br{\left\{x\in M: S_h^{\rho}(g)(x)>\frac{\lambda}{3}\right\}}
\\ &+\mu \br{\left\{x\in M: S_h^{\rho} \br{\sum_i \br{I-\br{I-e^{-\rho (r_i)}}^N} b_i} (x)>\frac{\lambda}{3}\right\}}
\\& + \mu \br{\left\{x\in M: S_h^{\rho} \br{\sum_i \br{I-e^{-\rho (r_i)L }}^N b_i}(x)>\frac{\lambda}{3} \right\}}
\lesssim \frac{1}{\lambda^{p_0}}\int |f(x)|^{p_0}d\mu(x).
\end{split}
\]

We treat $g$ in a routine way. Since $S_h^{\rho}$ is $L^2$ bounded as shown in \eqref{L2}, then 
\[
\mu \br{\left\{x\in M: S_h^{\rho}(g)(x)>\frac{\lambda}{3}\right\}}
\lesssim \lambda^{-2}\norm{g}_2^2 \lesssim \lambda^{-p_0}\norm{g}_{p_0}
\lesssim \lambda^{-p_0}\Vert f\Vert_{p_0}.
\]

Now for the second term. Note that $I-\br{I-e^{-\rho (r_i)L}}^N = \sum_{k=1}^{N} (-1)^{k+1} \binom{N}{k} e^{-k\rho (r_i)L}$, it is enough to show that for every $1 \leq k \leq N$, 
\begin{align}\label{k}
\mu \br{\left\{x\in M: S_h^{\rho} \left( \sum_i e^{-k\rho (r_i)L} b_i \right)(x)>\frac{\lambda}{3N} \right\}}
\lesssim \frac{1}{\lambda^{p_0}} \int |f(x)|^{p_0} d\mu(x).
\end{align}

Note the following slight improvement of \eqref{DG2'}: for every $1 \leq k\leq N$ and for every $j\geq 1$, we have 
\begin{align}\label{off}
\norm{e^{-k \rho(r_i)L}b_i}_{L^2(C_j(B_i))} \lesssim \frac{2^{j\nu}}{\mu^{\frac1{p_0}-\frac12}(B_i)} e^{-c_k 2^{j\tau(k \rho(r_i))}}
\norm{b_i}_{L^{p_0}(B_i)}.
\end{align}
Here $\tau(r)=\beta_1/(\beta_1-1)$ if $0<r<1$, otherwise $\tau(r)=\beta_2/(\beta_2-1)$. Indeed, it is obvious for $r_i\ge 1$ and $0<r_i <k^{-\frac1{\beta_1}}$. For $k^{-\frac1{\beta_1}} \le r_i<1$, that is, $k\rho(r_i)\ge 1$, then $\br{\frac{(2^j r_i)^{\beta_2}}{k\rho(r_i)}}^{\frac{1}{\beta_2-1}} \simeq 2^{j\frac{\beta_2}{\beta_2-1}}=2^{j\tau(k \rho(r_i))}$.

With the above preparations, we can show \eqref{k} now. Write
\[
\mu \br{\left\{x: \abs{S_h^{\rho} \br{\sum_{i}e^{-k\rho(r_i)L }b_i} (x)}>\frac{\lambda}{3N} \right\}}
\lesssim \frac{1}{\lambda^2}\norm{\sum_{i}e^{-k\rho(r_i)L }b_i}_2^2
\]

By a duality argument, 
\begin{align*}
\norm{\sum_{i} e^{-k\rho(r_i)L} b_i}_2 
&= \sup_{\norm{\phi}_2=1} \int_{M} \abs{\sum_{i} e^{-k\rho(r_i)L} b_i} |\phi| d\mu
\leq \sup_{\norm{\phi}_2=1} \sum_{i} \sum_{j=1}^{\infty} \int_{C_j(B_i)} \abs{e^{-k\rho(r_i)L} b_i} |\phi| d\mu
\\ &=: \sup_{\norm{\phi}_2=1} \sum_{i} \sum_{j=1}^{\infty} A_{ij}.
\end{align*}

Applying Cauchy-Schwarz inequality, \eqref{off} and \eqref{D1}, we get
\begin{align*}
A_{ij} &\leq \norm{e^{-k\rho(r_i)L} b_i}_{L^2(C_j(B_i))} \norm{\phi}_{L^2(C_j(B_i))} 
\\ &\lesssim
2^{\frac{3j\nu}{2}} e^{-c 2^{j\tau(k \rho(r_i))}}\mu(B_i) \br{\frac{1}{\mu(B_i)}\int_{B_i} |b_i|^{p_0} d\mu}^{\frac{1}{p_0}} \inf_{y \in B_i}\br{\mathcal M\br{|\phi|^2}(y)}^{1/2}
\\ &\lesssim 
e^{-c 2^{j\tau(k \rho(r_i))}} \mu(B_i) \inf_{y \in B_i}\br{\mathcal M\br{|\phi|^2}(y)}^{1/2}.
\end{align*}
Here $\mathcal M$ denotes the Hardy-Littlewood maximal operator:
\[
\mathcal Mf(x)= \sup_{B\ni x} \frac 1{\mu(B)} \int_B |f(x)| d\mu(x),
\]
where $B$ ranges over all balls containing $x$. 

Then
\begin{align*}
\norm{\sum_{i} e^{-k\rho(r_i)L} b_i}_2 
&\lesssim
 \lambda \sup_{\norm{\phi}_2=1} \sum_{i}\sum_{j=1}^{\infty} e^{-c 2^{j\tau(k \rho(r_i))}} 
\mu(B_i) \inf_{y \in B_i}\br{M\br{|\phi|^2}(y)}^{1/2} 
\\ &\lesssim
 \lambda \sup_{\norm{\phi}_2=1}\int \sum_{i} \mathbbm 1_{B_i}(y) \br{\mathcal M\br{|\phi|^2}(y)}^{1/2} d\mu(y)
\\ &\lesssim
 \lambda \sup_{\norm{\phi}_2=1}\int_{\cup_{i}B_i}  \br{\mathcal M\br{|\phi|^2}(y)}^{1/2} d\mu(y)
\\ &\lesssim
 \lambda \mu^{1/2} \br{\cup_{i}B_i} \lesssim \lambda^{1-p_0/2} \br{\int |f|^{p_0} d\mu}^{1/2}.
\end{align*}
The third inequality is due to the finite overlap of the Calder\'on-Zygmund decomposition. In the last line, for the first inequality,
we use Kolmogorov's inequality (see for example \cite[page 91]{Gra08}).  

Therefore, we obtain
\begin{align}\label{S1}
\mu \br{\left\{x: \abs{S_h^{\rho} \br{\sum_{i}e^{-k\rho(r_i)L}b_i}(x)}>\frac{\lambda}{3N}\right\}} 
\lesssim \frac1{\lambda^{p_0}} \int |f|^{p_0} d\mu.
\end{align}

For the third term, we have
\begin{align*}
& \mu \br{\left\{x\in M: S_h^{\rho} \br{\sum_i \br{I-e^{-\rho (r_i)L }}^N b_i}(x)>\frac{\lambda}{3} \right\}}
\\ \leq&
\mu\br{\cup_j 4B_j}+\mu \br{\left\{x\in M\setminus \cup_j 4B_j: S_h^{\rho} \br{\sum_i \br{I-e^{-\rho(r_i)L}}^N b_i}(x)
>\frac{\lambda}{3} \right\}}.
\end{align*}

From the Calder\'on-Zygmund decomposition and doubling volume property, we get
\[
\mu\left(\cup_j 4B_j\right) \leq  \sum_j \mu(4B_j) \lesssim \sum_j \mu(B_j) \lesssim \frac{1}{\lambda^{p_0}}\Vert f\Vert_{p_0}.
\]

It remains to show that
\begin{align*}
\Lambda := \mu \left(\left\{x\in M\setminus \cup_j 4B_j: S_h^{\rho} \left(\sum_i \left(I-e^{-\rho(r_i)L } \right)^N b_i \right)(x) > 
\frac{\lambda}{3}\right\}\right)
\lesssim \frac{1}{\lambda^{p_0}} \int |f(x)|^{p_0} d\mu(x).
\end{align*}

As a consequence of the Chebichev inequality, $\Lambda$ is dominated by
\begin{align*}
& \frac{9}{\lambda^2 }\int_{ M\setminus \cup_j 4 B_j} 
\left (S_h^{\rho} \left(\sum_i  \left(I-e^{-\rho(r_i)L} \right)^N b_i \right)(x) \right )^2 d\mu (x)
\\ \leq &
\frac{9}{\lambda^2 }\int_{ M\setminus \bigcup_j 4 B_j} \iint_{\Gamma (x)}
\left (\sum_i \rho(t) L e^{-\rho(t)L }  \left(I-e^{-\rho(r_i)L} \right)^N b_i(y)\right )^2
\frac{d\mu(y)}{V(x,t)}\frac{dt}{t} d\mu (x)
\\ \leq & 
\frac{18}{\lambda^2 }\int_{ M\setminus \cup_j 4 B_j}\iint_{\Gamma (x)}
\left (\sum_i \mathbbm 1 _{2 B_i}(y) \rho(t) L e^{-\rho(t)L}  \left(I-e^{-\rho(r_i)L} \right)^N b_i(y)\right)^2
\frac{d\mu(y)}{V(x,t)}\frac{dt}{t}d\mu (x)
\\&+
\frac{18}{\lambda^2 }\int_{ M\setminus \cup_j 4 B_j}\iint_{\Gamma (x)} \br{\sum_i \mathbbm 1 _{M\setminus 2 B_i}(y) \rho(t) L e^{-
\rho(t)L } \br{I-e^{-\rho(r_i)L}}^N b_i(y)}^2 \frac{d\mu(y)}{V(x,t)}\frac{dt}{t}d\mu (x)
\\ =:&  \frac{18}{\lambda^2 }(\Lambda _{loc}+\Lambda _{glob}).
\end{align*}

For the estimate of $\Lambda_{loc}$. Due to the bounded overlap of $2B_i$, we can put the sum of $i$ out of the square up 
to a multiplicative constant. That is,
\begin{align*}
 \Lambda _{loc}
&\lesssim \sum_i \int_{ M\setminus \cup_j 4B_j} \int_{0}^{\infty}\int_{B(x,t)}
\br{\mathbbm 1_{2 B_i}(y) \rho(t) L e^{-\rho(t)L} \br{I-e^{-\rho(r_i)L}}^N b_i(y)}^2 \frac{d\mu(y)}{V(x,t)}\frac{dt}{t}d\mu(x)
\\ &\lesssim
 \sum_i \int_{ M\setminus \cup_j 4B_j}\int_{2 r_i}^{\infty} \int_{B(x,t)}
\br{\mathbbm 1_{2 B_i}(y) \rho(t)L e^{-\rho(t)L} \br{I-e^{-\rho(r_i)L}}^N b_i(y)}^2 \frac{d\mu(y)}{V(x,t)}\frac{dt}{t}d\mu(x)
\\  &\lesssim
 \sum_i \int_{2 r_i}^{\infty} \int_{M} \br{\int_{B(y,t)} \frac{d\mu(x)}{V(x,t)}}
\br{\mathbbm 1_{2 B_i}(y) \rho(t) L e^{-\rho(t)L}  \br{I-e^{-\rho(r_i)L}}^N b_i(y)}^2 d\mu (y)\frac{dt}{t}
\\  &\lesssim
 \sum_i \int_{2 r_i}^{\infty} \int_{2 B_i} \br{\rho(t)L e^{-\rho(t)L}  \br{I-e^{-\rho(r_i)L}}^N b_i(y)}^2 d\mu(y) \frac{dt}{t}.
\end{align*}
For the second inequality, note that for every $i$, $x \in M\setminus \cup_j 4 B_j$ means $x \notin 4B_i $. Then $y \in 2 B_i$
and $d(x,y)<t$ imply that $t\geq 2 r_i$. Thus the integral is zero for every $i$ if $0<t<2r_i$. We obtain the third inequality by using the Fubini  
theorem and \eqref{D2}.

Then by using \eqref{off}, it follows
\begin{align*}
\Lambda_{loc}
 &\lesssim
  \sum_i \int_{2 r_i}^{\infty} \int_{2 B_i} \br{\frac{\mu^{\frac1{p_0}-\frac1{2}}(B_i)}{V^{\frac1{p_0}-\frac1{2}}(y,t)} 
\frac{V^{\frac1{p_0}-\frac1{2}}(y,t)}{\mu^{\frac1{p_0}-\frac1{2}}(B_i)} \rho(t)L e^{-\rho(t)L}  
\br{I-e^{-\rho(r_i)L}}^N b_i(y)}^2 d\mu(y) \frac{dt}{t}
\\ &\lesssim
 \sum_i \int_{2 r_i}^{\infty} \int_{2 B_i} 
\br{\frac{V^{\frac1{p_0}-\frac1{2}}(y,4r_i)}{V^{\frac1{p_0}-\frac1{2}}(y,t)} 
\frac{V^{\frac1{p_0}-\frac1{2}}(y,t)}{\mu^{\frac1{p_0}-\frac1{2}}(B_i)} 
\rho(t)L e^{-\rho(t)L} \br{I-e^{-\rho(r_i)L}}^N b_i(y)}^2 d\mu(y) \frac{dt}{t}
\\  &\lesssim
\mu^{1-\frac{2}{p_0}}(B_i) \sum_i \int_{2 r_i}^{\infty} \br{\frac{4r_i}{t}}^{\nu' \br{\frac{2}{p_0}-1}}
\norm{V^{\frac1{p_0}-\frac1{2}}(\cdot,t) \rho(t)L e^{-\rho(t)L} \br{I-e^{-\rho(r_i)L}}^N b_i}_2^2 \frac{dt}{t}
\\ &\lesssim
\mu^{1-\frac{2}{p_0}}(B_i) \sum_i \norm{\br{I-e^{-\rho(r_i)L}}^N b_i}_{p_0}^2 
\\ &\lesssim
\mu^{1-\frac{2}{p_0}}(B_i) \sum_i \norm{b_i}_{p_0}^2  
\lesssim \lambda^2 \sum_i \mu(B_i) \lesssim \lambda^{2-p_0} \int |f|^{p_0} d\mu.
\end{align*}
For the second inequality, we use the reverse doubling property \eqref{revdb}. The third inequality follows from the $L^{p_0}-L^2$ 
boundedness of the operator $V^{\frac1{p_0}-\frac1{2}}(\cdot,t) \rho(t)L e^{-\rho(t)L}$ (see Lemma \ref{BK}). Then by using the $L^{p_0}$ boundedness 
of the heat semigroup, we get the fourth inequality. 

Now for the global part. We split the integral into annuli, that is,
\begin{align*}
\Lambda _{glob} &\leq 
\int_{ M}\iint_{\Gamma (x)}\left (\sum_i \mathbbm 1 _{M\setminus 2 B_i}(y) \rho(t) L e^{-\rho(t)L }
(I-e^{-\rho(r_i)L })^N b_i(y)\right )^2 \frac{d\mu(y)}{V(x,t)}\frac{dt}{t}d\mu (x)
\\  &\leq
\int_{0}^{\infty} \int_{ M} \int_{B(y,t)} \left (\sum_i \mathbbm 1_{M\setminus 2 B_i}(y) \rho(t) L e^{-\rho(t)L}
(I-e^{-\rho(r_i)L })^N b_i(y)\right )^2 \frac{d\mu(x)}{V(x,t)}d\mu (y)\frac{dt}{t}
\\  &\leq
\int_{0}^{\infty} \int_{ M}\left (\sum_i \mathbbm 1_{M\setminus 2 B_i}(y) \rho(t) L e^{-\rho(t)L}
(I-e^{-\rho(r_i)L })^N b_i(y)\right )^2 d\mu (y)\frac{dt}{t}.
\end{align*}

In order to estimate the above $L^2$ norm, we use an argument of dualization. Take the supremum of all functions
$h(y,t)\in L^2(M\times (0,\infty), \frac{d\mu dt}{t})$ with norm $1$, then
\begin{align*}
\Lambda_{glob}^{1/2} \leq& 
\br{\int_{0}^{\infty} \int_{M} \br{\sum_i \mathbbm 1_{M\setminus 2 B_i}(y) \rho(t)L e^{-\rho(t)L}(I-e^{-\rho(r_i)L })^N b_i(y)}^2 d\mu(y)
\frac{dt}{t}}^{1/2}
\\ =&
\sup_{h} \iint_{M\times (0,\infty)} \abs{\sum_i \mathbbm 1_{M\setminus 2 B_i}(y) \rho(t)L e^{-\rho(t)L} (I-e^{-\rho(r_i)L })^N b_i(y)} 
|h(y,t)| \frac{d\mu(y) dt}{t}
\\ \leq&
\sup_{h} \sum_i \sum_{j\geq 2}\int_{0}^{\infty}\int_{ C_j(B_i)} \abs{\rho(t)L e^{-\rho(t)L} (I-e^{-\rho(r_i)L })^N b_i(y)}
|h(y,t)| \frac{d\mu(y) dt}{t}
\\ \leq&
\sup_{h} \sum_i \sum_{j\geq 2} \br{\int_{0}^{\infty}\int_{ C_j(B_i)} \abs{\rho(t)L e^{-\rho(t)L} (I-e^{-\rho(r_i)L })^N b_i(y)}^2
\frac{d\mu (y) dt}{t}}^{1/2} 
\\ &\times \br{\int_{0}^{\infty}\int_{ C_j(B_i)} |h(y,t)|^2 \frac{d\mu (y) dt}{t}}^{1/2}.
\end{align*}

Denote $I_{ij}=\br{\int_{0}^{\infty}\int_{C_j(B_i)} \abs{\rho(t)L e^{-\rho(t)L} (I-e^{-\rho(r_i)L })^N b_i(y)}^2\frac{d\mu (y)dt}{t}}^{1/2}$. 

Let $H_{t,r}(\zeta)=\rho(t) \zeta e^{-\rho(t)\zeta}(1-e^{-\rho (r)\zeta})^N$. Then 
\begin{align}\label{I}
I_{ij}=\br{\int_{0}^{\infty} \Vert H_{t,r_i}(L)b_i \Vert_{L^2(C_j(B_i))}^2 \frac{dt}{t}}^{1/2}.
\end{align}

We will estimate $\Vert H_{t,r_i}(L)b_i \Vert_{L^2(C_j(B_i))}$ by functional calculus. The notation is mainly taken from 
\cite[Section 2.2]{Au07}.

For any fixed $t$ and $r$, then $H_{t,r}$ is a holomorphic function satisfying
\[
|H_{t,r}(\zeta)| \lesssim |\zeta|^{N+1} (1+|\zeta|)^{-2(N+1)},
\]
for all $\zeta \in{\Sigma }=\{z\in \mathbb C^{\ast }:|\arg z|<\xi \}$ with any $\xi \in (0,\pi/2)$.

Since $L$ is a nonnegative self-adjoint operator, or equivalently $L$ is a bisectorial operator of type $0$, we can express 
$H_{t,r}(L )$ by functional calculus. Let $0<\theta <\omega <\xi < \pi /2$, we have
\[
H_{t,r}(L )=\int_{\Gamma _{+}} e^{-zL }\eta _{+}(z)dz+\int_{\Gamma _{-}} e^{-zL }\eta _{-}(z)dz,
\]
where $\Gamma _{\pm }$ is the half-ray $\mathbb {R}^{+}e^{\pm i(\pi /2-\theta )}$ and
\begin{eqnarray*}
\eta _{\pm }(z) = \int_{\gamma _{\pm }} e^{\zeta z} H_{t,r}(\zeta ) d\zeta,\,\,\forall z\in \Gamma _{\pm },
\end{eqnarray*}
with $\gamma _{\pm }$ being the half-ray $\mathbb {R}^{\pm}e^{\pm i\omega }$.

Then for any $z\in \Gamma _{\pm }$,
\begin{align*}
|\eta _{\pm }(z)| &= \abs{\int_{\gamma _{\pm }} e^{\zeta z} \rho(t) \zeta e^{-\rho(t)\zeta} (1-e^{-\rho (r)\zeta})^N d\zeta}
\\ &\leq 
\int_{\gamma _{\pm}} |e^{\zeta z-\rho(t)\zeta}| \rho(t) |\zeta| |1-e^{-\rho(r)\zeta}|^N |d\zeta|
\\ &\leq 
\int_{\gamma _{\pm}} e^{-c|\zeta|(|z|+\rho(t))} \rho(t) |\zeta| |1-e^{-\rho(r)\zeta}|^N |d\zeta|
\\ &\lesssim 
 \int_{0}^{\infty} e^{-cs(|z|+\rho(t))} \rho(t) \rho^N(r) s^{N+1} ds
\leq  \frac{C\rho (t) \rho^N(r)}{(|z|+\rho (t))^{N+2}}.
\end{align*}
In the second inequality, the constant $c>0$ depends on $\theta$ and $\omega$. Indeed, 
$\Re(\zeta z)=|\zeta||z| \Re{e^{\pm i(\pi/2-\theta+\omega)}}$. Since $\theta<\omega$, then $\pi/2<\pi/2-\theta+\omega<\pi$ and 
$|e^{\zeta z}|=e^{-c_1 |\zeta| |z|}$ with $c_1=-\cos(\pi/2-\theta+\omega)$. Also it is obvious to see that 
$|e^{\rho(t)\zeta}|=e^{-c_2 \rho(t) |\zeta|}$. Thus the second inequality follows. In the third inequality, let 
$\zeta=s e^{\pm i \omega}$, we have $|d\zeta|=ds$. In addition, we dominate $|1-e^{-\rho(r)\zeta}|^N$ by $(\rho(r)\zeta)^N$.

We choose $\theta$ appropriately such that $|z|\sim \Re z$ for $z\in \Gamma _{\pm }$, then for any 
$j \geq 2$ fixed,
\begin{align*}
 \norm{H_{t,r_i}(L )b_i}_{L^2(C_j(B_i))}
&\lesssim
\br{\int_{\Gamma _{+}}+\int_{\Gamma _{-}}} \norm{e^{-\Re z L }b_i}_{L^2(C_j(B_i))}
\frac{\rho(t)}{(|z|+\rho (t))^2}\frac{\rho^N (r_i)}{(|z|+\rho (t))^N}|dz|
\\ &\lesssim
 \int_{0}^{\infty} \norm{e^{-sL }b_i}_{L^2(C_j(B_i))} \frac{\rho (t)\rho^N (r_i)}{(s+\rho (t))^{N+2}}ds.
\end{align*}

Applying Lemma \ref{BK}, then
\begin{align}\label{H}
\begin{split}
\left\Vert H_{t,r_i}(L )b_i\right\Vert_{L^2(C_j(B_i))}
 &\lesssim
\frac{2^{j\nu} \Vert b_i\Vert_{p_0}}{\mu^{\frac{1}{2}-\frac{1}{p_0}}(B_i)} 
\int_{0}^\infty e^{-c\left(\frac{2^j r_i}{\rho^{-1}(s)}\right)^{\tau(s)}} \frac{\rho (t)\rho^N (r_i)}{(s+\rho (t))^{N+2}} ds
\\ &\lesssim 
\frac{ 2^{j\nu} \Vert b_i\Vert_{p_0}}{\mu^{\frac{1}{2}-\frac{1}{p_0}}(B_i)} \br{\int_{0}^{\rho (t)}+\int_{\rho (t)}^{\infty }} 
e^{-c\br{\frac{2^j r_i}{\sigma(s)}}^{\tau(s)}} \frac{\rho (t)\rho^N (r_i)}{(s+\rho (t))^{N+2}} ds
\\ &=:  
\frac{ 2^{j\nu} \Vert b_i\Vert_{p_0}}{\mu^{\frac{1}{2}-\frac{1}{p_0}}(B_i)} (H_1(t,r_i,j)+H_2(t,r_i,j)).
\end{split}
\end{align}
In the second and the third lines, $\tau(s)$ is originally defined in \eqref{off}. In fact, it should be $\tau(\rho^{-1}(s))$. Since 
$\rho^{-1}(s)$ and $s$ are unanimously larger or smaller than one, we always have $\tau(s)=\tau(\rho^{-1}(s))$.

Hence, by Minkowski inequality, we get from \eqref{I} and \eqref{H} that
\begin{align}\label{I_ij}
I_{ij} \lesssim
\frac{ 2^{j\nu} \Vert b_i\Vert_{p_0}}{\mu^{\frac{1}{2}-\frac{1}{p_0}}(B_i)} 
\br{\br{\int_{0}^{\infty} H_1^2(t,r_i,j) \frac{dt}{t}}^{1/2} + \br{\int_{0}^{\infty} H_2^2(t,r_i,j) \frac{dt}{t}}^{1/2}}.
\end{align}

It remains to estimate the two integrals $\int_{0}^{\infty} H_1^2(t,r_i,j) \frac{dt}{t}$ and $\int_{0}^{\infty} H_2^2(t,r_i,j) \frac{dt}{t}$. We claim that
\begin{align}\label{H1}
\int_{0}^{\infty} H_1^2(t,r_i,j) \frac{dt}{t},\,\int_{0}^{\infty }H_2^2(t,r_i,j)\frac{dt}{t} \lesssim 2^{-2\beta_1N j}.
\end{align}

Estimate first $\int_{0}^{\infty} H_1^2(t,r_i,j) \frac{dt}{t}$. Since $\frac{\rho (t)\rho^N (r_i)}{(s+\rho (t))^{N+2}}\leq \frac{\rho^N (r_i)}{\rho(t)^{N+1}} $, we obtain
\[
H_1(t,r_i,j) \leq \int_{0}^{\rho(t)} e^{-c\br{\frac{2^{j} r_i}{\sigma(s)}}^{\beta_2/(\beta_2-1)}} \frac{\rho^N(r_i)}{\rho^{N+1}(t)}ds
\lesssim 
e^{-c\br{\frac{2^{j} r_i}{t}}^{\beta_2/(\beta_2-1)}} \frac{\rho^N (r_i)}{\rho^N(t)}.
\]
It follows that
\begin{align*}
\int_{0}^{\infty }H_1^2(t,r_i,j)\frac{dt}{t} 
&\lesssim 
\int_{0}^{\infty}e^{-2c\br{\frac{2^{j} r_i}{t}}^{\beta_2/(\beta_2-1)}} \frac{\rho^{2N} (r_i)}{\rho^{2N}(t)} \frac{dt}{t} 
\\ &\lesssim 
 \int_{0}^{2^j r_i } \br{\frac{t}{2^{j} r_i}}^{c} \frac{\rho^{2N} (r_i)}{\rho^{2N}(t)} \frac{dt}{t}
+  \int_{2^j r_i}^{\infty} \frac{\rho^{2N} (r_i)}{\rho^{2N}(t)} \frac{dt}{t}
\\ &\lesssim \frac{\rho^{2N} (r_i)}{\rho^{2N}(2^j r_i)}
\lesssim 2^{-2\beta_1N j} 
\end{align*}
In the first inequality, we dominate the exponential term by polynomial 
one for the first integral, where $c$ in the second line is chosen to be larger than $2\beta_2 N$. 

\medskip

Now estimate $\int_{0}^{\infty }H_2^2(t,r_i,j)\frac{dt}{t}$. Write $\frac{\rho (t)\rho^N (r_i)}{(s+\rho (t))^{N+2}} \leq \frac{\rho (t) \rho^N(r_i)}{s^{N+2}}$. On the one 
hand,
\begin{align}\label{h2}
H_2(t,r_i,j) =\int_{\rho (t)}^{\infty }
e^{-c\left(\frac{2^j r_i}{\sigma(s)}\right)^{\tau(s)}} \frac{\rho (t)\rho^N (r_i)}{(s+\rho (t))^{N+2}} ds
\leq \int_{\rho(t)}^{\infty } \frac{\rho(t) \rho^N(r_i)}{s^{N+2}} ds=C\frac{\rho^N (r_i)}{\rho^N (t)}.
\end{align}
On the other hand, we also have 
\begin{align}\label{h2'}
H_2(t,r_i,j)  \lesssim 2^{-\beta_1 Nj} \frac{\rho (t)}{\rho (2^jr_i)}.
\end{align}
In fact,  
\begin{align*}
H_2(t,r_i,j) &\leq  
\int_{\rho(t)}^{\infty } e^{-c\br{\frac{2^{j} r_i}{\sigma(s)}}^{\beta_2/(\beta_2-1)}} \frac{\rho(t)\rho^N(r_i)}{s^{N+1}}\frac{ds}{s}
\\ &\lesssim
 2^{-\beta_1 Nj} \frac{\rho (t)}{\rho (2^jr_i)} \int_{\rho(t)}^{\infty} e^{-c\br{\frac{2^{j} r_i}{\sigma(s)}}^{\beta_2/(\beta_2-1)}} \frac{\rho^{N+1}(2^j r_i)}{s^{N+1}}\frac{ds}{s}
\\ &\lesssim
 2^{-\beta_1 Nj} \frac{\rho (t)}{\rho (2^jr_i)}.
\end{align*}
Now we split the integral into two parts in the same way and control them by using \eqref{h2} and \eqref{h2'} seperately. Then
\begin{align*}
\int_{0}^{\infty } H_2^2(t,r_i,j) \frac{dt}{t} &\lesssim
\int_{0}^{2^j r_i } 2^{-2\beta_1 Nj} \frac{\rho^2(t)}{\rho^2(2^j r_i)}\frac{dt}{t} 
+\int_{2^j r_i }^{\infty } \frac{\rho^{2N} (r_i)}{\rho^{2N} (t)} \frac{dt}{t}
\\ &\lesssim
2^{-2\beta_1 Nj}.
\end{align*}

\medskip

Therefore, it follows from \eqref{I_ij} and \eqref{H1} that
\begin{align}\label{Iij}
I_{ij} \lesssim \frac{ \mu^{1/2}(2^{j}B_i) \Vert b_i\Vert_{p_0}}{\mu^{1/p_0}(B_i)} 2^{-\beta_1 N j}.
\end{align}

Now for the integral $\left(\int_{0}^{\infty}\int_{ C_j(B_i)} |h(y,t)|^2 \frac{d\mu (y) dt}{t}\right)^{1/2}$. Take 
$\tilde h(y)=\int_{0}^{\infty}|h(y,t)|^2 \frac{dt}{t}$, then
\begin{align}\label{h}
\br{\int_{0}^{\infty}\int_{ C_j(B_i)} |h(y,t)|^2 \frac{d\mu (y) dt}{t}}^{1/2}
\leq \mu^{1/2}(2^{j+1}B_i) \inf_{z \in B_i} \mathcal M^{1/2} \tilde h (z),
\end{align}
where $\mathcal M$ is the Hardy-Littlewood maximal function.

Following the route for the proof of \eqref{k}, we get from  \eqref{Iij} and \eqref{h} that
\begin{align*}
\Lambda_{glob}^{1/2} 
&\lesssim
\sup_{h} \sum_i\sum_{j\geq 2} \frac{ 2^{j\nu} \Vert b_i\Vert_{p_0}}{\mu^{\frac{1}{2}-\frac{1}{p_0}}(B_i)} 
2^{-\beta_1 N j} \mu^{1/2}(2^{j+1}B_i)\inf_{z \in B_i} \mathcal M^{1/2}\tilde h (z)
\\ &\lesssim
\lambda \sup_{h} \int_M \sum_i \mathbbm 1_{B_i}(y) \mathcal M^{1/2}\tilde h(y) d\mu(y)
\\ &\lesssim
\lambda \sup_{h} \int_{\cup_i B_i} \mathcal M^{1/2} \tilde h (y) d\mu(y)
\\ &\lesssim 
\lambda \mu(\cup_i B_i)^{1/2} 
\lesssim \lambda^{1-p_0/2} \int |f|^{p_0} d\mu.
\end{align*}
Here the supremum is taken over all the functions $h$ with $\norm{h}_{L^2\br{\frac{d\mu dt}{t}}}=1$. Since $N>2\nu/\beta_1$, 
the sum $\sum_{j\geq 2} 2^{-\beta_1N j+3\nu j/2}$ converges and we get the second inequality. The fourth one is a result of Kolmogorov's 
inequality.

Thus we have shown $\Lambda _{glob} \lesssim \lambda^{2-p_0} \int |f|^{p_0} d\mu$.
\end{proof}

\medskip

\subsection{Counterexamples to $H_{L,\,S_h}^p(M) = L^p(M)$}

Before moving forward to the proof of Theorem \ref{noequiv}, let us recall the following two theorems about the 
Sobolev inequality and the Green operator.
\begin{thm}[\cite{Co90}]\label{hkSob}
Let $(M,\mu)$ be a $\sigma-$finite measure space. Let $T_t$ be a semigroup on $L^s$, $1\leq s\leq \infty$, with 
infinitesimal generator $-L$. Assume that $T_t$ is equicontinuous on $L^1$ and $L^{\infty}$. Then the following two 
conditions are equivalent:
\begin{enumerate}
\item There exists $C>0$ such that $\norm{T_t}_{1\rightarrow \infty} \leq C t^{-D/2}$, $\forall t \geq 1$.

\item $T_1$ is from $L^1$ to $L^{\infty}$ and for $q>1$, $\exists C$ such that
\begin{align}\label{sob}
\norm{f}_{p} \leq C \br{\norm{L^{\alpha/2} f}_{q}+\norm{L^{\alpha/2} f}_{p}},\,\,f\in \mathcal D (L^{\alpha/2}) ,
\end{align}
where $0<\alpha q<D$ and $\frac{1}{p}=\frac{1}{q}-\frac{\alpha}{D}$.
\end{enumerate}
\end{thm}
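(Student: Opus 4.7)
The plan is to establish the two implications $(1)\Rightarrow(2)$ and $(2)\Rightarrow(1)$ separately; this is Coulhon's original scheme in \cite{Co90}.

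For $(1)\Rightarrow(2)$: The boundedness $T_1: L^1\to L^\infty$ is immediate by taking $t=1$ in (1). For the Sobolev inequality, the starting point is the Riesz-potential representation via the semigroup,
\[
L^{-\alpha/2} g = \frac{1}{\Gamma(\alpha/2)} \int_0^\infty t^{\alpha/2-1} T_t g \,dt,
\]
applied to $g = L^{\alpha/2} f$. I would split the integral at $t = 1$. The short-time contribution $\int_0^1 t^{\alpha/2-1} T_t g\, dt$ is bounded in $L^p$ by $C\norm{g}_p$, using that $T_t$ is equicontinuous on $L^p$ for every $1 \le p \le \infty$ (an interpolation consequence of the assumed equicontinuity on $L^1$ and $L^\infty$). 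The long-time contribution $\int_1^\infty t^{\alpha/2-1} T_t g\, dt$ is handled by combining (1) with the $L^q$-equicontinuity: interpolating $\norm{T_t}_{1\to\infty} \lesssim t^{-D/2}$ with $\norm{T_t}_{q\to q} \lesssim 1$ yields $\norm{T_t}_{q\to p}\lesssim t^{-\alpha/2}$ for $t\ge 1$, since $(D/(2q))(1-q/p) = \alpha/2$ with the given Sobolev exponent relation. The resulting time-integral is logarithmically borderline, and a dyadic decomposition in time together with the extra $\norm{g}_p$ reservoir on the right of the Sobolev inequality is used to absorb the borderline block.

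For $(2)\Rightarrow(1)$: I would translate the Sobolev inequality into a Nash-type differential inequality for $u(t) = \norm{T_t f}_2^2$, with $f \in L^1 \cap L^2$. Applying (2) with $q = 2$ and interpolating with $\norm{f}_1$ via H\"older gives an estimate of the form
\[
u(t)^{1+2/D} \lesssim \norm{f}_1^{4/D}\bigl(-u'(t) + \text{remainder}\bigr),
\]
where the remainder reflects the extra $\norm{L^{\alpha/2}f}_p$ term in (2) and is only active on a bounded range of $t$. Integrating this differential inequality over $t\ge 1$ yields $u(t) \lesssim t^{-D/2}\norm{f}_1^2$, hence $\norm{T_t}_{1\to 2}^2 \lesssim t^{-D/2}$. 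Using self-adjointness (or, in the non self-adjoint case, duality together with the equicontinuity on $L^\infty$) and the semigroup decomposition $T_{2t} = T_t T_t$, one concludes $\norm{T_{2t}}_{1\to\infty}\lesssim t^{-D/2}$ for $t\ge 1$, which is (1).

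The hard part will be the $(1)\Rightarrow(2)$ direction: the long-time Riesz-potential integral produces the borderline $\int_1^\infty t^{-1}\,dt$ under the naive estimate, and closing the argument requires genuinely using the extra $\norm{L^{\alpha/2}f}_p$ term as a reservoir to absorb the borderline dyadic block. This is precisely the mechanism that separates a ``weak'' heat-kernel decay valid only for $t\ge 1$ from a global Hardy--Littlewood--Sobolev embedding (which would instead correspond to the decay $\norm{T_t}_{1\to\infty}\lesssim t^{-D/2}$ for \emph{all} $t>0$), and it is where the real technical work lies.
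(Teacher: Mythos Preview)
The paper does not prove this theorem at all: it is quoted verbatim from Coulhon's paper \cite{Co90} and is used as a black box in the proof of Theorem~\ref{noequiv}. There is therefore no ``paper's own proof'' against which to compare your proposal.

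That said, your sketch follows the standard scheme for this type of result and is broadly in the spirit of \cite{Co90}. One point deserves more care in the $(1)\Rightarrow(2)$ direction. Your interpolation gives $\norm{T_t}_{q\to p}\lesssim t^{-\alpha/2}$ for $t\ge 1$, and hence the long-time piece of the Riesz potential yields the divergent integral $\int_1^\infty t^{-1}\,dt\cdot\norm{g}_q$. You propose to ``absorb the borderline dyadic block'' into the $\norm{L^{\alpha/2}f}_p$ term, but as written this is not a mechanism: each dyadic block $\int_{2^k}^{2^{k+1}} t^{\alpha/2-1}T_t g\,dt$ contributes a fixed multiple of $\norm{g}_q$ in $L^p$, and there are infinitely many of them, while the reservoir $\norm{g}_p$ is a single finite quantity. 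The actual argument in \cite{Co90} does not proceed this way; rather, one uses the hypothesis $T_1:L^1\to L^\infty$ to bootstrap the short-time behaviour and then exploits the semigroup property more carefully (or, equivalently, proves a weak-type endpoint and interpolates). Your formulation of the difficulty is correct, but the resolution you indicate is too vague to be a proof.

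For $(2)\Rightarrow(1)$, the Nash-type differential inequality approach is the right idea, but note that invoking \eqref{sob} with $q=2$ gives control of $\norm{T_t f}_p$ in terms of $\norm{L^{\alpha/2}T_t f}_2 + \norm{L^{\alpha/2}T_t f}_p$, and the second summand does not immediately fit into a clean ODE for $u(t)=\norm{T_t f}_2^2$; you would need to explain how the $L^p$ remainder is controlled along the flow, which you only gesture at.
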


\begin{thm}\label{Gr}
Let $M$ be a complete non-compact manifold. 
Then there exists a Green's function $G(x,y)$ which is smooth on $(M\times M)\backslash D$ satisfying
\begin{align}
\Delta_x \int_M G(x,y) f(y) d\mu(y)=f(x),\,\, \forall f\in \mathcal C_0^{\infty}(M).
\end{align}
\end{thm}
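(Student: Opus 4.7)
The plan is to construct $G(x,y)$ via an exhaustion of $M$ by pre-compact smooth domains and an appropriate limit of the Dirichlet Green's functions. Fix an exhaustion $\Omega_1 \Subset \Omega_2 \Subset \cdots \Subset M$ with $\bigcup_k \Omega_k = M$ and each $\partial\Omega_k$ smooth. On each $\Omega_k$ the Dirichlet problem is solvable, so there is a unique Green's function $G_k(x,y)$ on $\Omega_k\times\Omega_k$ which is smooth off the diagonal, symmetric, positive, vanishes on the boundary, and satisfies $\Delta_x G_k(\cdot,y) = \delta_y$ in the distributional sense. A local parametrix for $\Delta$ (via the Euclidean fundamental solution in normal coordinates together with the standard Levi iteration) gives the correct singular behaviour of $G_k$ near the diagonal, uniformly in $k$.

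Next I would pass to the limit $k\to\infty$. By the maximum principle, $G_k(x,y) \leq G_{k+1}(x,y)$, so the pointwise monotone limit $\widetilde G(x,y) := \lim_k G_k(x,y) \in (0,\infty]$ exists. There are two cases. If there is a point $(x_0,y_0)$ off the diagonal where $\widetilde G(x_0,y_0) < \infty$, then by the Harnack inequality (applied away from the diagonal to the harmonic function $y \mapsto G_k(x_0,y)$ and symmetrically in $x$) the limit is finite everywhere off the diagonal and locally uniform convergence holds on compact subsets of $(M\times M)\setminus D$; standard elliptic regularity then yields smoothness of $G := \widetilde G$ and preserves the distributional identity $\Delta_x G(\cdot,y) = \delta_y$. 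If instead $\widetilde G \equiv \infty$ off the diagonal (the parabolic case), I would replace the naive limit by the normalized sequence $\widehat G_k(x,y) := G_k(x,y) - G_k(o,y)$ for a fixed reference point $o \in M$; monotonicity and Harnack again give local uniform convergence of $\widehat G_k$ to a smooth function $G$ on $(M\times M)\setminus D$ with the right distributional Laplacian (the $y$-dependent additive constant does not affect $\Delta_x$).

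Finally, to verify the integral identity for $f\in \mathcal C_0^\infty(M)$: pick $k$ so large that $\supp f \Subset \Omega_k$. By construction $\Delta_x \int_M G_k(x,y)f(y)\,d\mu(y) = f(x)$ on $\Omega_k$, and dominated/monotone convergence combined with smoothness of the limit lets one pass the Laplacian inside the integral to conclude $\Delta_x \int_M G(x,y)f(y)\,d\mu(y) = f(x)$ on all of $M$.

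The main obstacle is the dichotomy between the non-parabolic and parabolic cases: in the parabolic situation the minimal positive Green's function genuinely fails to exist, and one must argue that the subtracted sequence $\widehat G_k$ does converge, using a Harnack-type compactness argument rather than monotonicity alone. Proving locally uniform convergence of $\widehat G_k$ on compacta of $(M\times M)\setminus D$, and checking that the limit retains the correct distributional singularity on the diagonal, is the delicate step; everything else reduces to standard elliptic regularity and the exhaustion construction.
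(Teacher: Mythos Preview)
The paper does not actually prove this theorem; it simply states it and refers the reader to Li's book \cite{Li12} with the sentence ``For a proof, see for example \cite{Li12}.'' Your sketch is precisely the standard exhaustion construction (originating with Malgrange and refined by Li--Tam) that one finds in that reference, so in spirit you are reproducing the cited argument rather than diverging from it.

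One small technical remark on your parabolic-case normalization: writing $\widehat G_k(x,y) = G_k(x,y) - G_k(o,y)$ is the right idea for each fixed $y\neq o$, but as stated the subtracted term blows up when $y=o$, and the resulting limit need not be symmetric in $(x,y)$. In the actual Li--Tam argument one fixes $y$, subtracts a constant such as $\sup_{\partial B(y,1)} G_k(\cdot,y)$ (or $G_k(o,y)$ with $o\neq y$ chosen once $y$ is fixed), uses Harnack to get locally uniform convergence in $x$, and then separately argues joint smoothness in $(x,y)$ off the diagonal via elliptic estimates with parameter. Your final paragraph correctly identifies this as the delicate step; just be aware that the normalization has to be handled pointwise in $y$ rather than globally.
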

For a proof, see for example \cite{Li12}.

\medskip
We also observe that
\begin{lem}\label{LB}
Let $M$ be a Riemannian manifold satisfying the polynomial volume growth \eqref{d} and the two-sided sub-Gaussian heat kernel estimate $(HK_{2,m})$. Let $B$ be an arbitrary ball with radius $r\geq 4$. Then there exists a constant $c>0$ depending on $d$ and 
$m$ such that for all $t$ with $r^m/2 \leq t \leq r^m$, 
\[
\int_B p_t(x,y) d\mu(y) \geq c,\,\,\forall x \in B.
\]
\end{lem}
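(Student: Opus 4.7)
The plan is to reduce the lemma to a direct application of the matching lower bound in $(HK_{2,m})$ combined with the doubling volume property. The hypothesis $r\geq 4$ and the range $t\in[r^m/2, r^m]$ pin us firmly into the ``large-time, small-distance'' regime, which is exactly where the sub-Gaussian Gaussian factor becomes harmless.

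First, I would observe that since $m>2$ and $r\geq 4$, we have $t\geq r^m/2\geq 2^{2m-1}\geq 1$, so we may use the large-time version of the two-sided estimate: the lower bound in $(HK_{2,m})$ reads
\[
p_t(x,y)\gtrsim \frac{1}{V(x,t^{1/m})}\exp\!\left(-c\left(\frac{d(x,y)^m}{t}\right)^{1/(m-1)}\right).
\]
Next, for $x\in B=B(x_0,r)$ and $y\in B$, the triangle inequality gives $d(x,y)\leq 2r$, while $t\in[r^m/2,r^m]$ gives $t^{1/m}\simeq r$ and
\[
\frac{d(x,y)^m}{t}\leq \frac{(2r)^m}{r^m/2}=2^{m+1}.
\]
Hence the exponential factor is bounded below by a positive constant depending only on $m$.

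Integrating the lower bound over $y\in B$ then yields
\[
\int_B p_t(x,y)\,d\mu(y)\gtrsim \frac{V(B(x_0,r))}{V(x,t^{1/m})}.
\]
Since $t^{1/m}\simeq r$, we have $V(x,t^{1/m})\simeq V(x,r)$; and since $x\in B(x_0,r)$, the inclusion $B(x,r)\subset B(x_0,2r)$ together with \eqref{doubling} gives $V(x,r)\leq V(x_0,2r)\lesssim V(x_0,r)=V(B)$. Thus $V(B)/V(x,t^{1/m})$ is bounded below by a positive constant depending only on $d$, $m$, and the doubling constant, which is the desired inequality.

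There is no genuine obstacle here: the whole point of the lemma is to package the heat-kernel lower bound and doubling into a clean on-diagonal-in-the-ball statement, so the only care needed is to verify (i) that $t$ lies in the large-time regime where the $\beta_2=m$ estimate applies, and (ii) that $V(B(x_0,r))$ and $V(x,t^{1/m})$ are comparable uniformly in $x\in B$, which follows at once from doubling.
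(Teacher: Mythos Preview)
Your proof is correct and follows essentially the same route as the paper's: apply the lower bound in $(HK_{2,m})$, observe that $d(x,y)\le 2r$ and $t\simeq r^m$ make the exponential factor a harmless constant, and then compare $\mu(B)$ with the volume appearing in the on-diagonal factor. The only cosmetic difference is that the paper invokes the polynomial growth \eqref{d} directly (writing $V(x,t^{1/m})\simeq t^{d/m}$ and $\mu(B)\simeq r^d$), whereas you phrase the volume comparison via doubling; both are immediately available under the hypotheses.
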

\begin{proof}
Note that for any $x,y \in B$, we have $t \geq r^m/2 \geq 2r \geq d(x,y)$. Then ($H\!K_{2,m}$) yields
\begin{align*}
\int_B p_t(x,y) d\mu(y) 
&\geq \int_B \frac{c}{t^{d/m}} \exp\br{-C\br{\frac{d^m(x,y)}{t}}^{1/(m-1)}} d\mu(y)
\\ &\geq \frac{c \mu(B)}{t^{d/m}} \exp\br{-C\br{\frac{r^m}{t}}^{1/(m-1)}} 
\geq c.
\end{align*}
\end{proof}

\medskip
\begin{proof}[Proof of Theorem \ref{noequiv}]

Let $\phi_n \in \mathcal C_0^{\infty}(M)$ be a cut-off function as follows: $0 \leq \phi_n \leq 1$ and for some $x_0 \in M$,
\[
\phi_n(x)=\left\{ \begin{aligned}
&1,&x\in B(x_0,n),\\
&0,&x\in M\backslash B(x_0,2n).
\end{aligned} \right.
\]
For simplicity, we denote $B(x_0,n)$ by $B_n$.

Taking $f_n=G\phi_n$, Theorem \ref{Gr} says that $\Delta f_n=\phi_n$.

On the one hand, we apply Theorem \ref{hkSob} by choosing $T_t=e^{-t\Delta}$. Indeed, $e^{-t\Delta}$ is Markov hence bounded 
on $L^p$, equicontinuous on $L^1, L^{\infty}$ and satisfies 
\[
\norm{e^{-t\Delta}}_{1\rightarrow \infty} =\sup_{x,y \in M} p_t(x,y) \leq C t^{-D/2},
\]
where $D=2d/m>2$. Then taking $\alpha=2$ and $p>\frac{D}{D-2}$, it follows that
\begin{equation*}
\norm{f_n}_p \leq C \br{\norm{\Delta f_n}_{q}+\norm{\Delta f_n}_p},
\end{equation*}
where $\frac{1}{p}=\frac{1}{q}-\frac{\alpha}{D}$, that is, $q=\frac{Dp}{D+2p}=\frac{dp}{d+mp}$. 

Using the fact that $\Delta f_n=\phi_n$ and $\phi_n \leq \mathbbm 1_{B(x_0,2n)}$, we get
\begin{align}\label{sobeq}
\begin{split}
\norm{f_n}_p 
&\lesssim \br{\norm{\phi_n}_{\frac{dp}{d+mp}}+\norm{\phi_n}_p} 
\lesssim\br{V^{\frac{d+mp}{dp}}(x_0,2n)+V^{\frac{1}{p}}(x_0,2n)} 
\\ &\lesssim \br{n^{m+d/p}+n^{d/p}} 
\lesssim n^{m+d/p}.
\end{split}
\end{align}
In particular, $\norm{f_n}_2 \lesssim n^{m+d/2}$.

On the other hand, 
\begin{align*}
\norm{S_h f_n}_p^p 
&= \int_M \br{\iint_{\Gamma(x)} \abs{t^2 \Delta e^{-t^2 \Delta} f_n(y)}^2 \frac{d\mu(y)}{V(x,t)} \frac{dt}{t} }^{p/2} d\mu(x)
\\ &= 
\int_M \br{\iint_{\Gamma(x)} \abs{t^2 e^{-t^2 \Delta} \phi_n(y)}^2 \frac{d\mu(y)}{V(x,t)} \frac{dt}{t} }^{p/2} d\mu(x).
\end{align*}
Since $\phi_n \geq \mathbbm 1_{B_{n}} \geq 0$, it follows from the Markovian property of the heat semigroup that
\[
\norm{S_h f_n}_p^p \geq
\int_M \br{\iint_{\Gamma(x)} \abs{t^2 e^{-t^2 L} \mathbbm 1_{B_{n}} (y)}^2 \frac{d\mu(y)}{V(x,t)} \frac{dt}{t} }^{p/2} d\mu(x).
\]
By using Lemma \ref{LB}, it holds that $e^{-t^2L} \mathbbm 1_{B_{n/2}} \geq c$ if $\frac{n^{m/2}}{2} \leq t \leq n^{m/2}$. 
Then we get
\[
\norm{S_h f_n}_p^p \gtrsim
\int_{B\br{x_0, \frac{n^{m/2}}{4}}} \br{\int_{\frac{n^{m/2}}{2}}^{n^{m/2}} \int_{B(x,t) \cap B_{n/2}} \frac{t^3}{V(x,t)} d
\mu(y) dt }^{p/2} d\mu(x).
\]
Observe also that, for $t>\frac{n^{m/2}}{2}$ and $x\in B\br{x_0, \frac{n^{m/2}}{4}}$, we have $B_{n} \subset B(x,t)$ as 
long as $n$ is large enough. Then the volume growth \eqref{d} gives us a lower bound in terms of $n$. That is,
\[
\norm{S_h f_n}_p^p \gtrsim 
\int_{B\br{x_0, \frac{n^{m/2}}{4}}} \br{\int_{\frac{n^{m/2}}{2}}^{n^{m/2}} \frac{ \mu(B_n) t^3}{V(x,n^{m/2})} dt}^{p/2} 
d\mu(x)
\gtrsim n^{\frac{md}{2}(1-\frac{p}{2})} n^{mp+dp/2}.
\]

Comparing the upper bound of $\norm{f_n}_p$ in \eqref{sobeq} for $p>\frac{D}{D-2}$, we obtain
\begin{align}\label{noeq}
\norm{S_h f_n}_p \gtrsim n^{\frac{md}{2}(\frac{1}{p}-\frac{1}{2})+m+\frac{d}{2}} 
\gtrsim n^{d\br{\frac{m}{2}-1} \br{\frac{1}{p}-\frac{1}{2}}} \norm{f_n}_p,
\end{align}
where $p>\frac{D}{D-2}$.

Assume $D>4$, i.e. $m<d/2$, we have $\frac{D}{D-2}<2$. Then for $\frac{D}{D-2}<p<2$, since $m>2$, 
\[
n^{d\br{\frac{m}{2}-1} \br{\frac{1}{p}-\frac{1}{2}}} \rightarrow \infty \text{   as  } n\rightarrow \infty.
\]
Thus (\ref{noeq}) implies that $L^p \subset H_{S_h}^p$ is not true for $p\in \br{\frac{D}{D-2},2}$, i.e. $p \in \br{\frac{d}{d-
m},2}$, where $2<m < d/2$.

Our conclusion is: for any fixed $p \in \br{\frac{d}{d-m},2}$, according to \eqref{sobeq} and \eqref{noeq}, there exists a family of 
functions $\left\{g_n=\frac{f_n}{n^{m+d/p}}\right\}_{n\geq 1}$ such that $\norm{g_n}_p \leq C$, 
$\norm{g_n}_2 \leq n^{\frac{d}{2}-\frac{d}{p}} \to 0$ and $\norm{S_h g_n}_p \geq n^{d(\frac{m}{2}-1)(\frac{1}{p}-\frac{1}{2})}\to+\infty$ 
as $n$ goes to infinity. Therefore $S_h$ is not $L^p$ bounded for $p \in \br{\frac{d}{d-m},2}$ and the inclusion 
$L^p \subset H_{S_h^{m'}}^p$ doesn't hold for $p \in \br{\frac{d}{d-m},2}$.
\end{proof}

\medskip
More generally, a slight adaption of Theorem \ref{noequiv} plus Theorem \ref{equihl} yields the following result. 
\begin{cor}\label{neq-gen}
Let $M$ be a Riemannian manifold satisfying \eqref{d} and ($H\!K_{2,m}$) as above. Let $p\in \br{\frac{d}{d-m}, 2}$.
Then for any $0<m'\leq m$, $L^p(M) =H_{S_h^{m'}}^p(M)$ if and only if $m'=m$.
\end{cor}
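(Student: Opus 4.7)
The plan is to prove the two directions of the biconditional separately: the direction $m'=m\Rightarrow L^p=H_{S_h^{m'}}^p$ follows directly from Theorem \ref{equihl}, while the converse $m'<m\Rightarrow L^p\neq H_{S_h^{m'}}^p$ is obtained by adapting the counterexample in the proof of Theorem \ref{noequiv} from scale $2$ to scale $m'$.

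For $m'=m$, the scaling $\rho$ defining $S_h^{m'}=S_h^m$ is exactly the mixed one associated with $(H\!K_{2,m})$. The two-sided heat kernel bound gives the pointwise estimate \eqref{ue}, and integration of $p_{\rho(t)}$ immediately yields the $L^1$--$L^\infty$ off-diagonal estimate, so \eqref{DG'} holds with $p_0=1$ (and $p_0'=\infty$). Theorem \ref{equihl} then identifies $L^p(M)=H_{S_h^m}^p(M)$ for all $1<p<\infty$, in particular on the range $p\in\br{\frac{d}{d-m},2}$.

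For $0<m'<m$, I keep the test family $f_n=G\phi_n$ constructed in the proof of Theorem \ref{noequiv}. The Sobolev upper bound
\[
\norm{f_n}_p\lesssim n^{m+d/p}
\]
from \eqref{sobeq} depends only on the heat semigroup via Theorem \ref{hkSob}, so it applies verbatim. For the lower bound on $\norm{S_h^{m'}f_n}_p$ I match the heat time $t^{m'}$ to the scale at which Lemma \ref{LB} supplies a Markovian lower bound: requiring $t^{m'}\in[n^m/2,n^m]$ forces $t\simeq n^{m/m'}$, and on this interval $e^{-t^{m'}\Delta}\phi_n\geq c$ on $B_{n/2}$. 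Repeating the computation of Theorem \ref{noequiv} with every occurrence of $2$ replaced by $m'$ (noting that $B_n\subset B(x,t)$ for $x\in B(x_0,n^{m/m'}/4)$ and $n$ large, and using the volume growth $V(x,n^{m/m'})\simeq n^{dm/m'}$) produces
\[
\norm{S_h^{m'}f_n}_p\gtrsim n^{\frac{dm}{m'p}+m+\frac{d}{2}-\frac{dm}{2m'}}.
\]
Dividing by $\norm{f_n}_p$ gives
\[
\frac{\norm{S_h^{m'}f_n}_p}{\norm{f_n}_p}\gtrsim n^{\frac{d(m-m')}{m'}\br{\frac{1}{p}-\frac{1}{2}}},
\]
which tends to $+\infty$ as $n\to\infty$ since $m'<m$ and $p<2$; hence $S_h^{m'}$ is not $L^p$-bounded and the inclusion $L^p(M)\subset H_{S_h^{m'}}^p(M)$ fails.

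The main obstacle is bookkeeping around the mixed local/global scaling of $\rho$: for small $t$ the local regime $\rho(t)=t^2$ governs rather than $t^{m'}$, but the heat times $t^{m'}\simeq n^m$ relevant to the lower bound all lie in the global regime once $n$ is large, so the local part plays no role and the argument of Theorem \ref{noequiv} transfers cleanly. The range restriction $p\in\br{\frac{d}{d-m},2}$ is dictated purely by the Sobolev embedding (Theorem \ref{hkSob} with $\alpha=2$ and $D=2d/m$), which depends only on the heat kernel and not on $m'$, so the same range works for every admissible $m'$.
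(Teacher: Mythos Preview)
Your proposal is correct and follows the same approach as the paper's own (very terse) proof: the $m'=m$ direction is Theorem \ref{equihl}, and for $m'<m$ you carry out precisely the ``slight adjustment'' of the proof of Theorem \ref{noequiv} that the paper alludes to, replacing the scaling exponent $2$ by $m'$ and obtaining the exponent $\frac{d(m-m')}{m'}\bigl(\frac1p-\frac12\bigr)$, which specialises to the paper's exponent $d\bigl(\frac m2-1\bigr)\bigl(\frac1p-\frac12\bigr)$ when $m'=2$. Your remark that the relevant heat times $t^{m'}\simeq n^m$ lie in the global regime for large $n$, so the local part of $\rho$ is irrelevant to the lower bound, is exactly the point needed to justify the transfer.
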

\begin{proof}
If $m'=m$, Theorem \ref{equihl} says that $L^p \subset H_{S_h^{m}}^p$.

Conversely, by doing a slight adjustment for the above proof, we can show that $L^p \subset H_{S_h^{m'}}^p$ is false for 
$p \in \br{\frac{d}{d-m},2}$, where $2<m < d/2$ and $m'<m$.
\end{proof}
\bigskip

\section{The $H^1-L^1$ boundedness of Riesz transforms on fractal manifolds}
This section is devoted to an application of the Hardy space theory we introduced above. 

Let $(M,d,\mu)$ be a Riemannian manifold satisfying the doubling volume property ($D$) and the sub-Gaussian estimate 
$(U\!E_{2,m})$. Note that we could as well consider a metric measure Dirichlet space which admits a ``carr\'e du champ'' (see, for example,
\cite{BE85,GSC11}).

Recall that the Riesz transform $\nabla \Delta^{-1/2}$ is of weak type $(1,1)$ on $M$:
\begin{thm}[\cite{CCFR15}]
Let $M$ be a manifold satisfying the doubling volume property \eqref{doubling} and the heat kernel estimate $(U\!E_{2,m})$, 
$m>2$. Then, the Riesz transform is weak $(1,1)$ bounded and bounded on $L^p$ for $1<p\leq 2$.
\end{thm}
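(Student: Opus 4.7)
My plan is to combine the $L^{2}$ boundedness of the Riesz transform (which comes for free on a Riemannian manifold via the identity $\|\nabla\Delta^{-1/2}f\|_{2}^{2}=\langle \Delta^{1/2}f,\Delta^{-1/2}\Delta f\rangle=\|f\|_{2}^{2}$) with a Calder\'on--Zygmund decomposition adapted to the sub-Gaussian scaling $\rho(t)=t^{2}$ for $t<1$, $\rho(t)=t^{m}$ for $t\ge1$. Given $f\in L^{1}\cap L^{2}$ and $\lambda>0$, apply Theorem \ref{C-Z} with $q=1$ to get $f=g+\sum_{i}b_{i}$ with $\supp b_{i}\subset B_{i}=B(x_{i},r_{i})$, $\int |b_{i}|\lesssim \lambda\mu(B_{i})$, $\sum_{i}\mu(B_{i})\lesssim \lambda^{-1}\|f\|_{1}$, and bounded overlap of the $B_{i}$. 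The good part is handled by Tchebychev and $L^{2}$ boundedness: $\mu(\{|Rg|>\lambda/2\})\lesssim \lambda^{-2}\|g\|_{2}^{2}\lesssim \lambda^{-1}\|f\|_{1}$.

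For the bad part I would use the subordination formula
\[
R=\nabla\Delta^{-1/2}=\frac{1}{\sqrt{\pi}}\int_{0}^{\infty}\nabla e^{-s\Delta}\frac{ds}{\sqrt{s}},
\]
and control $\mu(\{x\in M\setminus \bigcup 4B_{i}:|\sum_{i}Rb_{i}(x)|>\lambda/2\})$ by Tchebychev with exponent $2$, reducing to an $L^{2}$ estimate of $\sum_{i}Rb_{i}$ away from $\bigcup 4B_{i}$. Next, split each $Rb_{i}$ at the natural scale $\rho(r_{i})$, writing
\[
Rb_{i}=c\int_{0}^{\rho(r_{i})}\nabla e^{-s\Delta}b_{i}\frac{ds}{\sqrt{s}}+c\int_{\rho(r_{i})}^{\infty}\nabla e^{-s\Delta}b_{i}\frac{ds}{\sqrt{s}}.
\]
Decompose the target space $M\setminus 4B_{i}$ into annuli $C_{j}(B_{i})$, so that on each annulus the sub-Gaussian off-diagonal estimate from Lemma \ref{BK} yields the small factor $\exp(-c(2^{j}r_{i}/\sigma(s))^{\tau(s)})$.

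The main obstacle is that in the sub-Gaussian setting no useful pointwise bound on $\nabla_{x}p_{t}(x,y)$ is available; the proof therefore must use an \emph{integrated} gradient estimate, which is exactly the tool highlighted in \cite{CCFR15}. The key input is a Caccioppoli-type inequality combined with $(U\!E_{2,m})$ which gives, for any ball $B\subset M$ of radius $r$ and any $s>0$,
\[
\bigl\|\nabla e^{-s\Delta}\mathbbm{1}_{B}f\bigr\|_{L^{2}(C_{j}(B))}\lesssim \frac{1}{\sqrt{s}\,\mu(B)^{1/2}}\exp\!\Bigl(-c\bigl(2^{j}r/\sigma(s)\bigr)^{\tau(s)}\Bigr)\|f\|_{L^{1}(B)}\cdot\mu(B)^{1/2}\mu(B)^{-1/2},
\]
so that, after squaring and a duality argument as in the proof of Theorem \ref{equihl}, the contribution of each annulus can be summed in $j$ and the bad balls can be summed in $i$ using the finite-overlap property, giving an upper bound of order $\lambda \sum_{i}\mu(B_{i})\lesssim \|f\|_{1}$.

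Finally, collecting the good part, the local part (integrand supported near $B_{i}$, controlled via the $L^{2}$ bound on $\nabla e^{-s\Delta}$ and bounded overlap) and the global part (annular estimate just described) yields
\[
\mu(\{|Rf|>\lambda\})\lesssim \lambda^{-1}\|f\|_{1},
\]
which is the weak $(1,1)$ bound; interpolation with the $L^{2}$ bound via the Marcinkiewicz theorem gives boundedness on $L^{p}$ for $1<p\le 2$. The technically hardest step is the integrated gradient estimate, which is the genuine new ingredient compared with the classical Coulhon--Duong argument that would have required a Gaussian pointwise bound on $|\nabla p_{t}|$.
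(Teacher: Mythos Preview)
Note first that the paper does not itself prove this theorem --- it is quoted from \cite{CCFR15} --- but the paper does indicate that the proof there rests on the integrated gradient estimate of Lemma~\ref{EstimateKernelCor}, and the proof of Theorem~\ref{thm2} illustrates how that estimate is deployed.

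Your overall strategy (Calder\'on--Zygmund decomposition, $L^{2}$ control of the good part, identification of the integrated gradient bound as the key new ingredient replacing the pointwise $|\nabla p_{t}|$ bound of Coulhon--Duong) is correct, and your time-splitting at $\rho(r_{i})$ is essentially the regularization $Rb_{i}=R(I-e^{-\rho(r_{i})\Delta})b_{i}+Re^{-\rho(r_{i})\Delta}b_{i}$ used in \cite{CCFR15}. The gap is in the way you propose to handle the bad part. You reduce, via Tchebychev with exponent $2$, to an $L^{2}$ bound on $\sum_{i}Rb_{i}$ outside $\bigcup 4B_{i}$, and then invoke an $L^{1}\to L^{2}$ off-diagonal estimate for $\nabla e^{-s\Delta}$ with sub-Gaussian decay, attributed to ``Caccioppoli plus $(U\!E_{2,m})$''. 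But Lemma~\ref{EstimateKernelCor} is \emph{not} such an estimate: it is the $L^{1}$-in-$x$ bound
\[
\int_{M\setminus B(y,r)}|\nabla_{x}p_{t}(x,y)|\,d\mu(x)\lesssim t^{-1/2}\exp\bigl(-c(\rho(r)/t)^{1/(m-1)}\bigr),
\]
and no $L^{1}\to L^{2}$ version with the sub-Gaussian exponent is established in the paper or in \cite{CCFR15}. Lemma~\ref{BK} concerns $(\rho(t)L)^{k}e^{-\rho(t)L}$, not $\nabla e^{-tL}$; Lemma~\ref{time derivative} is $L^{p}\to L^{p}$ for $1<p<2$, not $L^{1}\to L^{2}$. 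For the small-time piece $\int_{0}^{\rho(r_{i})}\nabla e^{-s\Delta}b_{i}\,ds/\sqrt{s}$ the functions $b_{i}$ are only in $L^{1}$, so an $L^{2}$ bound away from $B_{i}$ cannot come from $L^{2}\to L^{2}$ gradient Davies--Gaffney estimates either.

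The fix --- which is what \cite{CCFR15} actually does, and which is mirrored in the proof of Theorem~\ref{thm2} --- is to treat the two halves asymmetrically. The \emph{unregularized} part $R(I-e^{-\rho(r_{i})\Delta})b_{i}$ is estimated in $L^{1}$ away from $4B_{i}$ directly via Lemma~\ref{EstimateKernelCor}: one writes its kernel, applies Fubini, and uses the $L^{1}$-in-$x$ integrated gradient bound to get $\sum_{i}\int_{M\setminus 4B_{i}}|R(I-e^{-\rho(r_{i})\Delta})b_{i}|\lesssim\sum_{i}\|b_{i}\|_{1}$, hence Tchebychev with exponent $1$ suffices here. The \emph{regularized} part $\sum_{i}Re^{-\rho(r_{i})\Delta}b_{i}$ is where the $L^{2}$ Tchebychev argument belongs: one bounds $\|\sum_{i}e^{-\rho(r_{i})\Delta}b_{i}\|_{2}^{2}\lesssim\lambda\|f\|_{1}$ using $(U\!E_{2,m})$ (which does give $L^{1}\to L^{2}$ bounds for the semigroup itself) together with bounded overlap, and then applies the $L^{2}$ boundedness of $R$. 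Your plan goes through once the blanket $L^{2}$ Tchebychev is replaced by this two-track treatment.
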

 
The  proof depends on the following integrated estimate for the gradient of the heat kernel.
\begin{lem}[\cite{CCFR15}] \label{EstimateKernelCor}
Let $M$ be as above.  
Then for all $y\in M$, all $r,t>0$,
\begin{align}\label{intenew}
\int_{M\setminus B(y,r)} \left\vert \nabla_x h_t(x,y)\right\vert \,d\mu(x)\lesssim \frac 1{\sqrt{t}} \exp\left(-c\left(\frac{\rho(r)}t\right)^\frac{1}{m-1}\right),
\end{align}
where $\rho$ is defined in \eqref{rho}. 
\end{lem}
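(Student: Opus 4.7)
The plan is to reduce from $L^1$ to $L^2$ by splitting $M\setminus B(y,r)$ into dyadic annuli and applying Cauchy--Schwarz, and then to control $\nabla_x h_t(\cdot,y)$ in $L^2$ via an off-diagonal estimate with the correct sub-Gaussian decay. Writing $M\setminus B(y,r)=\bigsqcup_{i\geq 0} C_i$ with $C_i=B(y,2^{i+1}r)\setminus B(y,2^i r)$, Cauchy--Schwarz on each annulus gives
\[
\int_{C_i}|\nabla_x h_t(x,y)|\,d\mu(x)\leq V^{1/2}(y,2^{i+1}r)\br{\int_{C_i}|\nabla_x h_t(x,y)|^2\,d\mu(x)}^{1/2}.
\]

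The crux of the argument is then the weighted $L^2$ gradient bound
\[
\int_{d(x,y)\geq s}|\nabla_x h_t(x,y)|^2\,d\mu(x)\lesssim\frac{1}{t\,V(y,\rho^{-1}(t))}\exp\br{-c\br{\frac{\rho(s)}{t}}^{1/(m-1)}}.
\]
The on-diagonal version ($s=0$) follows from the energy identity $\int|\nabla_x h_t|^2\,d\mu=-\tfrac12\partial_t h_{2t}(y,y)$ combined with the on-diagonal bound $h_{2t}(y,y)\lesssim V(y,\rho^{-1}(t))^{-1}$ coming from $(U\!E_{2,m})$. To upgrade to the off-diagonal version I would apply Davies' exponential perturbation method adapted to $\rho$: for a Lipschitz weight $\psi$ with $|\nabla\psi|\leq\alpha$, testing the heat equation against $h_t e^{2\psi}$ produces a Gronwall-type inequality whose integration controls $\int|\nabla_x h_t|^2 e^{2\psi}\,d\mu$. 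A linear choice of $\psi$ with optimization of $\alpha$ gives Gaussian decay, while to reach the sub-Gaussian exponent $1/(m-1)$ at large scales one uses a truncated Lipschitz weight of Hebisch--Saloff-Coste / Grigor'yan type, whose optimization yields $\exp(-c(\rho(s)/t)^{1/(m-1)})$ uniformly in the two regimes encoded by $\rho$.

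With the off-diagonal $L^2$ estimate in hand, volume doubling $V(y,2^{i+1}r)\lesssim(2^{i+1}r/\rho^{-1}(t))^{\nu}V(y,\rho^{-1}(t))$ converts the Cauchy--Schwarz bound to
\[
\int_{C_i}|\nabla_x h_t(x,y)|\,d\mu(x)\lesssim\frac{1}{\sqrt t}\br{\frac{2^{i+1}r}{\rho^{-1}(t)}}^{\nu/2}\exp\br{-c\br{\frac{\rho(2^i r)}{t}}^{1/(m-1)}}.
\]
The polynomial prefactor is absorbed by the exponential at the cost of a smaller $c$, and since $\rho(2^i r)/\rho(r)$ grows geometrically in $i$ (at rate $2^{i\beta_1}$ or $2^{i\beta_2}$ according to whether $2^i r<1$ or $\geq 1$), the sum in $i\geq 0$ is dominated by its first term, producing the claimed bound.

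The main obstacle is the weighted $L^2$ gradient estimate with the correct sub-Gaussian exponent uniformly across the transition $\rho^{-1}(t)\sim 1$. Davies' classical linear-weight perturbation yields only $\exp(-c s^2/t)$ decay, which is the wrong rate at large scales; obtaining the exponent $1/(m-1)$ requires a nonlinear (truncated) weight carefully matched to $\rho$, and one must verify that no loss in the exponent occurs at the transition between the Gaussian and sub-Gaussian regimes.
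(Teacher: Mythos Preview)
The paper does not prove this lemma; it is quoted from \cite{CCFR15}. The only hint given here is the remark following Lemma~\ref{time derivative}, which says the related $L^p$ off-diagonal gradient bound ``can be obtained by using Stein's approach, similarly as the proof of Lemma~\ref{EstimateKernelCor}''.

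Your plan has a genuine gap at its central step, the weighted $L^2$ gradient estimate with sub-Gaussian decay. Davies' perturbation with any weight $\psi$ satisfying $|\nabla\psi|\leq\alpha$ produces the differential inequality
\[
\partial_t\!\int h_t^2 e^{2\psi}\,d\mu \;\leq\; C\alpha^2\!\int h_t^2 e^{2\psi}\,d\mu \;-\; \int|\nabla h_t|^2 e^{2\psi}\,d\mu,
\]
and the $\alpha^2$ term forces Gaussian decay after optimization, regardless of whether $\psi$ is truncated. Sub-Gaussian heat kernel upper bounds are not obtained from Davies' method by choosing a cleverer weight; they require an additional scale-dependent input (mean exit-time bounds, Faber--Krahn at scale $r$, or a cutoff Sobolev inequality, as in Grigor'yan--Telcs or Barlow--Bass), and there is no automatic transfer of those mechanisms to weighted gradient bounds. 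Concretely, in the regime $t\geq 1$ with $t^{1/m}\ll r\ll t$ the Gaussian factor $e^{-cr^2/t}$ that Davies gives is strictly weaker than the target $e^{-c(r^m/t)^{1/(m-1)}}$, so the argument as written does not close.

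The approach indicated for \cite{CCFR15} is different: write $h_t(\cdot,y)=e^{-(t/2)L}h_{t/2}(\cdot,y)$ and invoke Stein's bound $\norm{\sqrt{s}\,|\nabla e^{-sL}|}_{p\to p}\leq C_p$ for $1<p\leq 2$, which holds for any symmetric sub-Markovian semigroup and is insensitive to the heat kernel geometry. The sub-Gaussian off-diagonal decay is then inherited directly from the assumed pointwise bound $(U\!E_{2,m})$ on $h_{t/2}$, rather than being manufactured from a weighted energy identity. Your annular decomposition and the final summation in $i$ are fine; it is only the source of the localized gradient bound that needs to be replaced.
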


Our aim here is to prove Theorem \ref{thm2}. More specifically, we will show that the Riesz transform is $H_{\Delta,m,mol}^1(M)-L^1(M)$ bounded. Due to Theorem \ref{H1equiv}, it is $H_{\Delta,m}^1(M)-L^1(M)$ bounded. The method we use is similar as in \cite[Theorem 3.2]{HM09}. Note that the pointwise assumption \eqref{ue} simplifies the proof below.

Note first the following lemma, which is crucial in our proof.
\begin{lem}\label{time derivative}
Let $M$ be as above and let $p\in (1,2)$. Then for any $E, F\subset M$ and for any $n\in \N$, we have
\begin{equation}\label{gradient-time}
\norm{\left|\nabla \Delta^n e^{-t\Delta} f\right|}_{L^p(F)} \lesssim
\left\{ \begin{aligned}
         &\frac{1}{t^{n+1/2}} e^{-c\frac{d^2(E,F)}{t}} \norm{f}_{L^p(E)},&0<t<1, \\
         &\frac{1}{t^{n+1/2}} e^{-c\br{\frac{d^m(E,F)}{t}}^{1/(m-1)}} \norm{f}_{L^p(E)},&t\geq 1;
         \end{aligned}\right.
\end{equation}
where $f\in L^p(M)$ is supported in $E$.
Consequently, 
\begin{align}\label{DG2}
\norm{\left|\nabla \Delta^n e^{-t\Delta} f\right|}_{L^p(F)} \lesssim
\frac{1}{t^{n+1/2}} e^{-c\br{\frac{\rho(d(E,F))}{t}}^{1/(m-1)}} \norm{f}_{L^p(E)}.
\end{align}

\end{lem}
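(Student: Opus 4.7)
The plan is to first establish the inequality for $n=0$ at the endpoints $p=1$ and $p=2$, interpolate to cover $1<p<2$, and then bootstrap to general $n$ via the semigroup property. Since $\rho(r)\simeq r^2$ for $r<1$ and $\rho(r)\simeq r^m$ for $r\geq 1$, both cases of the piecewise bound are captured by the single right-hand side $t^{-n-1/2}\exp\!\br{-c(\rho(d(E,F))/t)^{1/(m-1)}}$, i.e.\ exactly inequality \eqref{DG2}, so it suffices to prove that uniform version.

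For the base case $n=0$, $p=1$, Fubini and the assumption that $f$ is supported in $E$ give
\[
\||\nabla e^{-t\Delta} f|\|_{L^1(F)} \leq \int_E |f(y)| \int_F |\nabla_x h_t(x,y)|\,d\mu(x)\,d\mu(y),
\]
and since $F\subset M\setminus B(y,d(E,F))$ for every $y\in E$, Lemma \ref{EstimateKernelCor} bounds the inner integral by $Ct^{-1/2}\exp\!\br{-c(\rho(d(E,F))/t)^{1/(m-1)}}$. For $p=2$ I would establish a Davies--Gaffney estimate
\[
\|\mathbbm 1_F |\nabla e^{-t\Delta}|\mathbbm 1_E\|_{2\to 2} \lesssim t^{-1/2}\exp\!\br{-c\br{\frac{\rho(d(E,F))}{t}}^{1/(m-1)}}
\]
by Davies' twisted--semigroup trick: conjugate by $e^{\pm\lambda\varphi}$ for a $1$-Lipschitz $\varphi$ supported so that $\varphi\equiv 0$ on $E$ and $\varphi\geq d(E,F)$ on $F$, use the analyticity identity $\rho(t)\|\nabla e^{-\rho(t)\Delta/2}f\|_2^2 = \rho(t)\langle \Delta e^{-\rho(t)\Delta}f,f\rangle \lesssim \|f\|_2^2$, and optimize $\lambda$ against $\eqref{DG}$ to recover precisely the exponent $(\rho(d(E,F))/t)^{1/(m-1)}$. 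Riesz--Thorin interpolation between $p=1$ and $p=2$ then yields the case $n=0$, $1<p<2$.

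For general $n$, split $M=M_1\sqcup M_2$ with $M_1=\{x:d(x,E)<d(E,F)/2\}$, so that $d(M_1,F)\geq d(E,F)/2$ and $d(E,M_2)\geq d(E,F)/2$. Using $\nabla\Delta^n e^{-t\Delta} = \nabla e^{-t\Delta/2}\cdot\Delta^n e^{-t\Delta/2}$, write
\[
\nabla\Delta^n e^{-t\Delta} f = \nabla e^{-t\Delta/2}\br{\mathbbm 1_{M_1}\Delta^n e^{-t\Delta/2}f} + \nabla e^{-t\Delta/2}\br{\mathbbm 1_{M_2}\Delta^n e^{-t\Delta/2}f}.
\]
Estimate the first piece by applying the $n=0$ case between $M_1$ and $F$ at time $t/2$, together with the $L^p$-analyticity bound $\|\Delta^n e^{-t\Delta/2}\|_{p\to p}\lesssim t^{-n}$; estimate the second piece by $\|\sqrt{t}\,\nabla e^{-t\Delta/2}\|_{p\to p}\lesssim 1$ (valid for $1<p<2$ since $\nabla\Delta^{-1/2}$ is $L^p$-bounded here by the theorem quoted from \cite{CCFR15}) together with the $L^p\to L^p$ off--diagonal bound of Lemma \ref{BK} applied to $\Delta^n e^{-t\Delta/2}$ between $E$ and $M_2$. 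Each contribution picks up $\exp\!\br{-c(\rho(d(E,F)/2)/t)^{1/(m-1)}}$, which absorbs into the desired exponential after adjusting the constant $c$.

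The main obstacle is Step 2, the $L^2$ Davies--Gaffney estimate for $\nabla e^{-t\Delta}$ at the mixed Gaussian/sub-Gaussian scale $\rho$: the naive factorization $\nabla e^{-t\Delta/2}\cdot e^{-t\Delta/2}$ loses the off-diagonal decay because the gradient factor has no intrinsic spatial structure, so one must adapt Davies' perturbation argument to $(DG_{\rho})$ and verify that it produces the correct exponent $(\rho(d)/t)^{1/(m-1)}$ uniformly in both regimes, rather than only the local Gaussian one $d^2/t$.
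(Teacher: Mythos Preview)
Your overall architecture---establish the $n=0$ gradient off-diagonal estimate and then bootstrap to general $n$ via the factorization $\nabla\Delta^n e^{-t\Delta}=\nabla e^{-t\Delta/2}\cdot\Delta^n e^{-t\Delta/2}$---is the same as the paper's, which also reduces the lemma to two separate off-diagonal estimates (one for $\nabla e^{-t\Delta}$, one for $(t\Delta)^n e^{-t\Delta}$) and then composes them as in \cite[Lemma~2.3]{HM03}. Your $p=1$ endpoint via Lemma~\ref{EstimateKernelCor} and the composition step are both sound.

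The gap is precisely the step you flag as the main obstacle: Davies' perturbation at $p=2$ cannot deliver the sub-Gaussian exponent. Conjugation by $e^{\lambda\varphi}$ with $\varphi$ $1$-Lipschitz uses only that $\Delta$ is second order and gives $\|e^{\lambda\varphi}e^{-t\Delta}e^{-\lambda\varphi}\|_{2\to 2}\le e^{C\lambda^2 t}$; optimizing in $\lambda$ produces only the Gaussian exponent $d(E,F)^2/t$. For $t\ge 1$ and $t^{1/m}<d(E,F)<t$ one has $d^2/t<(d^m/t)^{1/(m-1)}$ (put $d=t^\alpha$, $1/m<\alpha<1$; the sign is that of $(\alpha-1)(m-2)$), so the Gaussian bound is strictly weaker than what the lemma asserts in that regime. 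There is no way to ``optimize $\lambda$ against \eqref{DG}'': that hypothesis governs the unperturbed semigroup and gives no improved growth bound for the twisted one. Nor does the splitting $\nabla e^{-t\Delta/2}\cdot e^{-t\Delta/2}$ help at $n=0$: on the piece where the intermediate variable sits near $E$ (hence far from $F$), all the decay must come from the gradient factor, which is the very estimate you are trying to prove.

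The paper avoids the $p=2$ endpoint entirely. It obtains the $L^p$ off-diagonal estimate for $\nabla e^{-t\Delta}$ \emph{directly} for $1<p<2$ by Stein's method, as in the proof of Lemma~\ref{EstimateKernelCor} in \cite{CCFR15}: for $f\ge 0$ one controls $|\nabla e^{-t\Delta}f|$ pointwise through $e^{-t\Delta}f$ and $\partial_t e^{-t\Delta}f$, and the pointwise bound $(U\!E_{2,m})$ together with analyticity then localizes with the correct sub-Gaussian decay. This is where the hypothesis $(U\!E_{2,m})$---not merely \eqref{DG}---enters. If you want to salvage the interpolation route, a workable substitute at $p=2$ is a Caccioppoli inequality for $u=e^{-t\Delta}f$ combined with the pointwise sub-Gaussian bounds on $h_t$ and $\partial_t h_t$; but that again feeds the geometry in through $(U\!E_{2,m})$ rather than through Davies.
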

\begin{rem}
To prove the lemma, it is enough to show that the following two estimates:
\begin{equation*}
\norm{\left|\nabla e^{-t\Delta}f\right|}_{L^p(F)} \lesssim 
\left\{ \begin{aligned}
         & e^{-c\frac{d^2(E,F)}{t}} \norm{f}_{L^p(E)},&0<t<1, \\
         & e^{-c\br{\frac{d^m(E,F)}{t}}^{1/(m-1)}} \norm{f}_{L^p(E)},&t\geq 1,
         \end{aligned}\right.
\end{equation*}
and
\begin{equation*}
\norm{(t\Delta)^n e^{-t\Delta} f}_{L^p(F)} \lesssim 
\left\{ \begin{aligned}
         & e^{-c\frac{d^2(E,F)}{t}} \norm{f}_{L^p(E)},&0<t<1, \\
         & e^{-c\br{\frac{d^m(E,F)}{t}}^{1/(m-1)}} \norm{f}_{L^p(E)},&t\geq 1.
         \end{aligned}\right.
\end{equation*}
Then \eqref{gradient-time} follows by adapting the proof of \cite[Lemma 2.3]{HM03}. Note that the first estimate can be obtained by using Stein's approach, similarly as the proof of Lemma \ref{EstimateKernelCor}. The second estimate is a direct consequence of \eqref{ue} and the analyticity of the heat semigroup (see \cite{Fe15} for its discrete analogue). We omit the details of the proof here.
\end{rem}

\begin {rem}
Note that \eqref{gradient-time} implies \eqref{DG2} (see \cite[Corollary 2.4]{CCFR15}), which may simplify the calculation in the subsequent proofs.
\end{rem}

\medskip

\begin{proof}[Proof of Theorem \ref{thm2}]

Denote by $T:=\nabla \Delta^{-1/2}$. It suffices to show that, for any $(1,2,\varepsilon)-$molecule $a$ associated to a function $b$ and a ball $B$ with radius $r_B$, there exists a constant $C$ such that $\Vert Ta\Vert_{L^1(M)}\leq C$.

Write 
\begin{align}\label{divide}
T a=T e^{-\rho(r_B)\Delta }a+T \left(I-e^{-\rho(r_B)\Delta }\right)a.
\end{align}
Then
\[
\norm{Ta}_{L^1(M)} \leq 
\norm{T \left(I-e^{-\rho(r_B)\Delta }\right)a}_{L^1(M)}+\norm{T e^{-\rho(r_B)\Delta }a}_{L^1(M)}
=: I+II.
\]

We first estimate  $I$. It holds that
\begin{align*}
I &\leq \sum_{i\geq 1} \norm{T \left(I-e^{-\rho(r_B)\Delta }\right)\mathbbm 1_{C_i(B)} a}_{L^1(M)}
\\ &\leq
\sum_{i\geq 1} \br{\norm{T \left(I-e^{-\rho(r_B)\Delta }\right)\mathbbm 1_{C_i(B)} a}_{L^1(M\backslash 2^{i+2}B)}+\norm{T \left(I-e^{-\rho(r_B)\Delta }\right)\mathbbm 1_{C_i(B)} a}_{L^1(2^{i+2}B)}}
\end{align*}
Using the Cauchy-Schwarz inequality and the $L^2$ boundedness of $T$ and $e^{-\rho(r_B)\Delta }$, it follows that
\begin{align}\label{I11}
\norm{T \left(I-e^{-\rho(r_B)\Delta }\right)\mathbbm 1_{C_i(B)} a}_{L^1(2^{i+2}B)}
 \lesssim V(2^{i+2}B) \norm{a}_{L^2(C_i(B))} \lesssim 2^{-i\varepsilon}.
\end{align}
Now we claim:
\begin{align}\label{I12}
\norm{T \left(I-e^{-\rho(r_B)\Delta }\right)\mathbbm 1_{C_i(B)} a}_{L^1(M\backslash 2^{i+2}B)}
 \lesssim 2^{-i\varepsilon}.
\end{align}
Combining \eqref{I11} and \eqref{I12}, we obtain that $I$ is bounded.

In order to prove $\eqref{I12}$, we adapt the trick in \cite{CCFR15}. For the sake of completeness, we write it down. First note that the spectral theorem gives us $\Delta ^{-1/2}f=c\int_0^\infty e^{-s\Delta}f\frac{ds}{\sqrt s}$. 
Therefore, 
\[\begin{split} 
\Delta^{-1/2} (I-e^{-t\Delta}) a &=c\int_0^\infty (e^{-s\Delta} - e^{-(s+\rho(r_B))\Delta})a\frac{ds}{\sqrt s} \\
& = c\int_{0}^\infty \left( \frac{1}{\sqrt s} - \frac{\chi_{\{s>\rho(r_B)\}}}{\sqrt{s-\rho(r_B)}}\right) e^{-s\Delta} a\, ds.
\end{split} \]

Set 
$$
k_{\rho(r_B)}(x,y) = \int_0^\infty \abs{\frac{1}{\sqrt s} - \frac{\chi_{\{s>\rho(r_B)\}}}{\sqrt{s-\rho(r_B)}}} |\nabla_x  h_s(x,y)| ds.
$$
Then
\begin{align*}
\norm{T \br{I-e^{-\rho(r_B)\Delta }}\mathbbm 1_{C_i(B)} a}_{L^1(M\backslash 2^{i+2}B)}
&\lesssim  
\int_{M\backslash 2^{i+2}B)}\int_{C_i(B)}k_{\rho(r_B)}(x,y)|a(y)|d\mu(y)d\mu(x)
\\&\lesssim 
\int_{C_i(B)} |a(y)|\int_{d(x,y)\geq 2^i r}k_{\rho(r_B)}(x,y)d\mu(x)d\mu(y).
\end{align*}
It remains to show that $\int_{d(x,y)\geq 2^i r}k_{\rho(r_B)}(x,y)d\mu(x)$ converges uniformly. Indeed, Lemma \ref{EstimateKernelCor} yields
\begin{align*}
\int_{d(x,y)\geq 2^i r}k_{\rho(r_B)}(x,y)d\mu(x) 
&=
\int_0^\infty \abs{\frac{1}{\sqrt s} - \frac{\chi_{\{s>\rho(r_B)\}}}{\sqrt{s-\rho(r_B)}}}\int_{d(x,y)\geq 2^i r} |\nabla_x  h_s(x,y)| d\mu(x) ds
\\ &\lesssim
\int_0^\infty \abs{\frac{1}{\sqrt s} - \frac{\chi_{\{s>\rho(r_B)\}}}{\sqrt{s-\rho(r_B)}}} \frac 1{\sqrt{s}} \exp\br{-c\br{\frac{\rho(2^ir)}s}^\frac{1}{m-1}}ds
\\ &\lesssim 1.
\end{align*}

Now turn to estimate $II$. We have 
\begin{align*}
II &= \norm{c\int_0^\infty \nabla e^{-(s+\rho(r_B))\Delta}a\frac{ds}{\sqrt s}}_{L^1(M)}
\\ &\lesssim 
\int_0^{\rho (r_B)} \norm{\left|\nabla e^{-(s+\rho(r_B))\Delta} a\right|}_{L^1(M)} \frac{ds}{\sqrt s}
+
\int_{\rho (r_B)}^{\infty} \norm{\left|\nabla e^{-(s+\rho(r_B))\Delta} \Delta^K b\right|}_{L^1(M)} \frac{ds}{\sqrt s}
\\ &=: 
II_1+II_2.
\end{align*}

We estimate $II_1$ as follows:
\begin{equation*}
II_1 \leq 
\sum_{i\geq 1} \int_0^{\rho (r_B)} \br{\norm{\left|\nabla e^{-(s+\rho(r_B))\Delta} \mathbbm 1_{C_i(B)}a\right|}_{L^1(2^{i+2}B)} 
+ \norm{\left|\nabla e^{-(s+\rho(r_B))\Delta} \mathbbm 1_{C_i(B)}a\right|}_{L^1(M\backslash 2^{i+2}B)}} \frac{ds}{\sqrt s}.
\end{equation*}
Estimate the first term inside the sum by Cauchy-Schwarz and the fact that $\norm{e^{-t\Delta}}_{2\to 2} \lesssim \frac{1}{\sqrt t}$. Then
\begin{equation*}
\begin{split}
\int_0^{\rho (r_B)} \norm{\left|\nabla e^{-(s+\rho(r_B))\Delta} \mathbbm 1_{C_i(B)}a\right|}_{L^1(2^{i+2}B)} \frac{ds}{\sqrt s}
&\lesssim 
\int_0^{\rho (r_B)} V^{1/2}(2^{i+2}B) \norm{a}_{L^2(C_i(B))} \frac{ds}{\sqrt {s+\rho(r_B)} \sqrt s}
\\ &\lesssim 
2^{-i\varepsilon} \int_0^{\rho (r_B)}  \frac{ds}{\rho(r_B) \sqrt s} 
\\ &\lesssim 
2^{-i\varepsilon}
\end{split}
\end{equation*}
For the second term inside the sum, we use Lemma \ref{EstimateKernelCor} again. Then
\begin{equation*}
\begin{split}
&\int_0^{\rho (r_B)} \norm{\left|\nabla e^{-(s+\rho(r_B))\Delta} \mathbbm 1_{C_i(B)}a\right|}_{L^1(M\backslash 2^{i+2}B)} \frac{ds}{\sqrt s}
\\ &\lesssim 
\int_0^{\rho (r_B)} \int_{M\backslash 2^{i+2}B)} \int_{C_i(B)} \abs{\nabla p_{s+\rho(r_B)}(x,y) a(y)} d\mu(y)d\mu(x) \frac{ds}{\sqrt s}
\\ &\lesssim 
\int_0^{\rho (r_B)} \int_{C_i(B)} \int_{d(x,y)\geq 2^{i+1}B} \abs{\nabla p_{s+\rho(r_B)}(x,y)} d\mu(x) \abs{a(y)} d\mu(y) \frac{ds}{\sqrt s}
\\ &\lesssim 
\norm{a}_{L^1(C_i(B))} \int_0^{\rho (r_B)} \frac{ds}{\sqrt {s+\rho(r_B)} \sqrt s}
\\ &\lesssim 
2^{-i\varepsilon}.
\end{split}
\end{equation*}

It remains to estimate $II_2$. Using the same method as for $II_1$, then
\begin{equation*}
II_2 \leq 
\sum_{i\geq 1} \int_{\rho (r_B)}^{\infty} \br{\norm{\left|\nabla e^{-(s+\rho(r_B))\Delta}\Delta^K \mathbbm 1_{C_i(B)}b\right|}_{L^1(2^{i+2}B)} 
+ \norm{\left|\nabla e^{-(s+\rho(r_B))\Delta} \Delta^K\mathbbm 1_{C_i(B)}b\right|}_{L^1(M\backslash 2^{i+2}B)}} \frac{ds}{\sqrt s}.
\end{equation*}
For the first term inside the sum, we estimate by using Cauchy-Schwartz inequality and the spectral theory. Then
\begin{equation*}
\begin{split}
\int_{\rho (r_B)}^{\infty} \norm{\left|\nabla \Delta^K e^{-(s+\rho(r_B))\Delta} \mathbbm 1_{C_i(B)}b\right|}_{L^1(2^{i+2}B)} \frac{ds}{\sqrt s}
&\lesssim 
\int_{\rho (r_B)}^{\infty} \mu^{1/2}(2^{i+2}B) \norm{\left|\nabla \Delta^K e^{-(s+\rho(r_B))\Delta} \mathbbm 1_{C_i(B)}b\right|}_{L^2(M)} \frac{ds}{\sqrt s}
\\ &\lesssim 
\int_{\rho (r_B)}^{\infty} \mu^{1/2}(2^{i+2}B) \norm{b}_{L^2(C_i(B))} \frac{ds}{(s+\rho(r_B))^{K+1/2} \sqrt s}
\\ &\lesssim 
2^{-i\varepsilon} \rho^K (r_B) \int_{\rho (r_B)}^{\infty} \frac{ds}{s^{K+1}}
\lesssim 
2^{-i\varepsilon}.
\end{split}
\end{equation*}
For the second term inside the sum, we use Lemma \ref{time derivative}, then
\begin{align*}
&\int_{\rho (r_B)}^{\infty} \norm{\left|\nabla \Delta^K e^{-(s+\rho(r_B))\Delta} \mathbbm 1_{C_i(B)}b\right|}_{L^1(M\backslash 2^{i+2}B)} \frac{ds}{\sqrt s}
\\ &\lesssim 
\sum_{l=i+2}^{\infty} \int_{\rho (r_B)}^{\infty} \mu^{1/p'}(2^{l+1}B) \norm{\left|\nabla \Delta^K e^{-(s+\rho(r_B))\Delta} \mathbbm 1_{C_i(B)}b\right|}_{L^p(C_l(B)} \frac{ds}{\sqrt s}
\\ &\lesssim 
\sum_{l=i+2}^{\infty} \int_{\rho (r_B)}^{\infty} \mu^{1/p'}(2^{l+1}B) \exp\br{-c\br{\frac{\rho(d(C_l(B),C_i(B)))}{s+\rho(r_B)}}^{1/(m-1)}}\frac{ \norm{b}_{L^p(C_i(B)} ds}{\sqrt s \br{s+\rho(r_B)}^{K+1/2}}
\\ &\lesssim 
\sum_{l=i+2}^{\infty} 2^{-i\varepsilon} \rho^K(r_B) \br{\frac{\mu(2^{l+1}B)}{\mu(2^i B)}}^{1/p'}
\int_{\rho (r_B)}^{\infty} \exp\br{-c\br{\frac{\rho(2^l r_B)}{s+\rho(r_B)}}^{1/(m-1)}}\frac{ds}{\sqrt s \br{s+\rho(r_B)}^{K+1/2}}
\\ &\lesssim 
\sum_{l=i+2}^{\infty} 2^{-i\varepsilon} \rho^K(r_B) 2^{(l-i)\nu/p'}
\int_{\rho (r_B)}^{\infty} \br{\frac{s}{\rho(2^l r_B)}}^{c}\frac{ds}{s^{K+1}}
\\ &\lesssim 
\sum_{l=i+2}^{\infty} 2^{-i\varepsilon} \rho^K(r_B) 2^{(l-i)\nu/p'} \frac{1}{\rho^c(2^l r_B)\rho^{K-c}(r_B)}
\\ &\lesssim 
2^{-i\varepsilon}.
\end{align*}
This finishes the proof.
\end{proof}
\bigskip

{\bf Acknowledgements:} 
This work is part of the author's PhD thesis in cotutelle between the Laboratoire de Math\'ematiques, Universit\'e Paris-Sud and the 
Mathematical Science Institute, Australian National University.
The author would like to thank Pascal Auscher for suggesting this topic, and to thank Thierry Coulhon for many discussions and 
suggestions. She is also grateful to Alan McIntosh and Dorothee Frey for helpful discussions. The author was partially supported by the ANR project ``Harmonic analysis at its boundaries'' ANR-12-BS01-0013 and the Australian Research Council (ARC) grant DP130101302.

\bibliographystyle{plain}

\end{document}